\newcommand{\Mdef}[2]{\newcommand{#1}{\relax \ifmmode #2 \else $#2$\fi}}
\newcommand{\codim}{\mathrm{codim}}
\newcommand{\cok}{\mathrm{cok}}
\newcommand{\supp}{\mathrm{supp}}
\newcommand{\im}{\mathrm{im}}
\newcommand{\sm }{\wedge}
\newcommand{\tensor}{\otimes}
\newcommand{\Hom}{\mathrm{Hom}}
\newcommand{\Ext}{\mathrm{Ext}}
\Mdef{\bhom}{\mathbf{\hat{H}om}}
\Mdef{\Mod}{\mathrm{mod}}
\newcommand{\st}{\; | \;}
\newtheorem{thm}{Theorem}[section]
\newtheorem{lemma}[thm]{Lemma}
\newtheorem{prop}[thm]{Proposition}
\newtheorem{cor}[thm]{Corollary}
\theoremstyle{definition}
\newtheorem{defn}[thm]{Definition}
\newtheorem{example}[thm]{Example}
\newtheorem{remark}[thm]{Remark}
\newcommand{\qqed}{\qed \\[1ex]}
\renewenvironment{proof}[1][\hspace*{-.8ex}]{\noindent {\bf Proof #1:\;}}{\qqed}
\Mdef{\PH} {\Phi^H}
\Mdef{\PK} {\Phi^K}
\Mdef{\PL} {\Phi^L}
\Mdef{\PT} {\Phi^{\T}}
\Mdef{\ef}{E{\cF}_+}
\Mdef{\etf}{\widetilde{E}{\cF}}
\Mdef{\eg}{E{G}_+}
\Mdef{\etg}{\tilde{E}{G}}
\newcommand{\piAt}{\pi^{\cAt}}
\Mdef{\infl}{\mathrm{inf}}
\Mdef{\defl}{\mathrm{def}}
\Mdef{\res}{\mathrm{res}}
\Mdef{\ind}{\mathrm{ind}}
\Mdef{\coind}{\mathrm{coind}}
\Mdef{\univ}{\mathcal{U}}
\Mdef{\Fp}{\mathbb{F}_p}
\Mdef{\Zpinfty}{\Z /p^{\infty}}
\Mdef{\Zpadic}{\Z_p^{\wedge}}
\newcommand{\dichotomy}[2]{\left\{ \begin{array}{ll}#1\\#2 \end{array}\right.}
\newcommand{\trichotomy}[3]{\left\{ \begin{array}{ll}#1\\#2\\#3 \end{array}\right.}
\newcommand{\lra}{\longrightarrow}
\newcommand{\lla}{\longleftarrow}
\newcommand{\lr}[1]{\langle #1 \rangle}
\newcommand{\crings}{\mathbf{Rings}}
\newcommand{\Gspectra}{\mbox{$G$-{\bf spectra}}}
\newcommand{\spec}{\mathrm{Spec}}
\Mdef{\we}{\mathbf{we}}
\Mdef{\fib}{\mathbf{fib}}
\Mdef{\cof}{\mathbf{cof}}
\Mdef{\BI}{\mathcal{BI}}
\newcommand{\ann}{\mathrm{ann}}
\newcommand{\fibre}{\mathrm{fibre}}
\newcommand{\cofibre}{\mathrm{cofibre}}
\newcommand{\ilim}{\mathop{ \mathop{\mathrm{lim}} \limits_\leftarrow} \nolimits}
\newcommand{\colim}{\mathop{  \mathop{\mathrm {lim}} \limits_\rightarrow} \nolimits}
\Mdef{\B}{\mathbb{B}}
\Mdef{\C}{\mathbb{C}}
\Mdef{\D}{\mathbb{D}}
\Mdef{\E}{\mathbb{E}}
\Mdef{\T}{\mathbb{T}}
\Mdef{\F}{\mathbb{F}}
\Mdef{\G}{\mathbb{G}}
\Mdef{\I}{\mathbb{I}}
\Mdef{\N}{\mathbb{N}}
\Mdef{\Q}{\mathbb{Q}}
\Mdef{\R}{\mathbb{R}}
\Mdef{\bbS}{\mathbb{S}}
\Mdef{\Z}{\mathbb{Z}}
\Mdef{\bA}{\mathbb{A}}
\Mdef{\bB}{\mathbb{B}}
\Mdef{\bC}{\mathbb{C}}
\Mdef{\bD}{\mathbb{D}}
\Mdef{\bE}{\mathbb{E}}
\Mdef{\bF}{\mathbb{F}}
\Mdef{\bG}{\mathbb{G}}
\Mdef{\bH}{\mathbb{H}}
\Mdef{\bI}{\mathbb{I}}
\Mdef{\bJ}{\mathbb{J}}
\Mdef{\bK}{\mathbb{K}}
\Mdef{\bL}{\mathbb{L}}
\Mdef{\bM}{\mathbb{M}}
\Mdef{\bN}{\mathbb{N}}
\Mdef{\bO}{\mathbb{O}}
\Mdef{\bP}{\mathbb{P}}
\Mdef{\bQ}{\mathbb{Q}}
\Mdef{\bR}{\mathbb{R}}
\Mdef{\bS}{\mathbb{S}}
\Mdef{\bT}{\mathbb{T}}
\Mdef{\bU}{\mathbb{U}}
\Mdef{\bV}{\mathbb{V}}
\Mdef{\bW}{\mathbb{W}}
\Mdef{\bX}{\mathbb{X}}
\Mdef{\bY}{\mathbb{Y}}
\Mdef{\bZ}{\mathbb{Z}}
\Mdef{\cA}{\mathcal{A}}
\Mdef{\cB}{\mathcal{B}}
\Mdef{\cC}{\mathcal{C}}
\Mdef{\mcD}{\mathcal{D}} % Something funny about \cD.
\Mdef{\cE}{\mathcal{E}}
\Mdef{\cF}{\mathcal{F}}
\Mdef{\cG}{\mathcal{G}}
\Mdef{\mcH}{\mathcal{H}} % There's something funny about \cH: it 
\Mdef{\cI}{\mathcal{I}}
\Mdef{\cJ}{\mathcal{J}}
\Mdef{\cK}{\mathcal{K}}
\Mdef{\mcL}{\mathcal{L}}% There's something funny about \cL: it 
\Mdef{\cM}{\mathcal{M}}
\Mdef{\cN}{\mathcal{N}}
\Mdef{\cO}{\mathcal{O}}
\Mdef{\cP}{\mathcal{P}}
\Mdef{\cQ}{\mathcal{Q}}
\Mdef{\mcR}{\mathcal{R}}% There's something funny about \cR: it 
\Mdef{\cS}{\mathcal{S}}
\Mdef{\cT}{\mathcal{T}}
\Mdef{\cU}{\mathcal{U}}
\Mdef{\cV}{\mathcal{V}}
\Mdef{\cW}{\mathcal{W}}
\Mdef{\cX}{\mathcal{X}}
\Mdef{\cY}{\mathcal{Y}}
\Mdef{\cZ}{\mathcal{Z}}
\Mdef{\ca}{\mathcal{a}}
\Mdef{\ct}{\mathcal{t}}
\Mdef{\At}{\tilde{A}}
\Mdef{\Bt}{\tilde{B}}
\Mdef{\Ct}{\tilde{C}}
\Mdef{\Et}{\tilde{E}}
\Mdef{\Ht}{\tilde{H}}
\Mdef{\Kt}{\tilde{K}}
\Mdef{\Lt}{\tilde{L}}
\Mdef{\Mt}{\tilde{M}}
\Mdef{\Nt}{\tilde{N}}
\Mdef{\Pt}{\tilde{P}}
\Mdef{\tA}{\tilde{A}}
\Mdef{\tB}{\tilde{B}}
\Mdef{\tC}{\tilde{C}}
\Mdef{\tE}{\tilde{E}}
\Mdef{\tH}{\tilde{H}}
\Mdef{\tK}{\tilde{K}}
\Mdef{\tL}{\tilde{L}}
\Mdef{\tM}{\tilde{M}}
\Mdef{\tN}{\tilde{N}}
\Mdef{\tP}{\tilde{P}}
\Mdef{\ft}{\tilde{f}}
\Mdef{\xt}{\tilde{x}}
\Mdef{\yt}{\tilde{y}}
\Mdef{\Ab}{\overline{A}}
\Mdef{\Bb}{\overline{B}}
\Mdef{\Cb}{\overline{C}}
\Mdef{\Db}{\overline{D}}
\Mdef{\Eb}{\overline{E}}
\Mdef{\Fb}{\overline{F}}
\Mdef{\Gb}{\overline{G}}
\Mdef{\Hb}{\overline{H}}
\Mdef{\Ib}{\overline{I}}
\Mdef{\Jb}{\overline{J}}
\Mdef{\Kb}{\overline{K}}
\Mdef{\Lb}{\overline{L}}
\Mdef{\Mb}{\overline{M}}
\Mdef{\Nb}{\overline{N}}
\Mdef{\Ob}{\overline{O}}
\Mdef{\Pb}{\overline{P}}
\Mdef{\Qb}{\overline{Q}}
\Mdef{\Rb}{\overline{R}}
\Mdef{\Sb}{\overline{S}}
\Mdef{\Tb}{\overline{T}}
\Mdef{\Ub}{\overline{U}}
\Mdef{\Vb}{\overline{V}}
\Mdef{\Wb}{\overline{W}}
\Mdef{\Xb}{\overline{X}}
\Mdef{\Yb}{\overline{Y}}
\Mdef{\Zb}{\overline{Z}}
\Mdef{\db}{\overline{d}}
\Mdef{\hb}{\overline{h}}
\Mdef{\qb}{\overline{q}}
\Mdef{\rb}{\overline{r}}
\Mdef{\tb}{\overline{t}}
\Mdef{\ub}{\overline{u}}
\Mdef{\vb}{\overline{v}}
\Mdef{\hc}{\hat{c}}
\Mdef{\he}{\hat{e}}
\Mdef{\hf}{\hat{f}}
\Mdef{\hA}{\hat{A}}
\Mdef{\hH}{\hat{H}}
\Mdef{\hJ}{\hat{J}}
\Mdef{\hM}{\hat{M}}
\Mdef{\hP}{\hat{P}}
\Mdef{\hQ}{\hat{Q}}
\Mdef{\thetab}{\overline{\theta}}
\Mdef{\phib}{\overline{\phi}}
\Mdef{\uA}{\underline{A}}
\Mdef{\uB}{\underline{B}}
\Mdef{\uC}{\underline{C}}
\Mdef{\uD}{\underline{D}}
\Mdef{\bolda}{\mathbf{a}}
\Mdef{\boldb}{\mathbf{b}}
\Mdef{\bfD}{\mathbf{D}}
\Mdef{\fm}{\frak{m}}
\Mdef{\fp}{\frak{p}}
\Mdef{\eps}{\epsilon}
\newcommand{\cEi}{\cE^{-1}}
\newcommand{\cOcF}{\cO_{\cF}}
\newcommand{\cOcFH}{\cO_{\cF/H}}
\newcommand{\cOcFK}{\cO_{\cF/K}}
\newcommand{\cOcFL}{\cO_{\cF/L}}
\newcommand{\cOcFM}{\cO_{\cF/M}}
\newcommand{\cAt}{\cA_t}
\newcommand{\cAsft}{\cA^{sf}_t}
\newcommand{\All}{\mathrm{All}}
\newcommand{\siftyV}[1]{S^{\infty V(#1)}}
\newcommand{\efp}{E\cF_+}
\newcommand{\epp}{E\cP_+}
\newcommand{\piAts}{\pi^{\cA_t}_*}
\newcommand{\ExtAsft}{\Ext_{\cAsft(G)}}
\newcommand{\HomAt}{\Hom_{\cAt(G)}}
\newcommand{\ExtAt}{\Ext_{\cAt(G)}}
\renewcommand{\ft}{f}
\newcommand{\at}{a}
\newcommand{\att}{\tilde{a}}
\newcommand{\Vt}{V}
\newcommand{\htt}{\tilde{h}}
\newcommand{\piGs}{\pi^G_*}
\newcommand{\bbI}{\mathbb{I}}
\newcommand{\connsub}{\mathrm{ConnSub}}
\newcommand{\sub}{\mathrm{Sub}}
\newcommand{\cat}{\mathrm{Cat}}
\renewcommand{\mod}{\mbox{-mod}}
\newcommand{\Ehat}{\hat{E}}
\newcommand{\vark}{k_K}
\newcommand{\id}{\mathrm{id}}
\newcommand{\cAhatt}{\hat{\cA}_t}
\begin{document}
\title{Rational torus-equivariant stable homotopy V: the torsion Adams spectral sequence}
\author{J.P.C.Greenlees}
\address{Mathematics Institute, Zeeman Building, Coventry CV4, 7AL, UK}
\email{john.greenlees@warwick.ac.uk}
\date{}

\begin{abstract}
We provide a calculational method for rational stable
equivariant homotopy theory for a torus $G$ based on the homology of the Borel
construction on fixed points. More precisely we define an abelian torsion
model, $\cAt(G)$ of finite injective dimension, a homology theory
$\piAts(\cdot)$ taking values in $\cAt (G)$ based on the homology of
the Borel construction, and a finite Adams spectral sequence
$$\Ext_{\cAt(G)}^{*,*}(\piAts(X), \piAts(Y))\Rightarrow [X,Y]^G_*$$
for rational $G$-spectra $X$ and $Y$.
\vspace{2ex}
\begin{center}

\end{center}
\end{abstract}

\thanks{I am grateful to S.Balchin, M.Barucco, L.Pol and J. Williamson
for discussions on related projects and to EPSRC for support under EP/P031080/1.}
\maketitle

\tableofcontents

\section{Introduction}
\subsection{Context}
A wide range of methods are available for constructing models of
rational $G$-spectra and calculating with them. In various ways they
are based on assembling information from the geometric $H$-fixed
points as $H$ varies through the closed subgroups of $G$. From the
point of view of transformation groups, the most natural thing to use
as input for the contribution at $H$ 
is the homology of the Borel construction on the geometric $H$-fixed
points. We construct an abelian category $\cAt(G)$ from this data, call it the
{\em torsion model}, and prove that it gives an effective means of calculation.

 In the case of the circle group $G$ two methods are discussed in
 \cite{s1q}. The first is based on the abelian standard  model $\cA
 (G)$ and the second on the abelian torsion  model $\cAt (G)$, which is the
 rank 1 case of the model constructed here. Even in the rank 1 case, one can see the technical advantages of the standard model.
The standard model is of injective dimension
1 and monoidal, and the torsion model is of injective dimension 2 and
cannot be monoidal.   Because of this, the focus when considering other
groups has so far been on the standard model. Indeed,
the full form of the  torsion model has only been considered
previously in the case of the circle group.

Nonetheless, the torsion model also has some advantages. As described
above, 
the ingredients are very natural from the point of view of
transformation groups. In fact they are also rather natural from the
point of view of algebraic geometry, where they mirror the Cousin
complex. Partly for these reasons,  when identifying the algebraic model of a spectrum
in the standard model it is often useful to approach through the
torsion model. From the point of view of commutative algebra, 
 the standard model is based on complete and Noetherian objects and their
 localizations whilst the torsion model is based on torsion and
 Artinian objects. This often means that the vector spaces concerned
tend  to be smaller. For any of these reasons it is valuable to 
develop a torsion model. 

The purpose of the present paper is to document the abelian torsion
model $\cAt(G)$ and  its homological algebra when $G$ is a torus and to construct the Adams spectral 
sequence based on the torsion model. This is a preliminary step
towards showing that a model category closely related to differential graded objects
in $\cAt (G)$ is Quillen equivalent to the category of rational
$G$-spectra.  Ongoing work with Balchin, Pol and Williamson
considers torsion models of a similar type in the much more general context
of tensor triangulated categories. The paper \cite{Torsion1} is
concerned  with 1-dimensional Noetherian Balmer
spectrum, and a special case gives an actual torsion model in the rank
1 case. Work on higher dimensional Noetherian Balmer spectra is
underway, and it is hoped that it will provide a model which in the
particular case of rational $G$-spectra for a torus $G$ will be a model categorical
enhancement of the torsion abelian model considered here. 

\subsection{Main results}

We will define (Section \ref{sec:At}) an abelian category $\cAt(G)$ built from
torsion modules over the polynomial rings $H^*(BG/K)$ for all
subgroups $K$. The category  $\cAt(G)$ is rather easy to work with: it
has enough injectives (Section \ref{sec:algAt}), it is of  finite
injective dimension (Proposition \ref{prop:id}) and
it is straightforward to make calculations.  The category is 
 precisely designed to be the codomain of a
homology theory $\piAts : \Gspectra \lra \cAt(G)$ (Section \ref{sec:piAt}), and the main
theorem (Theorem \ref{thm:AtASS}) states that there is a finite Adams
spectral sequence 
$$\Ext_{\cAt (G)}^{*,*}(\piAts (X), \piAts (Y))\Rightarrow [X,Y]^G_*$$
strongly convergent for any rational $G$-spectra $X$ and $Y$.

\subsection{The series} This paper is Part V of a series providing 
algebraic methods for 
approaching  rational torus equivariant stable homotopy, but it does 
not depend mathematically or expositionally on Parts I-IV. 
Parts I and II \cite{tnq1,tnq2} set up and study an 
Adams Spectral Sequence based on the standard model. Part III 
\cite{tnq3} is a comparison of variants of the  models. Part IV \cite{tnq4} calculates 
the Balmer spectrum of finite spectra (it will not be 
published beyond the arXiv since results are subsumed in \cite{spcgq},
which covers all compact Lie groups).

\subsection{Notation}
The models assemble data from various subgroups, and it enormously
aids readability to have consistent and suggestive notation. The
ambient torus is $G$, and we generally let containment follow the
alphabet as in $G\supseteq H \supseteq K\supseteq L$. 
One of the features of rational $G$-spectra is that it is often convenient to
group together the data from all subgroups with the same identity
component. We often write
$\Kt$ for a subgroup with identity component $K$ and so forth. We also 
write $\cF$ for the family of finite subgroups of $G$ and $\cF/K$ for
the family of finite subgroups of $G/K$ (which is in bijection to the
set of subgroups $\Kt$ of $G$ with identity component $K$).

The piece of data corresponding to a subgroup $K$ is built from the fixed
point set on which $G/K$ acts, so we  write $V(G/K)$ and index on the
quotient group. We combine this with the above conventions, so that
$V(\cF/K)$ collects the data for $V(G/\Kt)$ for all subgroups $\Kt\in
\cF/K$.

As a general principle, abelian categories approximating rational $G$-spectra
are denoted $\cA (G)$, but with a subscript to indicate the type of
algebra to be used and a superscript to denote the geometric
isotropy. The absence of a  subscript indicates the standard model and
$\cAt(G)$ indicates the torsion model, with $\cAt^{\cK}(G)$ indicating
a torsion model for rational $G$-spectra with geometric isotropy in
$\cK$.
%When a construction exists in various different models,
%a subscript may be added to indicate the ambient category:
%for example $f_H^t(T)$ (introduced below)
%indicates a construction in the torsion model analagous to a
%construction $f_H(T)$ (not used here) in the standard model. 

\subsection{Organization}
Section \ref{sec:semifreecircle} recapitulates the rank 1 semifree
case and should sensitize the reader to issues that will
arise. Section \ref{sec:At} defines the abelian torsion model. Section
\ref{sec:piAt} describes the homology functor on $G$-spectra taking
values in $\cAt (G)$. Section \ref{sec:algAt} begins the algebraic
study of $\cAt (G)$. Section \ref{sec:inj} identifies sufficiently
many injectives, giving a description involving local cohomology of
various localized polynomial rings and residue maps between them. 
In Section \ref{sec:id} we show that for a non-trivial torus,  the injective dimension of
the torsion model is $\leq 2r$.  Section \ref{sec:indcorep} explains how to pick
out the contribution from a specific subgroup by giving a 
corepresentation theorem. This then gives us all we need to construct
a finite and strongly convergent Adams spectral sequence based on the
homology of the Borel constructions of fixed points as stated: the
pieces are assembled in Section \ref{sec:AtASS}.

\section{The circle group}
\label{sec:semifreecircle}
The special case when $G$ is the circle group (i.e., the rank $r=1$)
was covered in \cite[Chapter 6]{s1q}. Nonetheless, it will be useful to run
through the arguments, partly because the treatment is a little
condensed in \cite{s1q}, and partly to motivate the general constructions we
will need later. 

To make the algebraic structures clearer, we work here with semifree
$G$-spectra (i.e., those with geometric isotropy in $\{1, G\}$) and
their model.  Thus only 
 $G$ and the trivial group $1$ will play a role and diagrams are much
 smaller. The case of general $G$-spectra  is covered along with other 
 ranks below. The basic algebraic ingredient is a graded commutative ring $\cOcF$
together with a multiplicative set $\cE_G$ playing the role of Euler
classes of representations of $G$.
%These enter through the map of rings $\cOcF\lra  \cEi_G\cOcF$.
This will be introduced in general later in a way that
will make the choice of notation clearer, but for semifree $G$-spectra
the ring is $k[c]$ for a field $k$ and an element 
$c$ of degree $-2$,  and the multiplicative set consists of the powers
of $c$. Accordingly, in this case $\cOcF \lra \cEi_G\cOcF$ is the map  $k[c]\lra
k[c,c^{-1}]$. For brevity we write $t=k[c,c^{-1}]$ (for Tate), and we will take 
$k=\Q$ for the topological applications.

\subsection{Homological algebra}
The objects of the semifree torsion abelian model $\cAsft (G)$ are
$X=(t\tensor V \stackrel{q}\lra T)$ where   $V$  is a $k$-module, $t=k[c,
c^{-1}]$, $T$ is a torsion $k[c]$-module and $q$ is a $k[c]$-map.
In view of the adjunction $\Hom_{k[c]}(t\tensor V,
  T)=\Hom_k(V, \Hom_{k[c]}(t,T))$, it  is sometimes more convenient to
consider the equivalent adjoint form  of the torsion
abelian category, with objects  $\tilde{X}=( V \lra T^t)$ where
$T^t=\Hom_{k[c]}(t, T)$. Note that maps on the torsion parts $A^t\lra
B^t$ are required to be of the form $\theta^t$ for a map $\theta :
A\lra B$, and cokernels are $\cok^t (A^t\lra B^t)=\cok(A\lra
B)^t$, which may not be the same as the ordinary cokernel of the module map
$A^t \lra B^t$.

The most naive way to construct objects of $\cAsft(G)$ from modules is
to form $\ft_G(V)=(t\tensor V \lra 0)$ and
$\ft_1(T)=(0\lra T)$. The former is
injective and for $T\neq 0$ the latter is not injective even if $T$ is an injective
$k[c]$-module.  (There is a similar but inequivalent construction with the same name in the
standard model, but there should be no confusion since we do not
consider the standard model in this paper). 

Alternatively, we may attempt to construct right adjoints $\at_H$ to evaluation
at $H$ for the relevant subgroups $H$. The evaluation of $X$ at $G$ is the $k$-vector space $V$ and
it  is easy to see that  $\ft_G$ is right adjoint to evaluation at
$G$, so $\at_G=\ft_G$. The evaluation of $X$ at the trivial subgroup
$1$ is the torsion
$k[c]$-module $T$, and we next describe its right adjoint $\at_1$. 
This is more obvious in the adjoint form where we have
$\att_1(T)=(id: T^t\lra T^t)$. Thus $\at_1(T)=(ev: t\tensor T^t \lra
T)$. It is immediate from the adjunction that $\at_1(I)$ is injective
if $I$ is an injective torsion $\cOcF$-module.

\begin{lemma}
The injective dimension of $\cAsft(G)$ is $\leq 2$. 
\end{lemma}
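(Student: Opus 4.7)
The plan is to exhibit an explicit injective resolution of length $2$ for a general object of $\cAsft(G)$. The injectives I would use are the ones already on the table: $\at_G(V) = \ft_G(V)$ for any $k$-module $V$ (injective by the preceding discussion, being the value of a right adjoint to an exact evaluation functor on an injective), and $\at_1(I)$ for $I$ an injective torsion $k[c]$-module (injective by the adjunction remark in the excerpt). The one substantive external input is that $k[c]$ is a PID, so torsion $k[c]$-modules have injective dimension $\leq 1$; equivalently, any torsion module $T$ can be embedded in an injective torsion module $I_0$ so that the cokernel $I_1 := I_0/T$ is also an injective torsion module.

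Given $X = (t\otimes V \xrightarrow{q} T)$, I would choose such an $I_0\supseteq T$ and set $I_1 := I_0/T$, then propose the resolution
$$0 \to X \to \at_G(V)\oplus \at_1(I_0) \to \at_G(I_0^t)\oplus \at_1(I_1) \to \at_G(I_1^t) \to 0.$$
The first differential pairs the identity $V\to V$ on the $\at_G(V)$-summand with the chosen inclusion $T\hookrightarrow I_0$ on the $\at_1(I_0)$-summand (using the adjoint presentation $(I_0^t\xrightarrow{\id} I_0^t)$ of $\at_1(I_0)$, so that the relevant adjoint map is $T^t\hookrightarrow I_0^t$). The second differential is built analogously from the induced map $I_0^t\to I_1^t$ on $V$-parts and the identity $I_1\to I_1$ on torsion parts. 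Checking in each case that the two coordinates intertwine with the structure maps $V\to T^t$, resp.\ $I_0^t\to I_1^t$, is routine.

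The key point is that cokernels must be computed in $\cAsft(G)$, which means using $\cok^t = \cok(-)^t$ on the torsion-adjoint coordinate rather than the naive module cokernel. With this convention, the cokernel of the first differential has $V$-part $I_0^t$ and torsion-adjoint part $\cok^t(T^t\to I_0^t) = (I_0/T)^t = I_1^t$, so it is the object $(I_0^t\to I_1^t)$ in adjoint form, and the map from this cokernel into $\at_G(I_0^t)\oplus\at_1(I_1)$ is exactly the second differential written above. The cokernel of the second differential then has $V$-part $I_1^t$ and torsion-adjoint part $\cok^t(I_1^t\xrightarrow{\id} I_1^t) = \cok(I_1\xrightarrow{\id} I_1)^t = 0$, i.e.\ it equals $\at_G(I_1^t)$, which is injective. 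This is the crucial place where the choice of $I_1$ injective (from the PID hypothesis) is used: it forces the otherwise dangling torsion summand to vanish.

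The only real obstacle I anticipate is bookkeeping: carefully toggling between the original and adjoint descriptions of objects and morphisms, and remembering the $\cok^t$-convention on the torsion side. All the substantive work is done by the PID property of $k[c]$, which contributes one unit of injective dimension on the torsion side, and by $k$-modules being semisimple, which contributes zero on the $V$-side; the gluing of the two coordinates in $\cAsft(G)$ then adds at most one further unit, giving $\injdim \cAsft(G)\leq 2$.
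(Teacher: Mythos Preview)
Your proposal is correct and follows essentially the same route as the paper: embed $X$ into $\at_G(V)\oplus \at_1(I_0)$ using the identity on $V$ and the chosen inclusion $T\hookrightarrow I_0$, then iterate. The paper's proof is terser—it simply names the successive $V$-parts $V'$ and $V''$ without identifying them—whereas you compute them explicitly as $I_0^t$ and $I_1^t$, which is a nice bonus but not logically required.
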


\begin{proof} Choose a resolution $0\lra T \lra I \lra
  J\lra 0$ of $T$ by torsion injective $k[c]$-modules. 

With $X=(t\tensor V\lra T)$ as before,  we take the maps (i)  $X \lra  \at_1(I)$, which is $T\lra I$ at $G/1$, and
(ii) $X \lra \at_G(V)$, which is the identity at $G/G$. This gives a
monomorphism
$X\lra \at_1(I)\oplus \at_G(V)$. The cokernel $C$ is $J$ at
$G/1$, and if we suppose it is $V'$ at $G/G$ we obtain a resolution 
$$0\lra X \lra \at_1(I)\oplus \at_G(V)\lra \at_1(J)\oplus \at_G(V')\lra
\at_G(V'')\lra 0. $$
\end{proof}

To see the injective dimension is exactly 2, we need to make a
calculation. 

\begin{lemma}
We have 
$$\ExtAt^s (\ft_G(k), \at_1(T))=
\dichotomy {\Ext_{k[c]}^{1}(t,T)& s=2}
{0 & \mbox{otherwise}}$$
\end{lemma}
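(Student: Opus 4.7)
The plan is to build an explicit injective resolution of $\at_1(T)$ of length $2$ and then apply $\Hom_{\cAsft(G)}(\ft_G(k), -)$ term by term.

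First, choose a short injective resolution $0 \to T \to I^0 \to I^1 \to 0$ in the category of torsion $k[c]$-modules; this exists because that category has injective dimension one. Apply the left-exact functor $\at_1$. Using that $t$ is $k[c]$-flat (so tensoring with $t$ is exact), the induced sequence $0 \to \at_1(T) \to \at_1(I^0) \to \at_1(I^1)$ is exact in $\cAsft(G)$, and by the observation preceding the previous lemma, each $\at_1(I^i)$ is injective. Let $C$ denote the cokernel in $\cAsft(G)$ of $\at_1(I^0)\to \at_1(I^1)$.

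I would then identify $C$ explicitly. On the $T$-part the cokernel is $\cok(I^0 \to I^1) = 0$. On the $V$-part, applying $\Hom_{k[c]}(t,-)$ to the resolution and using $\Ext^1_{k[c]}(t, I^i)=0$ for $I^i$ injective gives the four-term exact sequence
$$0 \to T^t \to (I^0)^t \to (I^1)^t \to \Ext^1_{k[c]}(t, T) \to 0,$$
so the $V$-cokernel is $\Ext^1_{k[c]}(t, T)$. The induced structure map to the zero $T$-part is trivially zero, so $C \cong \ft_G(\Ext^1_{k[c]}(t, T))$. Since $\ft_G$ of any $k$-module is injective, we obtain an injective resolution
$$0 \to \at_1(T) \to \at_1(I^0) \to \at_1(I^1) \to \ft_G(\Ext^1_{k[c]}(t,T)) \to 0.$$

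Finally I would apply $\Hom_{\cAsft(G)}(\ft_G(k),-)$. Directly from the morphism description, a map $\ft_G(k) \to Y=(t \otimes V_Y \xrightarrow{q_Y} T_Y)$ is an element $v \in V_Y$ with $q_Y(c^n \otimes v)=0$ for every $n \in \Z$. For $Y = \at_1(I)$, the structure map is the evaluation $c^n \otimes \phi \mapsto \phi(c^n)$, so the condition forces $\phi=0$, giving $\Hom(\ft_G(k), \at_1(I))=0$. For $Y=\ft_G(W)$ the condition is vacuous and the adjunction $\ft_G \dashv \mathrm{ev}_G$ yields $\Hom(\ft_G(k), \ft_G(W))=W$. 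The resulting complex is
$$0 \to 0 \to 0 \to \Ext^1_{k[c]}(t,T) \to 0,$$
with $\Ext^1_{k[c]}(t,T)$ in cohomological degree $2$, whence the stated computation.

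The main obstacle is the careful bookkeeping for the cokernel in $\cAsft(G)$: on the adjoint side one must use the corrected $\cok^t$, but in the non-adjoint description used above both parts admit the ordinary cokernel, and the key identification of the $V$-cokernel as $\Ext^1_{k[c]}(t,T)$ is exactly the standard $\Hom(t,-)$ long exact sequence applied to the injective resolution of $T$.
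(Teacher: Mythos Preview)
Your argument is correct and follows the same route as the paper: construct the length-two injective resolution $0\to \at_1(T)\to \at_1(I^0)\to \at_1(I^1)\to \ft_G(\Ext^1_{k[c]}(t,T))\to 0$ and then apply $\Hom_{\cAsft(G)}(\ft_G(k),-)$ term by term. Two small slips worth fixing: left-exactness of $\at_1$ on the $V$-parts comes from left-exactness of $\Hom_{k[c]}(t,-)$ rather than flatness of $t$, and $\ft_G=\at_G$ is the \emph{right} adjoint to $\mathrm{ev}_G$, not the left (your conclusions are unaffected since the $T$-part of $\ft_G(W)$ vanishes).
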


\begin{proof}
If $T$ is a torsion $k[c]$-module with injective resolution 
$$0\lra T \lra I \lra J \lra 0$$ 
then Hom and Ext are given by the exact sequence 
$$0\lra T^t \lra I^t\lra J^t \lra \Ext_{k[c]}(t, T)\lra 0.$$
We may now write down a resolution of $\at_1(T)$:
$$0\lra \at_1(T) \lra \at_1(I)\lra \at_1(J)\lra 
\at_G(T^{\dagger t}) \lra 0$$
where $T^{\dagger t}=\Ext^1_{k[c]}(t,T)$. 
The answer is clear by applying $\HomAt(\ft_1(k), \cdot )$ since if 
$\tilde{Y}=(\tilde{q}: W\lra U^t)$ we have 
$$\HomAt(\ft_1(k), Y) =\ker (\tilde{q}: W\lra U^t) .$$
\end{proof}

One needs to think a little to identify a torsion module $T$ with 
$\Ext^1_{k[c]}(t, T)\neq 0$. However 
$$\Ext^1(t, T)= R\lim \left[ \cdots \stackrel{c} \lra 
\Sigma^{-4}T\stackrel{c}\lra \Sigma^{-2}T\stackrel{c}\lra T\right]$$
so we see $T=\bigoplus_{s\geq 1}\Sigma^{2s} k[c]/c^s$ will do. 

\begin{remark}
  \label{rem:suminjnotinj}
The sum of injectives need not be injective. Indeed, if we apply graded
vector space duality $(\cdot)^*$, 
we obtain
$$I=\left[ \bigoplus_s\Sigma^{2s}k[c]\right]^*=\prod_s\Sigma^{2s}k[c]^*,$$
we see that the map
$$\bigoplus_s \Hom (t, \Sigma^{2s}k[c]^*) \lra \Hom(t, \prod_s
\Sigma^{2s}k[c]^*)=\Hom (t, I)$$
is not an isomorphism. Choosing an element $\delta$ not in the image
(such as the map diagonal on nonzero entries in each degree) we see
that there is no solution to the problem
$$\xymatrix{0\rto &f_1(k[c]^*)\rto
  \dto^{\delta}&a_1(k[c]^*)\ar@{..>}[dl]\\
  &\bigoplus_s\Sigma^{2s}a_1(k[c]^*)&
  }$$
and hence $\bigoplus_s\Sigma^{2s}a_1(k[c]^*)$ is not injective. 
  \end{remark}

%As far as determining the injective dimension is concerned, the above 
%example is sufficient, but it may be suggestive to make a second
%calculation. 

% \begin{lemma}
% We have 
% $$\ExtAt^s (\ft_G(k), \ft_1(T))=
% \trichotomy {\Ext_{k[c]}^{1}(t,T)& s=2}
% {\Hom_{k[c]}(t,T)& s=1}
% {0 & \mbox{otherwise}}$$
% \end{lemma}

% \begin{proof}
% If $T$ is a torsion $k[c]$-module with injective resolution 
% $$0\lra T \lra I \lra J \lra 0$$ 
% then Hom and Ext are given by the exact sequence 
% $$0\lra T^t \lra I^t\lra J^t \lra \Ext_{k[c]}(t, T)\lra 0.$$
% We may now write down a resolution of $\ft_1(T)$. 
% Of course $\ft_1(T)\lra \at_1(I)$ is injective, and (in adjoint form) 
% the cokernel is  $(r: t\tensor I^t \lra J)$. The kernel of $r$ is 
% $t\tensor T^t$. It 
% is then easy to give the injective resolution 
% $$0\lra \ft_1(T) \lra \at_1(I)\lra \at_G(T^t) \oplus \at_1(J)\lra 
% \at_G(T^{*t}) \lra 0$$
% where $T^{*t}=\Ext^1_{k[c]}(t,T)$. 
% The answer is clear by applying $\HomAt(\ft_1(k), \cdot )$ since if 
% $\tilde{Y}=(\tilde{q}: W\lra U^t)$ we have 
% $$\HomAt(\ft_1(k), X) =\ker (\tilde{q}: V\lra U^t) .$$
% \end{proof}

\subsection{The Adams spectral sequence}
Writing $\mbox{sf-$G$-spectra}$ for the homotopy category of semi-free
rational $G$-spectra, we begin by defining the homology theory 
$$\piAts: \mbox{sf-$G$-spectra} \lra \cAt^{sf} (G). $$
One may view this as a distillation of the power of the cohomology
of the Borel construction $H^*(EG\times_GX)$. From here on we will consistently use
the based version $H^*_G(X)=H^*(EG\times_G X, EG\times_G pt)$. We are
exploiting the fact that $H^*_G(S^0)=H^*(BG)=\Q[c]$ acts on
$H^*_G(X)$ and $H_*(EG\times_G X, EG\times_G pt)$; by degrees we
immediatly see that  the latter is always a torsion module.

To connect the topology and algebra, we consider the isotropy separation sequence
$$\efp \lra S^0 \lra \etf \lra \Sigma \efp$$
for the family $\cF$ of finite subgroups, and for a semifree $G$-spectrum $X$ we take
$$\piAt_*(X)=\pi^G_*(\etf \sm DEG_+ \sm X \lra \Sigma \efp  \sm DEG_+ \sm X). $$
Of course we are more precisely using 
$$\pi^G_*(\etf\sm DEG_+ \sm X)\cong t \tensor \pi_*(\Phi^GX)=:t\tensor V_X$$
and, since $X$ is semifree and $DEG_+  \sm EG_+ \simeq EG_+ $, 
$$\pi^G_*(\Sigma \efp \sm DEG_+ \sm X)\cong \pi^G_*(\Sigma EG_+\sm X)\cong 
H_*^{G}(\Sigma^2 X) =: T_X$$
(where $H^G_*$ denotes homology of the Borel construction) so that 
$$V_X=H_*(\Phi^GX)$$
and 
$$T_X=\Sigma^2 H_*^G(X). $$

The functor $\piAts$ provides the $d$-invariant
$$d=\piAts: [X,Y]^G_*\lra \Hom_{\cAt(G)}(\piAts (X), \piAts(Y)). $$
We will now proceed by the usual method towards an Adams spectral
sequence. 

The first question is whether we can realise enough
injectives. We have made a start since  $\at_G(k)=\ft_G(k)=\piAts(\etf)$. Next we 
would like to realize $\at_1(k[c]^*)$. We could use Brown representability as in 
the general case below (Lemma \ref{lem:rinj}), but by way of
variation, we give here a construction in terms of 
well known objects. 

It is standard  that $\pi^G_*(EG_+)=H_*^G(\Sigma EG_+)=\Sigma k[c]^*$
and therefore (care about
suspensions) $\piAts(EG_+)=\Sigma^2 \ft_1(k[c]^*)$. 
We break $\at_1(k[c]^*)$  down using the short exact sequence
$$0\lra \ft_1(k[c]^*)\lra \at_1(k[c]^*)\lra \ft_G((k[c]^*)^t)\lra
0. $$
It is reassuring to note that $(k[c]^*)^t\cong t$, but in fact it is
more helpful to retain the functional form $(k[c]^*)^t$.  In any case, we see that $\at_1(k[c]^*)$ is the
fibre of  a map  
$$\ft_G(t)\lra \Sigma \ft_1(k[c]^*), $$
so (with care again about suspensions) the realization should be the fibre of a map 
$$\etf [t]\lra \Sigma^{-1}EG_+,  $$
where $[t]$ indicates the use of a module of graded coefficients. 
We may calculate the maps $\etf\lra EG_+$ using the  exact
sequence
$$\cdots \lra [\Sigma EG_+, EG_+]^G\lra [\etf, EG_+]^G\lra [S^0,
EG_+]^G\lra \cdots$$
A priori it is long exact, but the outer two groups are in odd
degrees, so it is short exact. Since $S^{\infty z}$ is a smash factor of
$\etf$, multiplication by $c$ is an
isomorphism in the middle we see $[\etf, \Sigma^{-1}EG_+]^G=t$. 
Indeed, we may take the map
$$t\tensor (k[c]^*)^t \lra \Sigma^{-1}k[c]^*$$
to be evaluation. Following through the isomorphisms we see
$$a_1(k[c]^*)=\piAts( \fibre (ev: \etf[t]\lra \Sigma^{-1}EG_+)). $$

\begin{thm} \cite[Theorem 6.6.2]{s1q}
For rational semifree $G$-spectra $X$ and $Y$ there is an Adams spectral sequence
$$\ExtAsft^{*,*} (\piAts (X), \piAts (Y))\Rightarrow [X,Y]^G_*. $$
This is a strongly convergent spectral sequence which collapses at $E_3$.
\end{thm}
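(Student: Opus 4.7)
The plan is to build the Adams spectral sequence by the standard recipe of realizing injective resolutions in the algebraic category by towers in the topological category, then noting that the finite injective dimension forces both strong convergence and rapid collapse.

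First I would isolate the class of injective $G$-spectra we need. The algebraic category $\cAsft(G)$ has enough injectives generated by the two families $\ft_G(V)$ and $\at_1(I)$ for $I$ an injective torsion $k[c]$-module, since the resolution constructed in the injective dimension lemma uses only these. Topologically, we have already realized $\piAts(\etf)=\ft_G(k)=\at_G(k)$ and, via the fibre sequence built from $\etf[t]\to \Sigma^{-1}EG_+$, we have realized $\at_1(k[c]^*)$. Taking arbitrary direct sums of suspensions of $k[c]^*$ (which are all the torsion injectives over $k[c]$) and then forming the corresponding fibres gives spectra $I_\alpha$ with $\piAts(I_\alpha)$ ranging over enough injectives of $\cAsft(G)$. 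A short verification shows the $d$-invariant
$$d: [X,I_\alpha]^G_*\lra \HomAsft(\piAts(X),\piAts(I_\alpha))$$
is an isomorphism for $I_\alpha$ in this class, by reducing to the defining adjunctions for $\ft_G$ and $\at_1$ and using $[\etf,\Sigma^{-1}EG_+]^G=t$ as computed above.

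Next, given $Y$, I would build an Adams tower inductively. Choose an injection $\piAts(Y)\hookrightarrow J_0$ into a realizable injective of $\cAsft(G)$, lift it via the isomorphism in the previous paragraph to a $G$-map $Y\to I_0$, and form the fibre $Y_1\to Y\to I_0$. Repeat with $Y_1$ in place of $Y$. Because the injective dimension of $\cAsft(G)$ is $\leq 2$ (by the lemma proved above), after three steps we may arrange $\piAts(Y_3)=0$, and a short additional argument (using that $\piAts$ detects trivial objects in the semifree setting after inverting the relevant equivalences) shows $Y_3\simeq *$. Thus the tower is finite of length $3$. Applying $[X,-]^G_*$ to the tower yields a strongly convergent spectral sequence whose $E_1$ term is $[X,I_s]^G_*=\HomAsft(\piAts X, J_s)$ with differential induced by $J_\bullet$, and whose $E_2$ term is therefore
$$E_2^{s,t}=\ExtAsft^{s,t}(\piAts X,\piAts Y),$$
converging to $[X,Y]^G_{t-s}$.

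Finally, collapse at $E_3$ is automatic: since the injective dimension of $\cAsft(G)$ is at most $2$, the $\Ext$ groups vanish for $s>2$, so only the columns $s=0,1,2$ are nonzero, and the $d_r$ for $r\geq 3$ all have trivial source or target. The main obstacle in the above programme is the first step, i.e.\ checking that the topological realizations $I_\alpha$ are genuinely injective in the Adams sense, namely that $d$ is an isomorphism for maps into $I_\alpha$; this requires understanding $[-, EG_+]^G$ and $[-, \etf]^G$ well enough to match the adjunction identities on the algebraic side, and is the place where the specific geometry of $\etf$ (containing $S^{\infty z}$ as a smash factor so that $c$ acts invertibly) and of $EG_+$ is genuinely used.
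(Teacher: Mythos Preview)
Your outline matches the paper's approach: realize enough injectives (the $\at_G(V)$ and $\at_1(I)$), build the Adams tower, and use injective dimension $\leq 2$ to get a finite tower, strong convergence, and collapse at $E_3$.

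The one place where your sketch diverges from the paper is the verification that $d$ is an isomorphism for the realized $\at_1(k[c]^*)$. You suggest ``reducing to the defining adjunctions for $\ft_G$ and $\at_1$'', but there is no topological adjunction waiting to hand you this; the algebraic adjunction $\HomAsft(-,\at_1(I))\cong\Hom_{k[c]}(T_{(-)},I)$ is exactly what has to be matched. The paper's route is: first observe $[\etf, A_1(k[c]^*)]^G_*=0$ (essentially because $c$ acts invertibly on the source and the target is built from $c$-torsion), which lets one replace $X$ by its free part and hence reduce to $X=G_+$; the remaining check
\[
[G_+, EG_+]^G_* \stackrel{d}\lra \Hom_{k[c]}(\pi^G_*(G_+),\pi^G_*(EG_+))
\]
is then done via the cofibre sequence $G_+\to S^0\to S^z$ and connectivity, citing \cite[Lemma 6.3]{gfreeq}. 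This is precisely the ``main obstacle'' you flag at the end, and the reduction-to-free-then-to-$G_+$ step is the missing idea in your sketch.
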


\begin{proof}
As usual we need only show that enough injectives are realizable and
that the $d$-invariant is an isomorphism when $Y$ is one of the
realizable injectives. Convergence is clear since $\piAts$ is detects
contractibility and commutes with telescopes. 

Now we need to show that 
$$[X, \at_1(k[c]^*)]^G_*\lra \HomAt (\piAts (X),  \at_1
(k[c]^*)) =\Hom_{k[c]}(T_, k[c]^*) =\Hom_{k}(T_X, k)$$
is an isomorphism,  where $X=(t\tensor V_X\lra T_X)$.
This is straightforward  since we can see immediately that $[f_G(k),
\at_1(k[c]^*)]^G_*=0$. That in turn means we may assume $X$ is free
and hence it suffices to take $X=G_+$. Now we only need to check that 
$$d: [G_+, \ft_1(k[c]^*)]^G_*=[G_+, EG_+]^G_*\lra
\Hom_{k[c]}(\pi^G_*(G_+), \pi^G_*(EG_+))$$
is an isomorphism.  The idea is to use the cofibre sequence $G_+\lra
S^0\lra S^z$ and connectivity: this is implemented in \cite[Lemma 6.3]{gfreeq}. 
\end{proof}

\section{The torsion model}
\label{sec:At}

We now begin work on the general case, so that $G$ is a torus of rank $r$.
We will write down the category $\cAt (G)$ directly, 
because this emphasizes the algebraic simplicity but many features 
will appear mysterious.  We will return to explain the form of the 
definition in Section \ref{sec:piAt}: the category is precisely 
designed as the appropriate receptacle for a torsion-based homology 
theory on rational $G$-spectra. 

\subsection{Inflation systems and Euler classes}
We begin with a diagram of rings and some localizations. For the
present we will restrict this to the context of our applications. 

Starting with our torus $G$, we consider the poset of {\em connected}
subgroups $H$ ordered by inclusion. 
We then have a diagram 
$$\cO_{\cF /}: \connsub(G)^{op}\lra \crings$$
to graded commutative rings. If $K \subseteq H$ the map $\cOcFH\lra
\cOcFK$ is called {\em inflation}, so we call $\cO_{\cF/}$ the {\em
  inflation diagram} of rings. 

For each $K\subseteq H$ we have a multiplicatively closed subset
$\cE_{H/K}\subseteq \cOcFK$, and these are compatible in the sense
that, for $H \supseteq K \supseteq L$, the inflation of $\cE_{H/K}\subseteq \cOcFK$ lies in
$\cE_{H/L}\subseteq \cOcFL$, and in fact $\cE_{H/L}$ is generated by $
\cE_{H/K}$ and $ \cE_{K/L} .$

An $\cOcFK$-module $M$ is {\em torsion} if $\cEi_{H/K}M=0$ for every
$H> K$.

There are two examples to bear in mind from equivariant homotopy
theory.  The second is the motivating example, and the default interpretation.
The notation we have used for an abstract inflation functor with Euler classes comes from
it. The occurrence of many subgroups is sometimes found to be a
distraction, and the author finds that the first example is often helpful as a 
warm-up.
 
\begin{example}
  \label{eg:simple}
The connected group inflation functor is the diagram of rings with  
$\cOcFK=H^*(BG/K)$ and $\cE_H=\{ e_1(V)\st V^H=0\}$. Here $e_1(V)\in
H^{|V|}(BG)$ is the classical Euler class. 
\end{example}

\begin{example}
\label{eg:fullisotropy}
The full isotropy topological example has 
$\cOcFK=\prod_{\Kt} H^*(BG/\Kt)$ (with the product over all subgroups
$\Kt$ with identity component $K$), and 
$\cE_H=\{ e(V)\st V^H=0\}$. Here $e(V)\in \cOcF$ has $F$-component
the Euler class $e_F(V)=e_1(V^F)\in H^{|V^F|}(BG/F)$. 

The ring $\cOcFK$ has one idempotent $e_{\Kt}$ for each subgroup ${\Kt}$, and
hence any $\cOcFK$-module $M$ has summands $e_{\Kt}M$. The nature of
the Euler classes means that any torsion module  is a sum of these
pieces:  $M=\bigoplus_{\Kt} e_{\Kt}M$.

If $K$ is a subgroup of $H$ the inflation map $\cOcFH\lra \cOcFK$
requires the observation that for each subgroup $\Kt$ with identity
component $K$ there is a unique subgroup $\Ht$ (namely $\Ht=\Kt\cdot
H$) with with identity component $H$ with $\Kt $ cotoral in $\Ht$.  
\end{example}

As indicated above,  the reader should always assume we are dealing with full
isotropy (i.e., Example \ref{eg:fullisotropy}), but may find it useful
to think about Example \ref{eg:simple} for a less cluttered introduction.

\subsection{The definition}
In the presence of an inflation diagram of rings, and an Euler system
of multiplicatively closed subsets we may define a torsion category. 

\begin{defn}
Objects of the abelian torsion model $\cAt(G)$ are cochain complexes
\begin{multline*}
\cEi_G\cOcF \tensor V(\cF/G)\stackrel{h^0}\lra 
\bigoplus_{\codim(H)=1}\cEi_H\cOcF \tensor_{\cOcFH} V(\cF/H)\stackrel{h^1}\lra \\
\bigoplus_{\codim(K)=2}\cEi_K\cOcF \tensor_{\cOcFK} V(\cF/K)\stackrel{h^2}\lra \cdots 
\stackrel{h^{r-2}}\lra 
\bigoplus_{\codim(L)=r-1}\cEi_L\cOcF \tensor_{\cOcFL} V(\cF/L)\stackrel{h^{r-1}}\lra 
V(\cF/1) 
\end{multline*}
where the sums are over connected groups of the stated sort and $V(\cF/K)$ is a torsion $\cOcFK$-module. 
The decomposition into direct sums and tensor products 
is a given part of the structure so the morphisms of $\cAt(G)$ are
given by $\cOcFK$-maps $V(\cF/K)\lra V'(\cF/K)$ that give a map of cochain
complexes. 

It is convenient to formalize this a little further. 
\begin{itemize}
\item For each connected subgroup $K$ we have a torsion $\cOcFK$-module
  $V(\cF/K)$, which we call the {\em $\cF/K$ torsion component} of the object. 
\item If $K\supseteq L$ there is an inflation map $\cOcFK \lra \cOcFL$
  and an upward {\em vertical} map
$$v_{\cF/K}^{\cF/L}: V(\cF/K)\lra \cEi_{K/L} \cOcFL\tensor_{\cOcFK}V(\cF/K)$$
of $\cOcFK$-modules.
\item If $K\supseteq L$ there is an inflation map $\cOcFK \lra \cOcFL$
  and rightward  {\em horizontal} structure maps
$$h_L^K: \cEi_K \cOcFL \tensor_{\cOcFK} V(\cF/K) \lra V(\cF/L)$$
of $\cOcFL$-modules
\item For each connected subgroup $M$ the horizontal maps for
  subgroups containing $M$ form  a cochain complex $C_{G/M}(\cF/M)$
\begin{multline*}
\cEi_{G/M}\cOcFM \tensor V(\cF/G)\stackrel{h^0_{G/M}}\lra 
\bigoplus_{\codim(H/M)=1}\cEi_{H/M}\cOcFM \tensor_{\cOcFH} V(\cF/H)\stackrel{h^1_{G/M}}\lra \\
 \cdots 
\lra \bigoplus_{\codim(L/M)=\dim(G/M)-1}\cEi_{L/M}\cOcFM
\tensor_{\cOcFL} V(\cF /L)
\stackrel{h^{\dim(G/M)-1}_{G/M}}\lra V(\cF/M) 
\end{multline*}
of $\cOcFM$-modules.
\end{itemize}
\end{defn}

\begin{remark}
\newcommand{\LI}{\mathcal{LI}}
(i) We note that there are no suspensions on the objects $V(\cF /H)$. At
present it seems odd to even comment on this, but it will be the basis
of discussion when we return to consider $G$-spectra. 

(ii) There is no direct relationship between the different modules
$V(\cF/K)$. We can define a localized inflation diagram $\LI$ of module categories 
$$\LI_G: \connsub (G)\lra \cat$$
where $\LI_G(G/H)=\cOcFH\mod$ and  if $L\subseteq K$ then 
$$\LI_G(\pi_{G/K}^{G/L}): \LI_G(G/K)=\cOcFK\mod \lra \cOcFL\mod =\LI_G(G/L)$$
is defined by 
$$\LI_G(\pi_{G/K}^{G/L}) (M) =\cEi_{K/L}\cOcFL\tensor_{\cOcFK}M. $$
In this context $V$ is a section of the diagram  $\LI_G$. 
\end{remark}

\begin{example}
When $G$ is a circle,  the diagram is very simple:  an object is given
by a diagram
$$\xymatrix{
\cEi_G \cOcF \tensor V(\cF/G)\rto & V(\cF/1)\\
V(\cF/G)\uto &
}$$
where $V(\cF/G)$ is a graded $\Q$-vector space and $V(\cF/1)$ is a torsion $\cOcF$-module.
\end{example}

\begin{example}
In rank 2 we may still  display the diagram.

In short form, $\cAt (G)$ is the category of cochain complexes of $\cOcF$-modules 
$$\cEi_G\cOcF \tensor V(\cF/G) \stackrel{h^0}\lra \bigoplus_H\cEi_H \cOcF 
\tensor_{\cOcFH}V(\cF/H) \stackrel{h^1}\lra  V(\cF/1), $$
where $V(\cF/G)$ is a $\Q$-module, $V(\cF/H)$ is a torsion $\cOcFH$-module
and $V(\cF/1)$ is a torsion $\cOcF$-module. 
 More explicitly objects are actually part of a larger  diagram with this as the top horizontal: 
$$
\xymatrix{
\cEi_G\cOcF \tensor V(\cF/G)\rto^(.4){h^0} & \bigoplus_H\cEi_H \cOcF 
\tensor_{\cOcFH} V (\cF/H)\rto^(.7){h^1} &  V(\cF/1)\\
\cEi_{G/H}\cOcFH \tensor V(\cF/G) \uto^j \rto^(.6){h^G_H} & V(\cF/H) \uto^j & \\
V(\cF/G) \uto^i &&
}$$
The top row is the cochain complex $C_G(\cF/1)$ of $\cOcF$-modules, the
second row (which is really just one of the countably many second rows
corresponding to codimension 1 connected subgroups $H$) is the cochain complex $C_{G/H}(\cF/H)$
of $\cOcFH$-modules and the third row is the cochain complex $C_{G/G}(\cF/G)$ of 
$\Q$-modules, since $\cO_{\cF/G}=\Q$.  We require that $V(\cF/G)$ is a
$\Q$-module,  
 $V(\cF/H)$ is a torsion  $\cOcFH$-module in the sense that
 $\cEi_{G/H}V(\cF/H)=0$ and $V(\cF/1)$ is a torsion  $\cOcF$-module in the
 sense that $\cEi_H V(\cF/1)=0$ for all circle subgroups $H$. 
\end{example}

\subsection{Maps into sums}

 Since the map from a sum to a product is a monomorphism, a map 
$\theta: A\lra \bigoplus_i B$ is determined by the components 
$\theta_i: A\lra B_i$. If $A$ is finitely generated, only finitely 
many of these are non-zero, but in general 
$$\Hom (A, \bigoplus_i B_i) =\ilim_{\alpha} \Hom (A_\alpha, 
\bigoplus_i B_i)
=\ilim_{\alpha} \bigoplus_i \Hom (A_\alpha,  B_i), $$
where $A_\alpha$ runs through finitely generated submodules of $A$. 
We say that $\{ \theta_i\}_i$ is {\em locally finite} if it lies in
this subgroup. 

We may consider what this means for the horizontal maps
$$h: \bigoplus_K \cEi_K \cOcFL \tensor_{\cOcFK} V(\cF/K) \lra \bigoplus_L
V(\cF/L). $$
Such a map $h$ is freely and uniquely determined by the components
$$h(K) : \cEi_K \cOcFL \tensor_{\cOcFK} V(\cF/K) \lra \bigoplus_L
V(\cF/L)$$
by the universal property of the first sum. This map $h(K)$ in turn is
determined by the maps
$$h_L^K: \cEi_K \cOcFL \tensor_{\cOcFK} V(\cF/K) \lra 
V(\cF/L)$$
but for each fixed $K$, the collection $h^K_*$ (where * runs through
the subgroups $L\subseteq K$) is  subject to the
condition of being locally finite. 

In general it is probably easier to consider the map $h$ as a whole
rather than decomposing it into factors. 

\subsection{Extra idempotents}
The full-isotropy topological example of Example
\ref{eg:fullisotropy} has the feature that the ring
$\cOcFK$ contains   idempotents for each subgroup $\Kt$ with identity
component $K$. It is a consequence of the torsion condition that the
natural map from the sum of idempotent pieces gives a direct sum decomposition
$$V(\cF/K)=\bigoplus_{\Kt}\Vt (G/\Kt),$$
with $\Vt (\Kt)$ a torsion $H^*(BG/\Kt)$-module. This is the reason
for our notation, since of course we usually find $V (\cF/ K)\neq
V(G/K)$. 

This also allows us to explain the quotient notation: for comparison
with subgroups we refer to quotients call it $V(G/H)$ after the
ambient group. The point is that  if $G\supseteq H \supseteq M$ we have a 
canonical isomorphism $(G/M)/(H/M)=G/H$.

This means that there is a second layer of sums available for
decomposition, and we may consider the idempotent pieces
$\htt_{\Lt}^{\Kt}$.  These  also determine the map $h$, but these  are
now subject to two separate sets of local finiteness conditions. 

In fact the first condition is that $\htt_{\Lt}^{\Kt}$ is only nonzero
when $\Lt$ is cotoral in $\Kt$. Then for a fixed subgroup $\Kt$, the
collection $\htt^{\Kt}_*$ (where $*$ runs through subgroups $\Lt$
cotoral in $\Kt$) is locally finite. Furthermore, for each fixed $K$
we may write $h^K_{\Lt}=\bigoplus_{\Kt}\htt^{\Kt}_{\Lt}$ and then the
collection $h^K_*$ is locally finite.

\subsection{Support and geometric fixed points}

The most visible feature of an object of $\cAt(G)$ is where it is
non-zero. 

\begin{defn}
For an object $X$ of $\cAt(G)$ the {\em connected support} is defined by 
$$\supp_c (X)=\{ H \in \connsub(G) \st V(\cF/H)\neq 0\}.$$

In the full isotropy example we may also define the {\em support}
$$\supp (X)=\{ \Ht \in \sub(G) \st \Vt (G/\Ht)\neq 0\}.$$
\end{defn}

\begin{remark} It is clear that in the full isotropy example the
  connected support can be recovered from the support 
$$\supp_c(X)=\{ H \st \mbox{ there is a subgroup } \Ht \in \supp
(X) \mbox{ with identity component } H \}.$$
\end{remark}

For a connected subgroup of $G$ there is a functor 
$$\Phi^K: \cAt(G)\lra \cAt (G/K). $$
obtained by picking out the part of the diagram below $K$ in the sense
that if $H\supseteq K$ 
$$(\Phi^K X) ((G/K)/(H/K))=X(G/H). $$

\begin{example}
For example if
$$X=[\cEi_G\cOcF \tensor V(\cF/G) \stackrel{h^0}\lra \bigoplus_H\cEi_H \cOcF 
\tensor_{\cOcFH}V(\cF/H) \stackrel{h^1}\lra  V(\cF/1)]$$
we have
$$\Phi^KX=\left[ \cEi_{G/K}\cOcFK\tensor V(\cF/G)\lra V(\cF/K) \right]. $$
\end{example}

It is evident that 
$$\supp (\Phi^KX)=\supp (X)\cap \{ H \st H\supseteq K\}, $$
where we identify the subgroups of $G/K$ as subgroups of $G$ containing $K$.

\section{The torsion model and the torsion homology functor}
\label{sec:piAt}
 
We return to the topology which motivated the definition of
$\cAt(G)$. Thus $G$ is a torus of rank $r$ and we consider $G$-spectra
 with arbitrary geometric isotropy. In this section we will define 
the homology functor $\piAts: \Gspectra \lra \cAt (G)$. 

\subsection{Isotropy separation}
First we recall the filtration 
$$\emptyset \subset \cF_{\leq 0}\subset \cF_{\leq 1}\subset
\cF_{\leq 2} \subset \cdots  \subset \cF_{\leq r}=\All$$
of the set of closed subgroups, where $\cF_{\leq s}=\{ K \st \dim (K)\leq s\}$. Taking universal
spaces we have the diagram 
$$\xymatrix{
\ast =E(\emptyset)_+ \rto & E(\cF_{\leq 0})_+\rto \dto  &E(\cF_{\leq 
  1})_+\rto \dto &E(\cF_{\leq 
  2})_+\rto \dto 
&\cdots \rto&E(\cF_{\leq r})_+=E\All_+=S^0 \dto \\
&E\lr{0}&E\lr{1}&E\lr{2}&&E\lr{r}
}$$
where $E\lr{s}=\cofibre(E(\cF_{\leq r-1})_+\lra E(\cF_{\leq r})_+)$.

Composing the vertical maps with the connecting maps, we obtain the
sequence of maps
$$\xymatrix{ 
E\lr{r}\rto&\Sigma E\lr{r-1}\rto &
\cdots \rto&\Sigma^{r-1} E\lr{1}\rto &\Sigma^r E\lr{0}. 
}$$ 
This is a cochain complex in the sense that the composite of two
adjacent maps is nullhomotopic.

Finally, by use of idempotents in Burnside rings, we have a rational splitting
$$E\lr{s}\simeq \bigvee_{\dim (\Kt)=s}E\lr{\Kt} \simeq \bigvee_{\dim
  (K)=s} \siftyV{K}\sm E\cF/K_+ , $$
where $\Kt$ runs through all subgroups of dimension $s$ and $K$ runs
through connected subgroups of dimension $s$. As usual, 
$E\lr{\Kt}=\cofibre(E[\subset \Kt]_+\lra E[\subseteq \Kt]_+)$ (with
geometric isotropy the singleton $\{ \Kt\}$), and
$\siftyV{K}=\bigcup_{V^K=0}S^V$ (with geometric isotropy consisting of
exactly those 
subgroups containing $K$). 

\begin{example}
If $r=2$ we may write this in more familiar terms 
$$\xymatrix{
\ast \rto & \efp \rto \dto  &\epp \rto \dto &S^0\dto \\
&\efp &\bigvee_H \siftyV{H}\sm E\cF/H_+&\siftyV{G}
}$$
where $\cF$ is the family of finite subgroups and $\cP$ is the family
of proper subgroups and $H$ runs through circle subgroups.  This in turn gives a sequence 
$$\siftyV{G}\lra \bigvee_H \Sigma \siftyV{H}\sm E\cF/H_+ \lra \Sigma^2 \efp$$
in which the composite is null. 
\end{example}

\subsection{Homotopy of pure strata}
The first approximation to $\cAt (G)$ is obtained by taking the 
homotopy of the isotropy separation filtration. One can give a formula for the homotopy of
the subquotients in terms of the homology of the Borel construction. 

\begin{lemma}
$$\piGs(E\lr{s}\sm
X)=\Sigma^{r-s}\bigoplus_{\dim(\Kt)=s}H^{G/\Kt}_*(\Phi^{\Kt}X)$$
where the sum is over all subgroups $\Kt$ of dimension $s$. The term 
$H^{G/\Kt}_*(\Phi^{\Kt}X)$ is a torsion $H^*(BG/\Kt)$-module.
\end{lemma}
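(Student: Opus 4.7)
The plan is to compute each $\piGs(E\lr{\Kt}\sm X)$ separately and then assemble, reducing each piece to a Borel-construction calculation for the quotient torus $G/\Kt$.

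First I would apply the rational wedge decomposition $E\lr{s}\simeq\bigvee_{\dim(\Kt)=s}E\lr{\Kt}$ (using the Burnside-ring idempotents recorded just above) to reduce to computing $\piGs(E\lr{\Kt}\sm X)$ for a single subgroup $\Kt$ of dimension $s$; since rational $G$-homotopy commutes with wedges,
\[
\piGs(E\lr{s}\sm X)\cong\bigoplus_{\dim(\Kt)=s}\piGs(E\lr{\Kt}\sm X).
\]

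Next, since $E\lr{\Kt}$ has geometric isotropy exactly $\{\Kt\}$, so does $E\lr{\Kt}\sm X$, and its ordinary $\Kt$-fixed points coincide with its geometric $\Kt$-fixed points. The standard change-of-groups isomorphism therefore gives
\[
\piGs(E\lr{\Kt}\sm X)\cong \pi^{G/\Kt}_*\bigl(\Phi^{\Kt}(E\lr{\Kt})\sm\Phi^{\Kt}X\bigr).
\]
Using the identification of $E\lr{\Kt}$ as the $\Kt$-idempotent summand of $\siftyV{K}\sm E\cF/K_+$ (where $K$ is the identity component of $\Kt$), together with the fact that $\Phi^K(\siftyV{K})\simeq S^0$, one finds $\Phi^{\Kt}(E\lr{\Kt})\simeq EG/\Kt_+$ as a free $G/\Kt$-spectrum. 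So the problem reduces to the classical computation of $\pi^{G/\Kt}_*(EG/\Kt_+\sm \Phi^{\Kt}X)$.

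Finally, this latter group is Borel homology for the torus $G/\Kt$, up to the suspension-shift convention already established for $H^G_*$ in Section~\ref{sec:semifreecircle} (where $\piGs(EG_+)\cong\Sigma H^G_*(S^0)$). For a torus $T=G/\Kt$ of dimension $r-s$ the comparison $\pi^T_*(ET_+\sm Y)\cong \Sigma^{r-s}H^T_*(Y)$ yields the stated formula. The torsion assertion then follows because $\Phi^{\Kt}X$ has trivial geometric fixed points at every proper subgroup of $G/\Kt$, so every Euler class $e(V)\in H^*(BG/\Kt)$ with $V^{H/\Kt}=0$ for some non-trivial $H/\Kt$ acts nilpotently. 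The principal technical obstacle is pinning down the suspension $\Sigma^{r-s}$: this shift comes from the convention-dependent relation between $\pi^T_*(ET_+\sm -)$ and $H^T_*(-)$ for $T=G/\Kt$, and one must verify it really is $\dim(G/\Kt)=r-s$, rather than a shift arising from the representations used to build $\siftyV{K}$ or $\etf$ (which in fact contribute none, since $\Phi^K(\siftyV{K})\simeq S^0$).
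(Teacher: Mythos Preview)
The paper states this lemma without proof, treating it as a standard computation; your outline supplies exactly the natural argument and is correct in its main steps: the rational wedge decomposition of $E\lr{s}$, the identification of $\piGs(E\lr{\Kt}\sm X)$ with $\pi^{G/\Kt}_*\bigl(\Phi^{\Kt}E\lr{\Kt}\sm\Phi^{\Kt}X\bigr)$ via the coincidence of categorical and geometric $\Kt$-fixed points on spectra concentrated over $\Kt$, the identification $\Phi^{\Kt}E\lr{\Kt}\simeq EG/\Kt_+$, and the Adams-isomorphism shift by $\dim(G/\Kt)=r-s$.

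There is, however, a genuine error in your torsion argument. You assert that ``$\Phi^{\Kt}X$ has trivial geometric fixed points at every proper subgroup of $G/\Kt$'', but this is false for an arbitrary $X$: already for $X=S^0$ one has $\Phi^{\Kt}S^0=S^0$, whose geometric $H/\Kt$-fixed points are $S^0$ for every $H\supseteq\Kt$. The torsion of $H^{G/\Kt}_*(\Phi^{\Kt}X)$ has nothing to do with the isotropy of $\Phi^{\Kt}X$; it comes from the factor $EG/\Kt_+$. The spectrum $EG/\Kt_+\sm\Phi^{\Kt}X$ is a \emph{free} $G/\Kt$-spectrum, and for any representation $V$ of $G/\Kt$ with $V^{G/\Kt}=0$ (in particular for any $V$ with $V^{H/\Kt}=0$ for some $H\supsetneq\Kt$) the space $S^{\infty V}$ is nonequivariantly contractible, so $S^{\infty V}\sm EG/\Kt_+\simeq *$. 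Hence inverting $e(V)$ annihilates $\pi^{G/\Kt}_*(EG/\Kt_+\sm\Phi^{\Kt}X)$, and this is the torsion statement. Replace your final sentence with this argument and the proof is complete.
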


The $E_1$-term of the spectral sequence of the filtration is the
homotopy of the sequence
$$\xymatrix{ 
E\lr{r}\sm X\rto&\Sigma E\lr{r-1}\sm X\rto &
\cdots \rto&\Sigma^{r-1} E\lr{1}\sm X\rto &\Sigma^r E\lr{0}\sm X  
}$$ 
namely 
$$\xymatrix{
\piGs(E\lr{r}\sm X)\rto \ar@{=}[d]
&\piGs(\Sigma E\lr{r-1}\sm X)\rto \ar@{=}[d]&
\cdots \rto&\piGs(\Sigma^r E\lr{0}\sm X) \ar@{=}[d]\\
H_*(\Phi^G X)\rto &
\Sigma^2  \bigoplus_{\dim(\Ht)=r-1}H^{G/\Ht}_*(\Phi^{\Ht} X)\rto &
\cdots \rto&
\Sigma^{2r}  \bigoplus_{\dim(F)=0}H^{G/F}_*(\Phi^{F} X)
}$$
In geometric terms this is the direct analogue of the Cousin complex.  In order to get
something more algebraic we need to smash the whole thing with $D\efp$
first. 

\begin{lemma}
\label{lem:htpypureDefp}
$$\piGs(E\lr{s}\sm D\efp \sm X)=\Sigma^{r-s}\bigoplus_{\dim(\Kt)=s}
\cEi_K\cOcF\tensor_{H^*(BG/\Kt)} H^{G/\Kt}_*(\Phi^{\Kt}X), $$
where the sum is over all subgroups $\Kt$ of dimension $s$. The term 
$H^{G/\Kt}_*(\Phi^{\Kt}X)$ is a torsion $H^*(BG/\Kt)$-module.
\end{lemma}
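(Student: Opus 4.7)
The plan is to deduce this from the preceding lemma by isolating the algebraic effect of the extra smash factor $D\efp$. Using the splitting $E\lr{s}\simeq \bigvee_{\dim (\Kt)=s}E\lr{\Kt}$, the target formula decomposes as a direct sum over subgroups $\Kt$ of dimension $s$, and it suffices to show for each such $\Kt$ with identity component $K$ that
$$\piGs(E\lr{\Kt}\sm D\efp \sm X)=\Sigma^{r-s}\cEi_K\cOcF\tensor_{H^*(BG/\Kt)}H^{G/\Kt}_*(\Phi^{\Kt}X).$$

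First I would realise $E\lr{\Kt}$ as the $\Kt$-idempotent summand of $\siftyV{K}\sm E\cF/K_+$. The factor $\siftyV{K}$ has the effect on $\piGs$ of inverting the Euler classes $e(V)$ with $V^K=0$, i.e.\ the elements of $\cE_K$, while the additional smash factor $D\efp$ serves to extend scalars from $H^*(BG/\Kt)$ to the full localized inflation ring $\cEi_K\cOcF$. Combined with the identification of the pure part as $\Sigma^{r-s}H^{G/\Kt}_*(\Phi^{\Kt}X)$ supplied by the preceding lemma, and with the reassembly of the idempotent pieces over subgroups with identity component $K$, this yields the stated formula.

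The main obstacle is the base-change step, i.e.\ verifying that smashing with $D\efp$ implements precisely the functor $\cEi_K\cOcF\tensor_{H^*(BG/\Kt)}(-)$ on each stratum. I would attack this by first treating $X=S^0$ using the isotropy decomposition of $D\efp$ together with an explicit computation of $\piGs(\siftyV{K}\sm E\cF/K_+\sm D\efp)$: here the Borel construction identifies the $\cOcFK$-level contribution and the Euler-class inversion accounts for the passage from $H^*(BG/\Kt)$ to $\cEi_K\cOcF$. The general case then follows by invoking the module structure of $E\lr{\Kt}\sm D\efp\sm X$ over $E\lr{\Kt}\sm D\efp$ and the exactness of tensor products with Euler-localized (hence flat) modules over $H^*(BG/\Kt)$.
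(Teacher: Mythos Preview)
The paper states this lemma without proof (it is followed immediately by a remark, not a proof environment), so there is no argument to compare yours against directly. The identifications you invoke are precisely those the paper uses a few lines later in verifying that $\piAts$ lands in $\cAt(G)$: there one finds the commuting square identifying $\piGs(\siftyV{K}\sm E\cF/K_+\sm X)$ with $V(\cF/K)$ and $\piGs(\siftyV{K}\sm D\efp\sm E\cF/K_+\sm X)$ with $\cEi_K\cOcF\tensor_{\cOcFK}V(\cF/K)$, the vertical map being induced by $S^0\to D\efp$. Your outline is therefore the intended one.

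Two small points deserve tightening. First, your phrasing that $D\efp$ ``extends scalars from $H^*(BG/\Kt)$ to $\cEi_K\cOcF$'' conflates two separate effects: $D\efp$ supplies the $\cOcF$-module structure (its role is to bring in all finite subgroups $F$ cotoral in $\Kt$, as in the remark following the lemma), while the $\cEi_K$-localization comes entirely from the $\siftyV{K}$ factor. Keeping these separate clarifies the computation for $X=S^0$. Second, your reduction to $X=S^0$ via flatness is correct but you should say why: the relevant module is $e_{\Kt}\cOcF=\prod_{F}H^*(BG/F)$ (product over finite $F$ cotoral in $\Kt$), each factor is free over the Noetherian ring $H^*(BG/\Kt)$, products of flats over a Noetherian ring are flat, and localization at $\cE_K$ preserves flatness. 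With that in hand, both sides of your comparison map are homology theories in $X$ and it suffices to check on the cells $G/\Lt_+$, not just $S^0$; since everything is concentrated over $\Kt$ this reduces to $\Lt$ cotoral in $\Kt$, where it is a routine Borel homology computation.
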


\begin{remark}
If $T$ is a module over $H^*(BG/\Kt)$ then 
$$\cOcF \tensor_{H^*(BG/\Kt)}T=[e_{\Kt}\cOcF ]\tensor_{H^*(BG/\Kt)}T$$
where $e_{\Kt}$ is the idempotent supported on finite subgroups
cotoral in $\Kt$. 
\end{remark}

\subsection{Collecting subgroups by identity component}
In this section so far, we have treated all subgroups of the same dimension 
equally. However the behaviour of Euler classes (specifically the fact
that $e(\alpha^n)=ne(\alpha)$, and we are working rationally) means that it becomes 
important to collect together the subgroups according to their 
identity component.  The convention is that letters $G, H, K, ...$
will be connected subgroups, and $\Ht$ is a subgroup with identity 
component $H$, $\Kt$ is a subgroup with identity component $K$, and so
forth. 

\subsection{The homology functor}
It should be apparent that the structures in $\cAt(G)$ mirror those in
topology. The following definition should therefore not be a
surprise. 

\begin{defn}
$$\piAts: \Gspectra \lra \cAt (G)$$
is defined by taking  $C_G(\cF)$ to be $\piGs (D\efp \sm \cdot)$
applied to the sequence 
$$\xymatrix{
E\lr{r}\sm X\rto&\Sigma E\lr{r-1}\sm X\rto &
\cdots \rto&\Sigma^{r-1} E\lr{1}\sm X\rto &\Sigma^r E\lr{0}\sm X 
}$$
Specifically
$$V_X(\cF/H)=\bigoplus_{\Ht}V(G/\Ht)$$ and
$$V_X(G/\Ht)=\pi^{G}_* (\Sigma^{\dim (G/H)} E\lr{\Ht} \sm  X) 
=\Sigma^{2\dim(G/H)}H^{G/\Ht}_*(\Phi^{\Ht}X)$$ 
\end{defn}

\begin{lemma}
This definition does give a functor to $\cAt(G)$.
\end{lemma}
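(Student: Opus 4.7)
The plan is to verify each structural requirement of $\cAt(G)$ in turn for the proposed assignment $X\mapsto\piAts(X)$, relying throughout on Lemma \ref{lem:htpypureDefp}.

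First, each component is of the correct type. By construction $V_X(G/\Ht)=\Sigma^{2\dim(G/H)}H^{G/\Ht}_*(\Phi^{\Ht}X)$ is a graded module over $H^*(BG/\Ht)$, and it is torsion for Euler classes $e(V)$ with $V^H\neq 0$ because $\siftyV{K}\sm E\lr{\Ht}\simeq *$ whenever $K$ properly contains $H$. The idempotent decomposition of $\cOcFH$ then assembles the pieces $V_X(G/\Ht)$, as $\Ht$ varies over subgroups with identity component $H$, into a torsion $\cOcFH$-module $V_X(\cF/H)$.

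Second, the horizontal maps and the cochain complex structure arise by applying $\piGs(D\efp\sm-)$ to the isotropy separation sequence $E\lr{r}\sm X\to\Sigma E\lr{r-1}\sm X\to\cdots\to\Sigma^r E\lr{0}\sm X$. Lemma \ref{lem:htpypureDefp} identifies the $s$-th term with $\bigoplus_{\dim K=s}\cEi_K\cOcF\tensor_{\cOcFK}V_X(\cF/K)$, which is precisely the $s$-th term of the defining complex $C_G(\cF)$. The consecutive composites vanish because they arise from connecting maps of a pair of adjacent cofibre sequences, whose composite is null-homotopic. The subcomplexes $C_{G/M}(\cF/M)$ come from the analogous construction after further localization and inflation along $\cOcFM$, and the vertical structure maps $v^{\cF/L}_{\cF/K}$ are the natural localization maps built into the identification in Lemma \ref{lem:htpypureDefp}.

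The main obstacle will be checking the local finiteness of the horizontal maps. Here the wedge splitting $E\lr{s}\simeq\bigvee_{\Kt}E\lr{\Kt}$ is decisive: since $\piGs$ converts wedges to direct sums, each source summand $V_X(G/\Kt)$ maps only into target summands indexed by subgroups $\Lt$ cotoral in $\Kt$, and local finiteness for a finitely generated submodule of $V_X(G/\Kt)$ reduces to the fact that a map from a compact $G$-spectrum into a wedge factors through a finite sub-wedge. Functoriality in $X$ is then immediate from the naturality of all constructions involved: a morphism $f:X\to Y$ of $G$-spectra induces a compatible family of maps on every summand that commutes with all horizontal and vertical structure maps, so $\piAts(f)$ is a morphism in $\cAt(G)$.
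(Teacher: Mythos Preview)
Your argument is correct and follows essentially the same route as the paper: both of you invoke Lemma~\ref{lem:htpypureDefp} to identify the terms and verify torsion, and both observe that the cochain-complex condition holds because consecutive maps are composites of a connecting map and an inclusion of adjacent cofibre sequences, hence null-homotopic.

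There is one point where the paper is more explicit than you are. You write that the subcomplexes $C_{G/M}(\cF/M)$ ``come from the analogous construction after further localization and inflation along $\cOcFM$'' and that the vertical maps are ``built into'' Lemma~\ref{lem:htpypureDefp}. The paper instead pins down the topological origin of the comparison $C_{G/K}(\cF/K)\to C_{G/L}(\cF/L)$: reducing to $L=1$, the vertical map at the $G/K$-spot is exactly the map
\[
\piGs(\siftyV{K}\sm E\cF/K_+\sm X)\lra \piGs(\siftyV{K}\sm D\efp\sm E\cF/K_+\sm X)
\]
induced by the unit $S^0\to D\efp$, which under the identifications of Lemma~\ref{lem:htpypureDefp} becomes the canonical inclusion $V(\cF/K)\to\cEi_K\cOcF\tensor_{\cOcFK}V(\cF/K)$. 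Your phrase ``localization and inflation'' is pointing at the right algebraic effect, but the paper's formulation makes clear \emph{which} topological map is responsible and why it is natural in $X$.

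Conversely, you address something the paper leaves implicit: the local finiteness of the component maps $h^K_*$. Your observation that $\piGs$ sends wedges to direct sums already forces the image of any element to have only finitely many nonzero components, so local finiteness is automatic; the compactness argument you sketch is a correct refinement but more than is strictly needed.
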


\begin{proof}
 The fact that the modules are torsion and the maps are of the correct
 form is the content of Lemma \ref{lem:htpypureDefp}. 
It is clear that $C_G(\cF)$ is a cochain complex since the composite of two
morphisms is null-homotopic.
Finally, we need to observe that we have the appropriate comparison
map $C_{G/K}(\cF/K)\lra C_{G/L}(\cF/L)$ when $L\subseteq K$. It evidently suffices to
treat the case $L=1$, and at the point $G/K$  we need the vertical map 
$$\xymatrix{
\cEi_K \cOcF\tensor_{\cOcFK}V(\cF/K) \rrto^{\cong}&&\piGs 
(\siftyV{K}\sm D\efp \sm E\cF/K_+\sm  X)  \\
V(\cF/K) \rto^{\cong}\uto & \pi^{G/K}_*(E\cF/K_+\sm \Phi^K X) \rto^{\cong}&
\piGs 
(\siftyV{K}\sm E\cF/K_+\sm  X) \uto
}$$
It is apparent this is induced by the map $S^0\lra D\efp$.

All of the justifications are natural in $X$, so this does give a
functor. 
\end{proof}

\begin{remark}
Note the conventions on suspensions. In transformation groups,  the modules $H_*^{G/K}(\Phi^KX)$
occur naturally. We could have arranged that they occur in the model 
without suspensions, but for general groups this would lead to 
confusion.  Instead, with our conventions, we find that in $\piAts(X)$
we have 
$$\Vt_X (G/\Kt)=\Sigma^{2\dim (G/\Kt)} H^{G/\Kt}_*(\Phi^{\Kt}X)), $$
so that the contribution from a subgroup occurs suspended by twice its codimension. 

For example in rank 1, we take the homotopy of 
$$D\efp \sm \etf \sm X \lra \Sigma \efp \sm X$$
giving the object 
$$\cEi_G\cOcF \tensor H_*(\Phi^GX)\lra 
\bigoplus_F\Sigma^2H_*^{G/F}(\Phi^FX)$$
Thus we have 
$$V_X(G/G)=H_*(\Phi^GX), 
V_X(G/1)=\Sigma^2\bigoplus_FH^{G/F}_*(\Phi^FX). $$
\end{remark}

\begin{remark}
One of the attractive features of the torsion model is that it
directly reflects the geometric isotropy. It is immediate that the
geometric isotropy of $X$ is given by 
$$\cI_G(X)=\supp (\piAts (X)). $$
\end{remark}

\section{Algebra of the  torsion model}
\label{sec:algAt}
We now turn to an algebraic study of the torsion model. Since our
purpose is to define an Adams spectral sequence, it is not surprising
that this is mostly about homological algebra.

\subsection{Skyscraper objects}
We begin by giving a name to the most obvious and elementary
construction (As in Section \ref{sec:semifreecircle}, this is inequivalent to the construction of
the same name in the standard model; there is little danger of
confusion since the standard model does not feature in this paper). 

\begin{defn}
If $T$ is a torsion $\cOcFH$-module, we wrtite $\ft_H(T)$
for the object
$$\ft_H(T)(\cF/K)=
\dichotomy{\cEi_H\cOcF \tensor_{\cOcFH} T& \mbox{ if } K=H}
{0&  \mbox{ if } K\neq H}
$$
\end{defn}

Evidently if $T\neq 0$,  the object $\ft_H(T)$ corresponds to a $G$-spectrum with
connected geometric isotropy $\{H\}$. In particular
$$\piAts (E\langle \Ht\rangle)=\ft_H(\Sigma^{\dim (G/\Ht)}H_*(BG/\Ht)), $$
so their importance is clear. However, even though $H_*(BG/\Ht)$ is an
injective $\cOcFH$-module,  from an algebraic point of view this
object is not particularly simple. 

\begin{lemma}
For a torsion $\cOcFK$-module $T$, 
$$\HomAt ( X, \ft_K(T))=\Hom_{\cOcFK }(C_KX , T)$$
where 
$$C_KX=\cok (\bigoplus_{H>K} \cEi_{H/K}\cOcFK\tensor_{\cOcFH}V_X(\cF/H) \lra V_X(\cF/K))$$
\end{lemma}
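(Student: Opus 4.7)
The plan is to unpack the definition of a morphism in $\cAt(G)$ with target $\ft_K(T)$ and observe that the cochain-complex compatibility collapses to a single constraint, namely factoring through $C_KX$.

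First I would recall that, by construction, $\ft_K(T)$ has torsion component $T$ at $\cF/K$ and zero torsion component at every other connected subgroup $\cF/L$. Consequently, a morphism $\alpha: X \lra \ft_K(T)$ in $\cAt(G)$ is determined by a single $\cOcFK$-linear map $\phi=\alpha_{\cF/K}: V_X(\cF/K) \lra T$; all other components $\alpha_{\cF/L}$ are forced to be zero since their targets are zero. Conversely, any such $\phi$ is a candidate for a morphism, and the entire question is whether it is compatible with the horizontal structure maps.

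Next I would examine the square
$$
\xymatrix{
\cEi_{H/K}\cOcFK\tensor_{\cOcFH} V_X(\cF/H) \ar[r]^(.65){h^H_K} \ar[d] & V_X(\cF/K) \ar[d]^{\phi} \\
\cEi_{H/K}\cOcFK\tensor_{\cOcFH} V_{\ft_K(T)}(\cF/H) \ar[r]^(.7){0} & T
}$$
for each $H>K$. Since $V_{\ft_K(T)}(\cF/H)=0$ for $H>K$, the left-hand vertical map is zero, so commutativity of this square is precisely the condition that $\phi\circ h^H_K=0$. Collecting across all $H>K$, this says exactly that $\phi$ annihilates the image of $\bigoplus_{H>K}\cEi_{H/K}\cOcFK\tensor_{\cOcFH}V_X(\cF/H) \lra V_X(\cF/K)$, i.e.\ that $\phi$ factors uniquely through the cokernel $C_KX$. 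I would then note that the remaining compatibility squares carry no information: for $L<K$ the target of $h^K_L$ in $\ft_K(T)$ is $V_{\ft_K(T)}(\cF/L)=0$ and both composites around the square vanish; and for $H>M$ with $M\neq K$ (including the diagonal cases $H>K>M$), the target and the relevant vertical map are again zero on the $\ft_K(T)$ side, so commutativity is automatic.

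Putting these observations together yields a natural bijection
$$
\HomAt(X,\ft_K(T)) \stackrel{\cong}{\lra}
\{\phi\in\Hom_{\cOcFK}(V_X(\cF/K),T) \st \phi\circ h^H_K=0 \text{ for all } H>K\}
= \Hom_{\cOcFK}(C_KX, T),
$$
which is the assertion of the lemma. The main obstacle is simply being careful that I have enumerated every horizontal structure map whose square involves the nonzero component $T$ at $\cF/K$; as shown, only the squares coming from $H>K$ are non-trivial, which is exactly why only $C_KX$ (not some refined quotient involving maps out of $V_X(\cF/K)$) appears.
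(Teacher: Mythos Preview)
Your proof is correct and is the natural direct verification; the paper states this lemma without proof, treating it as immediate from the definition of morphisms in $\cAt(G)$, so your argument is exactly the unpacking one would expect.
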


We see that $C_GX=V_X(\cF/G)$, so that $\ft_G(W(\cF/G))$ is always
injective, but for $H\neq G$ the object
$\ft_H(T)$ is  never injective unless it is zero.  

\subsection{Adjoint form} 
It is invaluable to have a right adjoint to evaluation at a particular
subgroup $H$, and in writing these down we would like to work with
adjoint form of torsion diagrams. The basic idea is as in the rank 1
case, but this is complicated by the fact that there are
infinitely many connected subgroups in most dimensions.

\begin{example}
We make this explicit in the case $G$ has rank 2. Suppose then that 
$$X=\left[ \cEi_G \cOcF\tensor V\stackrel{h^0}\lra\bigoplus_H \cEi_H\cOcF 
\tensor_{\cOcFH} T(H)\stackrel{h^1}\lra S\right]$$
where $V$ is a $\Q$-vector space $T(H)$ is a torsion $\cOcFH$-module 
and $S$ is a torsion $\cOcF$-module (against our general principle, 
we are abbreviating $T(\cF/H)=T(H))$. We consider first the case when
only finitely many of the $T(H)$ are non-zero.  The adjoint form is 
then  
$$\xymatrix{
V\rto &\bigoplus_H \Hom_{\cOcFH}(\cEi_{G/H}\cOcFH, T(H)) \rto 
&\Hom_{\cOcF} (\cEi_G\cOcF, S) 
}$$

In long form, $X$ is given by a diagram 
$$
\xymatrix{
\cEi_G\cOcF \tensor V\rto^(.37){h^0} & \bigoplus_H\cEi_H \cOcF 
\tensor_{\cOcFH} T(H) \rto^(.78){h^1} &  S\\
\cEi_{G/H}\cOcFH \tensor V \uto^j \rto^(.56){h^G_H} & T(H) \uto^j & \\
V\uto^i &&
}$$

We convert this to adjoint form 
$$\xymatrix{
&&S\dto \\
&\bigoplus_H T(H)\rto \dto &  \Hom_{\cOcF} (\cEi_H\cOcF, S)\dto \\
V\rto &\bigoplus_H \Hom_{\cOcFH}(\cEi_{G/H}\cOcFH, T(H)) \rto &\Hom_{\cOcF} (\cEi_G\cOcF, S) 
}$$
where, for the purpose of understanding the bottom right entry, it is worth noting 
\begin{multline*}
\Hom_{\cOcF}(\cEi_G\cOcF, S) 
=\Hom_{\cOcF}(\cEi_H\cOcF\tensor_{\cOcFH}\cEi_{G/H}\cOcFH, S) =\\
\Hom_{\cOcFH}(\cEi_{G/H}\cOcFH, \Hom_{\cOcF}(\cEi_H\cOcF, S)) 
\end{multline*} 

When infinitely many of the $T(H)$ are non-zero, the map from $V$ maps
into the product, but it is a locally finite map. Accordingly, it is
still sufficient to describe the second map as coming from the sum. 
\end{example}

It is possible in principle to describe the adjoint form in
full detail. However our narrow purpose is to define a right adjoint
to evaluation, so  we will just
record what the adjoint form does on components. 

\begin{defn}
Given a flag
$$G\supset H_{r-1} \supset H_{r-2} \supset \cdots \supset
H_1 \supset H_0=1$$
of connected subgroups, the corresponding component of the torsion
model is the cochain complex 
\begin{multline*}
\cEi_G \cOcF \tensor V(\cF/G)\lra
\cEi_{H_{r-1}} \cOcF \tensor_{\cO_{\cF/H_{r-1}}}  V(\cF/H_{r-1})\\
\lra
\cEi_{H_{r-2}} \cOcF \tensor_{\cO_{\cF/H_{r-2}}}  V(\cF/H_{r-2})\lra
\cdots \lra 
\cEi_{H_{1}} \cOcF \tensor_{\cO_{\cF/H_{1}}}  V(\cF/H_{1})\lra
  V(\cF/1)
\end{multline*}
Its adjoint form is 
\begin{multline*}
V(\cF/G)\lra
\Hom_{\cO_{\cF/H_{r-1}}}(\cEi_{G/H_{r-1}}\cO_{\cF_{H_{r-1}}}, 
V(\cF/H_{r-1})) \\
\lra \Hom_{\cO_{\cF/H_{r-2}}}(\cEi_{G/H_{r-2}}\cO_{\cF_{H_{r-2}}}, 
V(\cF/H_{r-2})) \lra \cdots \\ 
\lra
\Hom_{\cO_{\cF/H_{1}}}(\cEi_{G/H_{1}}\cO_{\cF/H_1}, 
V(\cF/H_{1})) \lra 
\Hom_{\cO_{\cF}}(\cEi_{G}\cOcF, 
V(\cF/1)) 
\end{multline*}
\end{defn}

\begin{remark}
  Even if $V(\cF/H)$ is $\cF/H$-torsion, it does not follow that
  $\Hom_{\cOcFH}(\cEi_{G/H}\cOcFH, V(\cF/H))$ is $\cF/H$-torsion. 
  \end{remark}

\subsection{Right adjoints to evaluation}
From an algebraic point of view the simplest behaviour comes from
objects that let us calculate in terms of rings rather than diagrams
of rings. For this we use  right adjoints to evaluation at a subgroup.

The idea is that the adjoint forms are constant
above a given point. This is correct if the subgroup in question is of
codimension $\leq 1$, but not in general, because of the need to use
torsion objects and the distinction between sums and
products.

\begin{example} If $G$ is of rank 2, the 
%The adjoint forms are
%$$\att_G(V)=(V\lra 0 \lra 0)$$
%$$\att_H(T(H))=( \Hom_{\cOcFH}(\cEi_{G/H}\cOcFH, T(H)) \lra 
%\Hom_{\cOcFH}(\cEi_{G/H}\cOcFH, T(H) )\lra 0)$$
%$$\att_1(T(H))=( \Hom_{\cOcF}(\cEi_{G}\cOcF, S) \lra
%\Hom_{\cOcF}(\cEi_{G}\cOcF, S) \lra \Hom_{\cOcF}(\cEi_{G}\cOcF, S)) $$
functors representing evaluation are as follows.
$$\at_G(V)=f_G(V)=\left[ \cEi_G \cOcF \tensor V\lra 0 \lra 0\right]$$
$$\at_H(T(H))=\left[ \cEi_G\cOcF\tensor 
\Hom_{\cOcFH}(\cEi_{G/H}\cOcFH, T(H))\lra 
\cEi_H\cOcF\tensor_{\cOcFH} T(H) \lra 0\right] $$
$$\at_1(S)=\left[ \cEi_G\cOcF\tensor 
\Gamma_{\Sigma}\Hom_{\cOcF} (\cEi_G\cOcF, S)\stackrel{h^0}\lra 
\bigoplus_H \cEi_H\cOcF\tensor_{\cOcFH}\Gamma_{\cF /H}\Hom_{\cOcF}
(\cEi_{H}\cOcF, S)
\stackrel{h^1}\lra
S\right]$$
where $\Gamma_{\cF/H}$ indicates torsion $\cOcFH$-modules and $\Gamma_{\Sigma}$ refers to the elements mapping into the
the torsion submodules, and into the sum rather than the
product. These two phenomena occur in rank 2 for the first time. 
% The construction $\at_1(S)$ needs further comment. Firstly, the
% necessity of inserting $\Gamma_{G/H}$ occurs here for the first time:  a
% map $\theta: \cEi_H\cOcF \lra S$ need not be annihilated by
% $\cEi_{G/H}$ even though $S$ is torsion. 

% Second, we need to argue that
% the map $h^0$ maps into the sum. It suffices to deal with pure
% tensors, so we consider the image of $\lambda/e(V)\tensor \theta$,
% where $\lambda \in \cOcF$, $V$ is a representation with $V^G=0$ and
% $\theta: \cEi_G\cOcF \lra S$. To identify the $H$-component we write 
% $\cEi_G \cOcF=\cEi_H\cOcF\tensor_{\cOcFH}\cEi_{G/H}\cOcFH$, and
% $\lambda/e(V)=\lambda/e(V')\tensor  1/e(V^H)$ where $V=V'\oplus V^H$. 
% We then have 
% $$h^0(\frac{\lambda}{e(V)}\tensor \theta)=\frac{\lambda}{e(V')}\tensor
% h^G_H(\frac{1}{e(V^H)}\tensor \theta)=\frac{\lambda}{e(V')}\tensor \theta
% (\frac{\bullet}{e(V^H)}). $$
% We  observe that $\theta (1)$ is  torsion, so that there is a $W $ with
% $W^H=0$ and $e(W)\theta  (1)=0$. Hence $\theta $ is zero as a map
% $\cEi_H\cOcF\lra S$. The $H$ component of $h^0$ can therefore only be non-zero 
% if $V^H\neq 0$, and there are only finitely many such $H$.
\end{example}

Equipped with this example we can define the objects $\at_L(T)$.

\begin{defn}
\label{defn:atL}
Given a torsion $\cOcFL$-module $T$, we may define an object
$\at_L(T)$ by taking its torsion components to be 
$$V(\cF /H)=\dichotomy
{\Gamma_\Sigma \Gamma_{\cF/H} \Hom_{\cOcFL}(\cEi_{H/L}\cOcFL , T)
 & \mbox{ if } K\supseteq L } 
{0 & \mbox{ if } K\not\supseteq L } 
$$
The functor $\Gamma_{\cF/H}$ takes the torsion submodule and
$\Gamma_\Sigma$ takes the submodule mapping into the sum; we will
describe them fully in the next subsection and prove these properties
(Lemma \ref{lem:Gamma}), but for the present we
need only know they give natural submodules of the Hom functor.
\end{defn}

% To give more detail, note that the  $(K,L)$ component of the horizontal structure map 
% $$\cEi_{K/L}\cOcFL \tensor_{\cOcFK}
% \Hom_{\cOcFL}(\cEi_{K/L}\cOcFL , T)\lra T$$
% is evaluation, and if $H\supseteq K \supseteq L$ the structure map 
% $$\cEi_{H/K}\cOcFK \tensor_{\cOcFH}
% \Hom_{\cOcFL}(\cEi_{H/L}\cOcFL , T)\lra 
% \Hom_{\cOcFL}(\cEi_{K/L}\cOcFL , T)$$
% may be defined using $\cEi_{H/L}\cOcFL=\cEi_{K/L}\cOcFL\tensor_{\cOcFK}\cEi_{H/K}\cOcFK$. 

% Now $\Gamma_\Sigma$  can be defined using the pullback square 
%  $$\xymatrix{
%  \Gamma_{\Sigma}\Gamma_{\cF/H}
%  \Hom_{\cOcFL}(\cEi_{H/L}\cOcFL , T)\dto \rto &
%  \Hom_{\cOcFK}(\cEi_{H/K}\cOcFK, 
%  \bigoplus_{K\leq H}
%  \Gamma_\Sigma \Gamma_{\cF/K}\Hom_{\cOcFL}(\cEi_{K/L}\cOcFL , T))\dto \\
%  \Hom_{\cOcFL}(\cEi_{H/L}\cOcFL , T) \rto &
%  \Hom_{\cOcFK}(\cEi_{H/K}\cOcFK,  \prod_{K\leq H}
%  \Hom_{\cOcFL}(\cEi_{K/L}\cOcFL , T)) 
%  }$$
% \end{defn}

\begin{lemma}
The functor $\at_L$ is right adjoint to evaluation at $L$: for a
torsion $\cOcFL$-module $T$, we have 
$$\HomAt (X, \at_L(T))=\Hom_{\cOcFL} (V_X(\cF/L), T)$$
\end{lemma}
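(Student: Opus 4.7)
The plan is to verify the adjunction by exhibiting an explicit inverse to evaluation at the $\cF/L$-component. The forward direction is immediate: given $\phi \colon X \to \at_L(T)$, take $\phi_L \colon V_X(\cF/L) \to V_{\at_L(T)}(\cF/L)$, and observe that since $\cEi_{L/L}\cOcFL = \cOcFL$ and $T$ is already torsion, Definition \ref{defn:atL} yields $V_{\at_L(T)}(\cF/L) = \Gamma_\Sigma\Gamma_{\cF/L}\Hom_{\cOcFL}(\cOcFL, T) = T$, so $\phi_L$ is the desired $\cOcFL$-linear map.

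For the inverse, given $f \colon V_X(\cF/L) \to T$, I will construct $\tilde f \colon X \to \at_L(T)$ componentwise: set $\tilde f_K = 0$ for $K \not\supseteq L$, and for $K \supseteq L$ use the tensor-hom adjunction to replace the task of specifying $\tilde f_K \colon V_X(\cF/K) \to \Gamma_\Sigma\Gamma_{\cF/K}\Hom_{\cOcFL}(\cEi_{K/L}\cOcFL, T)$ by that of specifying an $\cOcFL$-linear map $\cEi_{K/L}\cOcFL \tensor_{\cOcFK} V_X(\cF/K) \to T$. I take this to be $f$ post-composed with the canonical map $\cEi_{K/L}\cOcFL \tensor_{\cOcFK} V_X(\cF/K) \to V_X(\cF/L)$ read off the adjoint-form L-shaped staircase inside the sub-diagram of $X$ indexed by connected subgroups between $K$ and $L$, as illustrated by the rank $2$ example; concretely, this composes the horizontal structure maps $h^{K'}_{K''}$ with the tensor-inflations up along successive localizations.

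The main obstacle will be to verify that each $\tilde f_K$ actually lands in the prescribed submodule $\Gamma_\Sigma\Gamma_{\cF/K}\Hom_{\cOcFL}(\cEi_{K/L}\cOcFL, T)$ and that the collection $\{\tilde f_K\}_K$ assembles into a bona fide morphism in $\cAt(G)$. The $\Gamma_{\cF/K}$-restriction is automatic because $V_X(\cF/K)$ is a torsion $\cOcFK$-module, so the adjoint map factors through the torsion submodule. The $\Gamma_\Sigma$-restriction follows from the local finiteness built into the horizontal maps of $X$, ensuring that only finitely many intermediate subgroups can contribute nontrivially to the image of any given element. Compatibility of the $\tilde f_K$ with the horizontal and inflation structure maps then reduces, via naturality of the tensor-hom adjunction, to the commutativity of the cochain complexes $C_{G/M}(\cF/M)_X$ under inflation in $M$. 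Once these checks are completed, naturality in $X$ and $T$ is manifest and the triangle identities follow by inspection of the $\cF/L$-component.
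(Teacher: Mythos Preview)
Your overall strategy matches the paper's: specify the counit (identity at $L$) and the unit, and reduce the triangular identities to the Hom--Tensor adjunction. The paper's own proof is terse and does not spell out the unit componentwise; your attempt to do so is in the right spirit, and the observations about $\Gamma_{\cF/K}$ (automatic from torsion of $V_X(\cF/K)$) and $\Gamma_\Sigma$ (from local finiteness of the horizontal maps) are exactly the checks one must make.

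There is, however, a genuine gap in your construction of $\tilde f_K$ when $\dim(K/L)\geq 2$. You assert the existence of a ``canonical map'' $\cEi_{K/L}\cOcFL \tensor_{\cOcFK} V_X(\cF/K) \to V_X(\cF/L)$, obtained by composing horizontal structure maps along an ``L-shaped staircase''. But the data of an object of $\cAt(G)$ supplies horizontal maps only between \emph{adjacent} levels of each cochain complex $C_{G/M}(\cF/M)$, and the cochain-complex condition forces the composite of consecutive differentials, \emph{summed over all intermediate subgroups}, to vanish. A staircase through a chosen flag $K=K_0\supset K_1\supset\cdots\supset K_n=L$ does give a map, but different flags give different maps, so there is no canonical candidate. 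Concretely, in rank~$2$ with $L=1$ and $K=G$: taking $X=\at_1(T)$ and $f=\id$, the map $\tilde f$ must be the identity, so $\tilde f_G$ is the identity on the nonzero module $\at_1(T)(\cF/G)$; neither the ``sum over all $H$'' version (which gives zero) nor any single-$H$ staircase produces this.

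The way out is to build $\tilde f_K$ by induction on $\dim(K/L)$ using the recursive pullback that defines $\Gamma_\Sigma$ (Lemma~\ref{lem:Gamma}). Once $\tilde f_{K'}$ has been specified for all $K'$ with $L\subseteq K'\subsetneq K$, the element $\tilde f_K(x)\in \at_L(T)(\cF/K)$ is pinned down by the pullback description of that module together with the compatibility of $\tilde f$ with the adjacent horizontal maps $h^K_{K'}$ of $X$. This inductive structure is what the paper's laconic appeal to the Hom--Tensor adjunction is implicitly invoking.
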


\begin{proof}
We describe the counit and unit of the adjunction. The triangular
identities follow from those of the Hom-Tensor adjunction. 

The counit $ev_L \at_L(T)\lra T$ is the identity. For the unit
$$X\lra \at_L(V_X(\cF/L)),$$ 
let us suppose that $L$ is of codimension $t$ and that $s\geq
t$. In the display, $H$ runs through subgroups of codimension $s+1$
containing $L$ and $K$ runs through subgroups of $H$ containing $L$
with  codimension $s$ in $G$.
$$\xymatrix{
\bigoplus_{H}  \at_L(T)(\cF/H) 
\ar@{=}[d] \rto  &
\bigoplus_{K}  \at_L(T)(\cF/K) 
\ar@{=}[d] \\
\bigoplus_{H\geq L} \cEi_{H/K}\cOcFL\tensor_{\cOcFH} \Gamma_{\Sigma}\Gamma_{\cF/H} \Hom_{\cOcFL}(\cEi_{H/L} \cOcFL, T) 
\rto  &
\bigoplus_{K\geq L}  \Gamma_{\Sigma}\Gamma_{\cF/L} \Hom_{\cOcFL}(\cEi_{K/L} \cOcFL, T) 
}$$
\end{proof}

\subsection{The torsion subfunctor}
Certain limit constructions do not preserve torsion objects, or do not
preserve the property of mapping into the sum of components, so it is
useful to formalize  a bigger category in which the constructions can
be made, together with a right adjoint for returning to $\cAt(G)$.

\begin{defn}
 The category $\cAhatt(G)$, has objects $X$ consisting 
collections $\{ V(\cF/K)\}_K$ with $V(\cF/K)$ an $\cOcFK$-module with 
structure maps
$$h^K_L: \cEi_{K/L}\cOcFL \tensor_{\cOcFK} V(\cF /K)\lra V(\cF/L).$$
\end{defn}

Taking components gives an obvious inclusion functor
$$i: \cAt(G)\lra \cAhatt(G). $$

\begin{lemma}
  \label{lem:Gamma}
The inclusion $i$ has a right adjoint $\Gamma: \cAhatt(G)\lra \cAt(G)$. 
\end{lemma}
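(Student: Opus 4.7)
The plan is to construct $\Gamma(X)$ as the largest sub-object of $X$, viewed in $\cAhatt(G)$, that already lies in the image of $i$. The two properties that distinguish $\cAt(G)$ inside $\cAhatt(G)$ are (a) every component $V(\cF/K)$ is $\cF/K$-torsion, and (b) for each $K$, the assembled horizontal map takes values in the direct sum $\bigoplus_L V(\cF/L)$ rather than the product, i.e.\ for each $v\in V(\cF/K)$ and each codimension level, $h^K_L(v)\neq 0$ for only finitely many $L$.

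First I would verify that the $\cAhatt(G)$-subobjects of $X$ satisfying (a) and (b) are closed under taking sums: a sum of torsion submodules is torsion, and if $v = v_1 + \cdots + v_n$ with each $v_i$ of finite support under the $h^K_L$, then so is $v$ (because images add and the support is contained in the union). It follows that the sum of all such subobjects is itself of this form, and I define $\Gamma(X)$ to be this maximal sub-object. Functoriality is automatic: for $f : X \to X'$ in $\cAhatt(G)$, torsion elements go to torsion elements, and $h^K_L(f(v)) = f(h^K_L(v))$ shows that support-finite elements go to support-finite elements, so $f$ restricts to $\Gamma(X) \to \Gamma(X')$.

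A concrete recursive description, matching the usage in Definition \ref{defn:atL}, proceeds by induction on $\dim K$. Set $\Gamma(X)(\cF/1) := \Gamma_{\cF/1}V(\cF/1)$; then, assuming $\Gamma(X)(\cF/L)$ has been defined for all $L$ of smaller dimension than $K$, set
$$\Gamma(X)(\cF/K) := \{\, v\in \Gamma_{\cF/K}V(\cF/K) \mid h^K_L(v)\in \Gamma(X)(\cF/L) \text{ for all } L\subsetneq K, \text{ and } \{L : h^K_L(v)\neq 0\} \text{ is finite at each codimension}\,\}.$$
Both defining conditions are stable under addition and $\cOcFK$-scaling, so $\Gamma(X)(\cF/K)$ is an $\cOcFK$-submodule; the restricted structure maps land in the correct components by construction, and each component is torsion, so $\Gamma(X) \in \cAt(G)$.

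For the adjunction, the counit $\varepsilon_X : i\Gamma(X) \to X$ is the inclusion, and the unit $\eta_Y : Y \to \Gamma(iY)$ is the identity (since $iY$ already satisfies (a) and (b), its largest such sub-object is all of $iY$). The universal property then follows: given $f : iY \to X$ in $\cAhatt(G)$ with $Y \in \cAt(G)$, the image $f(iY) \subseteq X$ is a quotient of $Y$, hence inherits (a) and (b), and so is contained in $\Gamma(X)$; this produces the unique factorization $f = \varepsilon_X \circ i(\tilde f)$ through $\Gamma(X)$. The only substantive checks are closure of $\cAt(G)$-subobjects under sum and that the horizontal maps preserve local finiteness on individual elements — both routine bookkeeping with the $h^K_L$ — so I do not expect a serious obstacle.
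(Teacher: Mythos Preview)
Your proof is correct and takes essentially the same approach as the paper: both first pass to torsion submodules and then impose the sum/local-finiteness condition recursively on the dimension of $K$, with your explicit element-level description playing the role of the paper's inductive pullback square for $\Gamma_\Sigma$. Your additional framing of $\Gamma(X)$ as the maximal $\cAhatt(G)$-subobject lying in the image of $i$ is a clean way to see the adjunction directly, but the underlying construction is the paper's $\Gamma = \Gamma''\Gamma'$.
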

\begin{proof}
  The functor $\Gamma$ is the composite $\Gamma=\Gamma''\Gamma'$.
  The functor $\Gamma'$ takes the torsion submodule  $\Gamma_{\cF/K}
  V(\cF/K)$ at $K$. This is compatible with the structure maps, since any
  $\cF/K$-torsion element is $\cF/L$-torsion for $L\leq K$. 

The functor $\Gamma''$ is defined by taking $\Gamma_\Sigma$ at each
point, where $\Gamma_\Sigma$  can be defined by induction on the
dimension of $H$ using the pullback square 
 $$\xymatrix{
 \Gamma_{\Sigma}\Gamma_{\cF/H}
 \Hom_{\cOcFL}(\cEi_{H/L}\cOcFL , T)\dto \rto &
 \Hom_{\cOcFK}(\cEi_{H/K}\cOcFK, 
 \bigoplus_{K\leq H}
 \Gamma_\Sigma \Gamma_{\cF/K}\Hom_{\cOcFL}(\cEi_{K/L}\cOcFL , T))\dto \\
 \Hom_{\cOcFL}(\cEi_{H/L}\cOcFL , T) \rto &
 \Hom_{\cOcFK}(\cEi_{H/K}\cOcFK,  \prod_{K\leq H}
 \Hom_{\cOcFL}(\cEi_{K/L}\cOcFL , T)) 
 }$$
\end{proof}

In particular this lemma formalizes Definition \ref{defn:atL}.

\section{Injectives}
\label{sec:inj}
We need to show there are enough injectives and describe them in
a way that allows us to do computations. Since $\at_K$
is a right adjoint, it is obvious that, for any torsion injective
$\cOcFK$-module $I$, the object
$\at_K(I)$ is injective in the torsion model.  Indeed, this is the
main reason for introducing the $\at_K$ construction. 

Next, we know that there are enough injective torsion
$\cOcFK$-modules, which can be constructed as sums of those of the form 
$H_*(BG/\tK)$, where $\tK$ runs through subgroups with identity 
component $K$.

Finally it is both illuminating and convenient to give a more explicit
description of the functors $\at_K(H_*(BG/\tK))$. After some
recollections about Gorenstein rings in Subsections \ref{subsec:GorD}
and \ref{subsec:GorDllp},  we give the description in Subsection \ref{subsec:atK}. 

\subsection{Enough injectives}
\label{subsec:sumsinj}
We wish to  construct enough injectives by taking products of those with
explicit constructions. 

\begin{cor}
\label{cor:prods}
The category $\cAt(G)$ has products. 
\end{cor}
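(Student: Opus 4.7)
The plan is to exploit the adjunction $i \dashv \Gamma$ from Lemma \ref{lem:Gamma}. The obstruction to forming products componentwise inside $\cAt(G)$ itself is twofold: a product of $\cF/K$-torsion $\cOcFK$-modules need not be torsion, and the induced horizontal structure maps a priori land in a product of components rather than the required direct sum. Both problems are precisely what $\Gamma = \Gamma''\Gamma'$ was designed to correct, so the strategy is to form the product in the larger category $\cAhatt(G)$ and then push back into $\cAt(G)$ via $\Gamma$.

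First I would verify that $\cAhatt(G)$ admits all products, computed componentwise. Given a family $\{X_\alpha\}_\alpha$ of objects of $\cAhatt(G)$ with components $V_\alpha(\cF/K)$ and structure maps $h^K_{L,\alpha}$, one sets $(\prod_\alpha X_\alpha)(\cF/K) := \prod_\alpha V_\alpha(\cF/K)$. The structure map
$$
\cEi_{K/L}\cOcFL \tensor_{\cOcFK} \prod_\alpha V_\alpha(\cF/K) \lra \prod_\alpha V_\alpha(\cF/L)
$$
is induced by the canonical map from the tensor product into the product of tensor products, followed by the product of the $h^K_{L,\alpha}$. That this furnishes a product in $\cAhatt(G)$ is a straightforward check of the universal property for $\cOcFK$-modules at each $K$, compatible with structure maps.

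Next, given a family $\{X_\alpha\}$ in $\cAt(G)$, I would define their product by
$$
\prod_\alpha X_\alpha \; := \; \Gamma\!\left(\prod_\alpha i(X_\alpha)\right),
$$
where the product on the right is taken in $\cAhatt(G)$. The universal property follows from fully faithfulness of $i$ (immediate from the definition of morphisms in both categories, since a morphism in $\cAt(G)$ is nothing but a morphism of underlying systems with structure maps that happens to preserve the torsion/local-finiteness data automatically) together with the adjunction: for any $Y \in \cAt(G)$,
$$
\HomAt\!\left(Y, \Gamma\!\left(\prod_\alpha iX_\alpha\right)\right)
= \Hom_{\cAhatt}\!\left(iY, \prod_\alpha iX_\alpha\right)
= \prod_\alpha \Hom_{\cAhatt}(iY, iX_\alpha)
= \prod_\alpha \HomAt(Y, X_\alpha).
$$

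The only genuinely substantive point in the whole argument is confirming that products exist in $\cAhatt(G)$ and that $i$ is fully faithful; everything else is formal adjoint-nonsense. The work of ensuring the $\cF/H$-torsion condition and the sum-rather-than-product condition on horizontal targets has already been absorbed into Lemma \ref{lem:Gamma}, which is what makes this corollary essentially immediate once that lemma is in hand.
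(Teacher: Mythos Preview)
Your proposal is correct and follows essentially the same approach as the paper: form the componentwise product in $\cAhatt(G)$ and apply the right adjoint $\Gamma$ from Lemma~\ref{lem:Gamma}. The paper's proof is more terse and leaves the verification of the universal property implicit, whereas you spell out the adjunction argument and the fully-faithfulness of $i$; but the underlying idea is identical.
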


\begin{proof}
Given objects $X_i$ of $\cAt(G)$ we wish to say that the $K$-torsion 
component of the product $\prod_i X_i$ is $\prod_i 
V_i(\cF/K)$. This lies in $\cAhatt(G)$, so by Lemma \ref{lem:Gamma} we
may apply the right adjoint 
$\Gamma$ to obtain an object of $\cAt(G). $
\end{proof}

\begin{lemma}
\label{lem:enoughinj}
There are enough injectives which are products of 
injectives of the form $a_K(I)$ for torsion  injective $\cOcFK$-modules $I$. 
\end{lemma}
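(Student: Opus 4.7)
The plan is to embed any object $X\in\cAt(G)$ into a product of the injectives $\at_K(I_K)$ already constructed, one factor for each connected subgroup $K\subseteq G$. Existence of the product is exactly Corollary \ref{cor:prods}, and injectivity of each factor $\at_K(I_K)$ follows from the fact that $\at_K$ is a right adjoint to the exact evaluation functor $\ev_K$, provided $I_K$ is an injective torsion $\cOcFK$-module.

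For each connected subgroup $K$ of $G$, evaluate: $V_X(\cF/K)$ is a torsion $\cOcFK$-module. The category of torsion $\cOcFK$-modules has enough injectives, built as direct sums of the standard injectives $H_*(BG/\tK)$ as $\tK$ ranges over subgroups with identity component $K$; in particular there is a monomorphism $\iota_K:V_X(\cF/K)\hookrightarrow I_K$ into such a sum. By the adjunction $\HomAt(X,\at_K(I_K))=\Hom_{\cOcFK}(V_X(\cF/K),I_K)$ proved above, $\iota_K$ corresponds to a morphism $\tilde\iota_K:X\lra \at_K(I_K)$ in $\cAt(G)$ whose evaluation at $K$ is $\iota_K$. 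Assembling these yields a morphism
$$\tilde\iota:X\lra \prod_K \at_K(I_K).$$

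It remains to show $\tilde\iota$ is a monomorphism, which reduces to checking that each evaluation $\ev_L(\tilde\iota)$ is an injection of $\cOcFL$-modules. The product in $\cAt(G)$ is formed by taking the product in $\cAhatt(G)$ (which is computed componentwise) and then applying the right adjoint $\Gamma$ of Lemma \ref{lem:Gamma}. Since $\Gamma$ sits inside a natural subobject of $\Hom$, evaluation at $L$ of the product is a submodule of $\prod_K \ev_L\at_K(I_K)$, and the $K=L$ coordinate of $\ev_L(\tilde\iota)$ is precisely $\iota_L:V_X(\cF/L)\hookrightarrow I_L$, which is already injective. Consequently $\ev_L(\tilde\iota)$ is injective for every $L$, so $\tilde\iota$ is a monomorphism in $\cAt(G)$, and $\prod_K \at_K(I_K)$ is the required injective hull.

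The one subtle point, which I regard as the main obstacle, is the compatibility of evaluation with the product construction: naively the product is not componentwise because one has to apply $\Gamma$ to return to $\cAt(G)$, so one must verify that the $K$-coordinate map $\ev_L(\prod_K \at_K(I_K))\to \ev_L(\at_L(I_L))=I_L$ is the honest projection. This is true because $\Gamma$ is a right adjoint and hence preserves the product, and because the unit $a_L(I_L)\to a_L(I_L)$ at $L$ is the identity; everything else in the argument is formal manipulation of the adjunction $\ev_K\dashv \at_K$.
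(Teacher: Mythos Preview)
Your argument is correct and follows the same route as the paper: embed each torsion component $V_X(\cF/K)$ into a torsion injective $I_K$, use the $\ev_K\dashv\at_K$ adjunction to lift to maps $X\to\at_K(I_K)$, and take the product to obtain a monomorphism. Two small remarks: the target is merely an injective containing $X$, not an injective \emph{hull}; and your final paragraph is more elaborate than needed, since the universal property of the product already gives projections $\pi_L$ in $\cAt(G)$ with $\pi_L\circ\tilde\iota=\tilde\iota_L$, so evaluating at $L$ immediately yields a map factoring $\iota_L$ without any appeal to how $\Gamma$ interacts with products.
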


\begin{remark}
Exactly as in the rank 1 semifree case (Remark \ref{rem:suminjnotinj}),  an arbitrary sum of
injectives need not be injective. 
\end{remark}

\begin{proof}
For each subgroup $\Kt$, there are enough torsion injective
$H^*(BG/\Kt)$-modules formed as direct sums of
$H_*(BG/\Kt)$. This means that for any $X$ there is a monomorphism
$V(\cF/K) \lra  I(\cF/K))$ where $I(\cF/K)$ is a sum of
$\cOcFK$-modules $H_*(BG/\Kt)$. There is a corresponding map 
$X \lra  \at_K(I(\cF/K))$ monomorphic at $K$, and hence a monomorphism
$$X\lra \prod_k \at_K(I(\cF/K)).$$
%Accordingly, it suffices to prove the statement about
%sums of injectives. 
%For a fixed connected subgroup $K$ and injective $\cOcFK$-modules
%$I_\alpha$,  we have 
%$$\bigoplus_\alpha 
%a_K(I_\alpha)=
%a_K(\bigoplus_\alpha I_{\alpha}). $$
%Since the corresponding result is true for torsion injective
%$\cOcFK$-modules,  the result is true for sums with $K$ fixed. 
% Since there are only finitely many dimensions of subgroups,  it suffices to
% check that if the connected subgroups $K_s$ all have the same
% dimension $d$ and $I_s$ is an injective $\cO_{\cF/K_s}$-module then 
% $\bigoplus_s a_{K_s}(I_s) $ is injective. In fact we claim that the
% natural map 
% $$p: \Hom (X, \bigoplus_s a_{K_s}(I_s) )\lra 
% \prod_s \Hom (X, a_{K_s}(I_s)) ) =
% \prod_s \Hom (V_X(\cF/K_s), 
% I_s),  $$
% whose $s$th component comes by projection onto the $s$th factor
% $a_{K_s}(I_s)$, 
% is an isomorphism. It is clearly a monomorphism since at each level a
% map into a sum is determined by its components. Accordingly, we pick 
% maps $\theta_s : X \lra a_{K_s}(I_s)$ and show that they are
% components of a map $\theta$ mapping into the sum.  We claim that for each
% connected subgroup $H$ of dimension
% $e\geq d$ the map 
% $$\theta' : V(\cF/H)\lra \prod_s a_{K_s}(I_s)(\cF/H), $$
% with components $\theta_s(\cF/H): V_X(\cF/H) \lra a_{K_s}(I_s)(\cF/H)$,  maps into the
% sum. Indeed if $x \in V_X(\cF/H)$ then using the structure maps in
% $X$, only finitely many of the elements $q_s(1\tensor x)\in
% V(\cF/K_s)$ are nonzero (by definition of $\cAt(G)$), and hence
% $\theta_s(x)$ is only nonzero for these values of $s$.
\end{proof}

\subsection{Gorenstein duality}
\label{subsec:GorD}
One particular case of the $\at_K$ construction is especially 
important: the one supplying enough injectives. Indeed, we know that 
there are enough injective torsion $\cOcFK$-modules of the form 
$H_*(BG/\tK)$, where $\tK$ runs through subgroups with identity 
component $K$. It is illuminating to have a description of the entries 
in $\at_K(H_*(BG/\tK))$ in simple terms. We are able to do this using 
the fact that polynomial rings are Gorenstein. The basic idea is very 
simple, but there are some issues to highlight along the way.

%We are particularly interested in polynomial rings $H^*(BG/K)$, and
%their localizations $\cEi_{H/K}H^*(BG/K)$.

The ring $H^*(BG)$ is Gorenstein and 
$$H^*_{\fm}(H^*(BG))=H^r_{\fm}(H^*(BG))\cong \Sigma^r\Hom_k(H^*(BG), 
k)=\Sigma^r H_*(BG). $$
There are two notable things about this. First, 
the isomorphism is not natural in the sense that it should be twisted
by the determinant in order to be natural for ring isomorphisms. Second, the $\Hom_k$ consists of {\em graded} maps. If
we used all maps we would obtain a completion of $H_*(BG)$, and we
would need to pass to cellularizations to recover $H_*(BG)$ itself. 

We need something  more general. For a Gorenstein local
ring $R$ of dimension $r$ we have 
$$H^*_{\fm}(R)=H^r_{\fm}(R)\cong E_R(k)$$
where $E_R(k)$ is the injective hull of the residue field $k$. If $R$
happens to be a $k$-algebra, we may form $\Ehat_R(k)=\Hom_k(R,k)$, and
the map $k=\Hom_k(k,k)\lra \Hom_k(R,k)=\Ehat_R(k)$ extends to an
embedding $E_R(k)\subseteq \Ehat_R(k)$. However we note that if $R$ is
of countably infinite dimension as a $k$-vector space $\Ehat_R(k)$ will be of
uncountable dimension, whilst $E_R(k)$ will be of countable
dimension, so $E_R(k)\neq \Ehat_R(k)$ in general. 

Accordingly the best we can hope for is that we have a monomorphism 
$$H^r_{\fm}(R)\lra \Hom_k(R,k)=\Ehat_R(k)$$
with the image being a copy of $E_R(k)$. When $R$ is polynomial, we can
recover this since we have a residue map 
$$\res: H^r_{\fm}(R)\tensor \Omega^r_{R/k}\lra k, $$
where $\Omega^r_{R/k}$ is the module of K\"ahler differentials, with
$\Omega^r_{R/k}\cong R$.  This 
gives rise to 
$$H^*_{\fm}(R)=H^r_{\fm}(R)\lra \Hom_k(\Omega^r_{R/k}, 
k)\cong \Ehat_R(k) $$

\subsection{Gorenstein duality for linear-localized polynomial rings}
\label{subsec:GorDllp}

Our situation has all the features described in Subsection
\ref{subsec:GorD}
but is not quite standard since
the rings are not local. For a connected subgroup $K$ of codimension $s$, we need to consider the ring
$R=\cEi_KH^*(BG)$. 

We think of $H^*(BG)$ as polynomial functions on the affine space
$TG=\spec(H^*(BG))$. Corresponding to the short exact sequence 
$$1\lra K \lra G \lra G/K\lra 1$$ 
we have maps 
$$H^*(BK)\lla H^*(BG)\lla H^*(BG/K)$$
and a fibration 
$$TK \lra TG \lra TG/K. $$
The ring $H^*(BG/K)$ is  thus naturally a
subring of $H^*(BG)$. Choosing particular degree $-2$ generators
$x_i$, we have $H^*(BG/K)=k[x_1, x_2, \ldots , x_s]$ and the kernel of
restriction to $K$ is the ideal $\fm_{G/K}=(x_1, \ldots ,x_s)$. 

To go further we choose a splitting.  If $H^*(BK) = k[y_1, y_2, \ldots
, y_t]$, we have $H^*(BG)\cong k[x_1, \ldots , x_s, y_1, \ldots
, y_t]$.   The linear forms in $\cE_K$ are  precisely those forms
involving some $y_i$ (or, intrinsically, those not in the image of 
$H^*(BG/K)$).  We therefore have a natural map $\cEi_KH^*(BK)\lla
\cEi_K H^*(BG)$  which 
plays the role of the map from $R$ to its residue field. 

Since $K$ is connected, we may choose a splitting of the inclusion  $K
\lra G$ and hence make  $R=\cEi_KH^*(BG)$  an algebra over 
$\vark = \cEi_KH^*(BK)$. 

The ring  $R$ consists of meromorphic functions on $T G$ regular on 
$TK$ but with denominators that are products of linear
forms. 

\begin{lemma}
\label{lem:GorD}
We have a natural embedding
$$H^*_{\fm_{G/K}}(\cEi_K H^*(BG))=H^{s}_{\fm_{G/K}}(\cEi_K H^*(BG))\lra 
\Sigma^{2s}\Hom_k(\cEi_K H^*(BG), 
k),  $$
giving an isomorphism
$$H^{s}_{\fm_{G/K}}(\cEi_K H^*(BG))\lra 
\Sigma^{2s}\Gamma_{\cF /K}\Hom_{\vark} (\cEi_K H^*(BG), 
\vark ),  $$
\end{lemma}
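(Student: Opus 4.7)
I would split the argument into three steps: concentration of $H^*_{\fm_{G/K}}(R)$ in degree $s$, construction of the natural embedding via a residue pairing, and identification of the image with the $\cF/K$-torsion submodule. Write $R=\cEi_K H^*(BG)$. Using the splitting $G\cong K\times G/K$ (available because $G$ is a torus and $K$ is a connected subgroup) we have $H^*(BG)\cong k[x_1,\ldots,x_s,y_1,\ldots,y_t]$ with $\fm_{G/K}=(x_1,\ldots,x_s)$. The sequence $x_1,\ldots,x_s$ is regular in the polynomial ring, and since localization is flat and preserves non-zero-divisors, it remains a regular sequence of length $s$ in $R$. The stable Koszul complex $K^\infty(x_1,\ldots,x_s;R)$ therefore has cohomology concentrated in degree $s$, with
\[
H^s_{\fm_{G/K}}(R)\;\cong\;R[(x_1\cdots x_s)^{-1}]\Big/\sum_{i=1}^s R[(x_1\cdots\widehat{x}_i\cdots x_s)^{-1}].
\]

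For the embedding, I would begin with the classical residue on the polynomial subring $k[x_1,\ldots,x_s]\subset R$. This is a map $\res_0:H^s_{\fm_{G/K}}(k[x])\to \Sigma^{2s}k$ picking out the coefficient of $(x_1\cdots x_s)^{-1}$, whose adjoint is the Gorenstein duality isomorphism $H^s_{\fm_{G/K}}(k[x])\cong \Sigma^{2s}\Hom_k(k[x],k)$. Since $R$ is flat over $k[x]$ (as $R$ is a localization of $k[x]\otimes_k k[y]$, and both the tensor product and localization are flat), base change gives $H^s_{\fm_{G/K}}(R)\cong H^s_{\fm_{G/K}}(k[x])\otimes_{k[x]}R$. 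Combining the $R$-action on $H^s_{\fm_{G/K}}(R)$ with the residue then produces the pairing
\[
R\otimes_k H^s_{\fm_{G/K}}(R)\longrightarrow H^s_{\fm_{G/K}}(R)\stackrel{\res}{\longrightarrow}\Sigma^{2s}\vark\longrightarrow \Sigma^{2s}k,
\]
whose adjoint is the desired embedding $H^s_{\fm_{G/K}}(R)\hookrightarrow \Sigma^{2s}\Hom_k(R,k)$. Injectivity follows because $R$ is dense in its $\fm_{G/K}$-adic completion $\hat R\cong \vark[[x_1,\ldots,x_s]]$, so the functionals dual to the monomial basis $\{x^a\}$ separate points of $H^s_{\fm_{G/K}}(R)\cong E_{\hat R}(\vark)$.

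The main obstacle is the identification with $\Gamma_{\cF/K}\Hom_{\vark}(R,\vark)$. Keeping the residue $\vark$-valued yields a map into $\Sigma^{2s}\Hom_{\vark}(R,\vark)$ whose image lies in the $\cF/K$-torsion submodule: every element of $H^s_{\fm_{G/K}}(R)$ is killed by some power of $\fm_{G/K}$, and since each $x_i$ is itself the Euler class of a character for the coordinate subcircle $H_i$, $\fm_{G/K}$-torsion implies $\cF/K$-torsion. Conversely, any $\cF/K$-torsion $\vark$-linear $\phi:R\to\vark$ is annihilated by some $\fm_{G/K}^n$, hence extends continuously to $\hat R\cong\vark[[x_1,\ldots,x_s]]$; by Matlis duality for the graded complete regular local ring $\hat R$, such cofinite-support functionals correspond bijectively to elements of the injective hull $E_{\hat R}(\vark)=H^s_{\fm_{G/K}}(\hat R)=H^s_{\fm_{G/K}}(R)$. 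The delicate point is that the image consists of the \emph{sum} (= locally-finite or torsion) part of $\Hom_{\vark}(R,\vark)$ rather than the full product of dual functionals, which is exactly the content of graded Matlis duality in this setting.
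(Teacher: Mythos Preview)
Your outline is close in spirit to the paper's argument---both hinge on the residue pairing and on identifying $H^s_{\fm_{G/K}}(R)$ with the $\fm_{G/K}$-power torsion in the $\vark$-dual---but you package the surjectivity step via completion and Matlis duality, whereas the paper argues directly with the negative-monomial basis and the filtration by $\ann(\fm_{G/K}^a,\,\cdot\,)$.

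There is one genuine gap. Your argument rests on the assertion $\hat R\cong \vark[[x_1,\ldots,x_s]]$, equivalently $R/\fm_{G/K}^n R\cong \vark[x]/\fm_{G/K}^n$, and this is precisely the nontrivial point. A priori $R=\cEi_K k[x,y]$ is \emph{not} free over $\vark$ on the $x$-monomials (for instance $1/(x_1+y_1)$ is not a finite $\vark$-combination of such), so one cannot simply read off the completion. What makes it work is the observation the paper isolates: modulo $\fm_{G/K}^n$ every element of $\cE_K$ is a unit plus a nilpotent, so on $\fm_{G/K}$-torsion objects inverting $\cE_K$ agrees with inverting $\cE'_K=\cE_K\cap H^*(BK)$. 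Once you state and prove this one line, your computation of $\hat R$ (and equivalently the free $\vark$-module structure on $H^s_{\fm_{G/K}}(R)$) follows, and the remainder of your argument goes through. Without it, both the ``monomial basis'' you appeal to for injectivity and the identification $H^s_{\fm_{G/K}}(R)\cong E_{\hat R}(\vark)$ are unjustified.

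A smaller point: $\hat R$ is not a ``graded complete regular local ring'' in the standard sense, because $\vark=\cEi_K H^*(BK)$ is not a field (over $\Q$, for example, $y_1^2+y_2^2$ is not a product of linear forms and hence not a unit in $\vark$). Your duality argument still works, but it is a relative Matlis-type statement over the base $\vark$, not classical Matlis duality; the paper avoids this subtlety by working directly with the explicit free $\vark$-module structure and an induction on the annihilator filtration.
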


\begin{proof}
Choose a splitting  $H^*(BG)\cong H^*(BG/K)\tensor
H^*(BK)$, and note that 
$$H^*_{\fm_{G/K}}(H^*(BG))=H^*_{\fm_{G/K}}(H^*(BG/K))\tensor H^*(BK). $$
As in Subsection \ref{subsec:GorD}
$H^*_{\fm_{G/K}} (H^*(BG/K)) =H^{s}_{\fm_{G/K}} (H^*(BG/K))$
has a basis consisting of monomials $x_1^{i_1}x_2^{i_2}\cdots
x_{s}^{i_{s}}$ with all exponents negative. Thus the top degree basis element is
$(x_1x_2\cdots x_s)^{-1}$  and  the map  to $\Hom_k(H^*(BG/K), k)$ 
is given by residues. This gives an isomorphism
$$H^*_{\fm_{G/K}}(H^*(BG/K))\cong \Gamma_{\fm_{G/K}}\Hom_k(H^*(BG/K), 
k). $$
With our usual convention of taking graded Hom, the
$\Gamma_{\fm_{G/K}}$ could be omitted since $H^*(BG/K)$ is finite
dimensional in each degree. 
Now tensor this with $H^*(BK)$ and localize:
\begin{multline*}
\cEi_K H^*_{\fm_{G/K}}(H^*(BG))=
\cEi_K \left [ H^*(BK)\tensor H^*_{\fm_{G/K}}(H^*(BG/K))\right] \cong \\
\cEi_K \left[ H^*(BK)\tensor \Gamma_{\fm_{G/K}}\Hom_k(H^*(BG/K), 
  k)\right]
\end{multline*}

We continue to write $\cE_K\subseteq H^*(BG)$ for the multiplicatively
closed set of $K$-essential representations of $G$. Having chosen a
splitting of $K\lra G$, we need a different notation for those arising
from representations of $K$, so write $\cE'_K=\cE_K\cap H^*(BK)$.
Now if $M$ is an $\fm_{G/K}$-power torsion module then  inverting
$\cE_K$ is the same as inverting $\cE'_K$ (indeed, if
$u$ is invertible and $x\in \fm_{G/K}$ then $u+x$ is invertible).
Accordingly, $\cEi_K H^*_{\fm_{G/K}}(H^*(BG))$ is a free module over
$k_K=\cEi_K H^*(BK)$ on the monomial basis of negative powers above.
Thus $\cEi_K H^*_{\fm_{G/K}}(H^*(BG))$ embeds in
$\Hom_{k_K}(\cEi_K H^*(BG), k_K) $.

Now consider the $\Gamma_{\cF /K}$-torsion in a module $M$. It has a
filtration
$$0\subseteq \ann (\fm_{G/K}, M)\subseteq \ann (\fm_{G/K}^2,
M)\subseteq \ann (\fm_{G/K}^3, M) \subseteq \cdots   \subseteq \Gamma_{\cF/K}M.$$
The subquotients are  modules over $\cEi_K H^*(BG)/\fm_{G/K}=\cEi_K
H^*(BK)$.

Taking $M=\Hom_{k_K}(\cEi_KH^*(BG), k_K)$ we prove by induction on $a$
that $\ann (\fm_{G/K}^{a+1}, M)/\ann (\fm_{G/K}^{a}, M)$ is a free
module on the negative monomials of total degree $-s-a$.  The
monomials are independent, so it remains to show they span. We know
$\cEi_K H^*(BG)$ is free over $k_K$ on the monomials, so we need only
observe that a function annihilated by $\fm_{G/K}^a$ is zero on
monomials of degree $\geq a+1$. 

\end{proof}

\subsection{Explicit description of generating injectives}
\label{subsec:atK}

Using the work of the last two subsections we can give an 
attractive description of $\at_L(H_*(BG/\Lt))$ showing that it
captures global geometry. 

\begin{lemma}
\label{lem:injasloccoh}
For a subgroup $\Kt$, the  torsion part of  $\at_L(H_*(BG/\Lt))$ is given by 
$$V(G/\Kt)=\dichotomy
{\Sigma^{-2\dim(G/K)}H^{\dim(G/K)}_{\fm_{G/\Kt}}(\cEi_{K/L}H^*(BG/\Lt))  & \mbox{ if
    $\Lt$ is cotoral in $\Kt$ }} 
{ 0 & \mbox{ otherwise }} $$
\end{lemma}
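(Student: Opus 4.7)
The plan is to unfold Definition \ref{defn:atL} at $V(G/\Kt)$ and then apply the Gorenstein duality of Lemma \ref{lem:GorD} to convert the resulting $\Hom$ into local cohomology. First I would dispose of the vanishing case: if $K$ does not contain $L$ the formula in Definition \ref{defn:atL} gives $0$ immediately. If $K\supseteq L$ but $\Lt$ is not cotoral in $\Kt$, then the idempotent $e_{\Kt}\in\cOcFK$ acts on the $\cOcFL$-module $H_*(BG/\Lt)$ through the inflation $\cOcFK\to\cOcFL$, under which $e_{\Kt}$ expands as the sum of $e_{\Lt'}$ over $\Lt'$ cotoral in $\Kt$. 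Since $H_*(BG/\Lt)$ is supported on the single idempotent $e_{\Lt}$, this shows that $e_{\Kt}$ acts as zero on $H_*(BG/\Lt)$ unless $\Lt$ itself is cotoral in $\Kt$.

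When $\Lt$ is cotoral in $\Kt$, the idempotent reduction collapses the product defining $\cOcFL$ to the single factor $H^*(BG/\Lt)$, giving
$$V(G/\Kt) \;=\; \Gamma_\Sigma \Gamma_{\cF/K} \Hom_{H^*(BG/\Lt)}\bigl(\cEi_{K/L} H^*(BG/\Lt),\, H_*(BG/\Lt)\bigr).$$
Using $H_*(BG/\Lt)=\Hom_k(H^*(BG/\Lt),k)$ and tensor--hom adjunction, this becomes $\Gamma_\Sigma\Gamma_{\cF/K}\Hom_k(\cEi_{K/L}H^*(BG/\Lt),k)$. At this point I would invoke Lemma \ref{lem:GorD} with $G$ replaced by $G/\Lt$ and $K$ replaced by the torus $K/\Lt$ of codimension $s=\dim(G/K)$. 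The lemma identifies $H^s_{\fm_{G/\Kt}}(\cEi_{K/L}H^*(BG/\Lt))$ with $\Sigma^{2s}\Gamma_{\cF/K}\Hom_{k_{K/\Lt}}(\cEi_{K/L}H^*(BG/\Lt),k_{K/\Lt})$. A degree-by-degree comparison then shows that the natural inclusion of this latter group into $\Gamma_{\cF/K}\Hom_k(\cEi_{K/L}H^*(BG/\Lt),k)$ (restrict scalars and postcompose with a residue $k_{K/\Lt}\to k$) is an isomorphism, because $\fm_{G/\Kt}$-power torsion elements have support bounded in the Laurent direction. Rearranging the suspension yields the $\Sigma^{-2s}$ in the statement.

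The remaining bookkeeping is the $\Gamma_\Sigma$ factor, which I expect to act trivially on this component: the horizontal structure maps out of the $\Kt$-piece of $\at_L$ reach only the cotoral refinements of $\Kt$, and a graded-degree argument shows that only finitely many contribute below any fixed dimension, so the image automatically lies in the sum rather than the product. The main obstacle is the comparison of the two flavours of $\Hom$ (over $k$ landing in $k$ versus over $k_{K/\Lt}$ landing in $k_{K/\Lt}$) after passing to $\fm_{G/\Kt}$-torsion; this mirrors and extends the residue argument used in the proof of Lemma \ref{lem:GorD}, and requires some care in handling the inverted linear forms.
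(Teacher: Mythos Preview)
Your reduction to $\Gamma_\Sigma\Gamma_{\cF/K}\Hom_k(\cEi_{K/L}H^*(BG/\Lt),k)$ is correct, but the step where you claim that the inclusion of $\Gamma_{\cF/K}\Hom_{k_{K/\Lt}}(\cEi_{K/L}H^*(BG/\Lt),k_{K/\Lt})$ into $\Gamma_{\cF/K}\Hom_k(\cEi_{K/L}H^*(BG/\Lt),k)$ is an isomorphism fails whenever $\dim(K/L)\geq 2$. An element annihilated by $\fm_{G/\Kt}^a$ factors through $\cEi_{K/L}H^*(BG/\Lt)/\fm_{G/\Kt}^a$, which is free of finite rank over $k_{K/\Lt}$; so the comparison collapses to that between $k_{K/\Lt}$ and its graded $k$-dual. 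When $K/\Lt$ has rank at least $2$ the ring $k_{K/\Lt}$ is infinite dimensional over $k$ in every degree (already in degree $0$ the elements $y_1/y_2,\,(y_1/y_2)^2,\ldots$ are independent), so $\Hom_k(k_{K/\Lt},k)$ is strictly larger than $k_{K/\Lt}$. Your ``support bounded in the Laurent direction'' intuition is valid only in the rank $1$ case.

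The paper's argument does not attempt this comparison. It accepts that $\Gamma_{\cF/K}\Hom_k$ is too big and shows instead that $\Gamma_\Sigma$ is precisely what trims it down to the image of local cohomology. The point is that an element $\lambda/e(V)\otimes f/x_1^{i_1}\cdots x_s^{i_s}$ of $H^s_{\fm_{G/\Kt}}(\cEi_{K/L}H^*(BG/\Lt))$ has a denominator $e(V)$ with only finitely many linear factors, and its image under the structure map to a codimension-one $K'\subset K$ is nonzero only when one of those factors lies in the kernel of restriction to $K'$; hence only finitely many $K'$ receive a nonzero contribution and the element lands in the direct sum. Thus $\Gamma_\Sigma$ is not dismissible bookkeeping: it is doing the work you tried to push onto the torsion functor alone, and your ``graded-degree'' justification for its triviality does not hold for general elements of $\Gamma_{\cF/K}\Hom_k$.
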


\begin{proof}
The vanishing is clear. Let $s=\dim (G/K)$. By Lemma \ref{lem:GorD},
we have an isomorphism 
$$H^{s}_{\fm_{G/K}}(\cEi_K H^*(BG/\Lt))\stackrel{\cong}\lra 
\Sigma^{2s}\Gamma_{\cF/K}\Hom_{\vark} (\cEi_{K/L} H^*(BG/\Lt), 
\vark ).  $$
The result follows form  Definition \ref{defn:atL}, once we take
account of the effect of    $\Gamma_\Sigma$. 

First, we observe that the embedding maps into $\Gamma_\Sigma$.
We have already seen that 
an element $h=\lambda/e(V)\tensor f/x_1^{i_1} x_2^{i_2}\cdots x_s^{i_s}$
automatically maps to a torsion element. It remains to observe that it is only nonzero
for finitely many $L$. Indeed, if $L$ is of codimension 1 in  $K$ and
$\Lt$ is cotoral in $\Kt$ then  $H^*(BG/\Lt)=k[x_1, \ldots , x_s, z]$
for some independent linear form $z$. Then $h$ will only map to a
nonzero element if it is not regular, and this only happens if $z$
occurs amongst the finitely many linear factors of  $e(V)$.
Since the map is surjective by Lemma \ref{lem:GorD} it follows that
$\Gamma_\Sigma$ is the identity on this object. 
%Finally we need to see that every element of $\Gamma_\Sigma
%\Gamma_{\cF/K}\Hom_{\vark} (\cEi_{K/L} H^*(BG/\Lt), k_K) $
%lies in the image. We simplify notation by taking $\Lt=1$, and choose
%Euler classes so $H^*(BG/K)=k[x_1, ...., x_s]$.  We must show that a $k_K$-map $f:
%\cEi_{K/L}H^*(BG)\lra k_K$ which lies in $\Gamma_\Sigma
%\Gamma_{\cV/K  $ is only non-zero on the basis of negative monomials
 % in $x$, and then on only finitely  many of those.  For the first
 % point, we note that if $f(a)\neq 0$ and $a$ has a denominator $x+u=e(\alpha)$
 % with $x $ a non-zero linear form in $x_1, \ldots , x_s$ then $f$
 % would be nontrivial at $K'$ whenever $u$ is non-zero on $K'$
\end{proof}

The structure maps are given by the relative residue maps. 

\begin{lemma}
For cotoral inclusions $\Ht\supseteq \Kt \supseteq \Lt$ the structure
map 
\begin{multline*}
h^{\Ht}_{\Kt}: \cEi_{H/L}H^*(BG/\Lt )\tensor
\Sigma^{-2\dim(G/H)}H^{\dim (G/H)}(\cEi_{H/L}H^*(BG/\Lt))\\
\lra 
\Sigma^{-2\dim(G/K)}H^{\dim(G/K)}(\cEi_{K/L}H^*(BG/\Lt))
\end{multline*}
is given by the relative residue.\qqed 
\end{lemma}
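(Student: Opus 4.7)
The plan is to trace the structure map of $\at_L(H_*(BG/\Lt))$ from its intrinsic adjoint Hom-description through the Gorenstein duality isomorphism of Lemma \ref{lem:GorD} and Lemma \ref{lem:injasloccoh}, and identify the transported map with the classical relative residue.

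First, I would make the map $h^{\Ht}_{\Kt}$ explicit in the adjoint form. By Definition \ref{defn:atL} and the description of the adjoint form, before applying the local cohomology identification, the map is induced by the evident pairing
$$\cEi_{H/L}\cOcFL \tensor \Gamma_\Sigma\Gamma_{\cF/H}\Hom_{\cOcFL}(\cEi_{H/L}\cOcFL, H_*(BG/\Lt)) \lra \Gamma_\Sigma\Gamma_{\cF/K}\Hom_{\cOcFL}(\cEi_{K/L}\cOcFL, H_*(BG/\Lt))$$
sending $f\tensor \phi \mapsto (g\mapsto \phi(fg))$, using the factorization $\cEi_{K/L}\cOcFL \cong \cEi_{K/H}\cOcFL \tensor_{\cOcFL} \cEi_{H/L}\cOcFL$ which follows from the Euler multiplicativity $\cE_{K/L}$ being generated by $\cE_{H/L}$ and $\cE_{K/H}$. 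This description is forced by the adjunction, since $\at_L$ was constructed as a right adjoint to evaluation.

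Second, I would transport this pairing across the Gorenstein duality isomorphism. Choose a compatible splitting $\Lt \subseteq \Kt \subseteq \Ht \subseteq G$, giving coordinates which express $\cEi_{H/L}H^*(BG/\Lt)$ as polynomial in the $\dim(G/H)$ linear forms transverse to $\Ht$ (inverted) over the residue ring $\vark$, and similarly for $\Kt$. By Lemma \ref{lem:GorD} both sides are then identified with the spaces of graded residue functionals on these rings, i.e.\ sums of negative monomial classes $r/(x_1^{i_1}\cdots x_s^{i_s})$ paired against polynomials by extracting the top coefficient.

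Third, I would verify the transported pairing is the relative residue. For a monomial class $\phi$ and a multiplier $f \in \cEi_{H/L}\cOcFL$, the operation $g\mapsto \phi(fg)$ on elements of $\cEi_{K/L}\cOcFL$ decomposes as: perform the residue in the $\dim(H/K)$ coordinates transverse to $\Kt$ inside $\Ht$, then interpret the remaining functional as an element of $H^{\dim(G/K)}_{\fm_{G/\Kt}}(\cEi_{K/L}H^*(BG/\Lt))$. This is by definition the relative residue along the flat extension. The main obstacle is the bookkeeping in this last step: one must confirm that the chain rule (transitivity) for residues along the two-step extension $\Lt \subseteq \Kt \subseteq \Ht$ matches the adjunction-induced pairing, and that the $\Gamma_\Sigma\Gamma_{\cF/H}$-corrections and Euler-class denominators from the adjoint form interact cleanly with the local cohomology identification so that no spurious twist is introduced.
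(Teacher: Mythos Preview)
The paper gives no proof of this lemma at all: the \verb|\qqed| is attached directly to the statement, so it is asserted without argument. Your proposal therefore supplies strictly more than the paper does, and the outline you give---unwinding the adjoint-form description of $\at_L(H_*(BG/\Lt))$ from Definition~\ref{defn:atL}, transporting the evaluation pairing across the Gorenstein duality isomorphism of Lemma~\ref{lem:GorD}, and then recognising the resulting map on monomial classes as the relative residue---is exactly the natural way to fill in the details. Your identification of the adjoint structure map as $f\tensor\phi\mapsto(g\mapsto\phi(fg))$ is correct and is the only real content; once both sides are written in the monomial basis of negative-exponent classes over the appropriate $k_K$, this pairing is visibly the relative residue (extract the coefficient of the top negative monomial in the $H/K$-directions, leaving a class in the $G/K$-local cohomology). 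The bookkeeping you flag concerning $\Gamma_\Sigma$ and $\Gamma_{\cF/H}$ is not a genuine obstacle here, since Lemma~\ref{lem:injasloccoh} already shows these functors act as the identity on the objects in question.
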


% \subsection{Warwick duality}
% {\bf [[Delete this??]]}
% A special case of Warwick duality 
%  \cite[4.1]{tateca}, \cite{axiomatic} plays a significant role here. 
% The general setting is that we have a ring $R$ and 
% a smashing localization $L$, giving a triangle 
% $$\Gamma X \lra X \lra LX. $$
% Defining the auxiliary functors 
% $$\Lambda X=\Hom (\Lambda R, X), VX=\Hom (LR, X),$$
% Warwick duality is the equivalence 
% $$L\Lambda X \simeq \Sigma V\Gamma X. $$
% In our case we take $R=\cOcFL$ and $LX=\cEi_{K/L}X$., 
% (i.e., localizing at the subvariety $TK/L$ of $TG/L$). 

% \begin{lemma}
% \label{lem:Warwick}
% For a torsion $\cOcFL$-module $T$ we have an equivalence
% $$\Hom_{\cOcFL} (\cEi_{K/L}\cOcFL, T)\simeq \Sigma^{-2} \cEi_{K/L}\Hom_{\cOcFL} ((\cEi_{K/L}\cOcF)/\cOcF 
% , T). $$
% \end{lemma}

% \begin{proof}
% Since Euler classes are regular elements of $\cOcFL$, we have  a short exact sequence 
% $$0\lra  \cOcFL \lra \cEi_{K/L}\cOcFL\lra \Sigma \Gamma \cOcF\lra 0
% . $$
% Thus Warwick duality gives the equivalence 
% $$\Hom_{\cOcFL} (\cEi_{K/L}\cOcFL, \Gamma M )\simeq \Sigma^{-2}
% \cEi_{K/L}\Hom_{\cOcFL} ((\cEi_{K/L}\cOcF)/\cOcF , M) $$
% for an arbitary $\cOcFL$-module $M$.
 
% Since $T$ is $\cOcFL$-torsion, $\cEi_{K/L}T=0$, and $\Gamma T=T$, and
% we obtain the stated equivalence.
% \end{proof}

% We note that the above equivalence does not generally give an
% isomorphism in the module category, since one derived functor may be
% nonzero in general. 

\section{Injective dimension}
\label{sec:id}

For the {\em convergence} of the Adams spectral sequence it is 
enough to know $\cAt(G)$ has finite injective dimension. This is an
easy consequence of the finite injective dimension of torsion modules
over a polynomial ring, and the proof is given in Proposition \ref{prop:id}.
 For the {\em  use} of the Adams Spectral Sequence  it is enough to be able to  
calculate with $\cAt(G)$, and for this purpose one does not need to
know the exact injective dimension either,  so it will suffice to 
give a finite upper bound.

%the  the impatient reader need only
%look at Lemma \ref{lem:fid}
% However it would be satisfying to know the exact injective dimension. 
%Lemma \ref{lem:fid} gives a quadratic bound ($\leq r(r+3)/2$) on the
%injective dimension. With a little bit of extra work Lemma
%\ref{thm:id} gives a linear bound ($\leq 2r-1$ for $r\geq 2$).
%Both of these are
%proofs by induction,  the
%first showing that the supplement to the injective dimension is $r+1$
%at the $r$th stage, and the second that it is $2$ at each stage. 

%With a little more care one sees that one only needs to add one
%dimension at the final stage, giving the estimate that the injective
%dimension is $\leq r+1$.
%The argument that this is achieved when $r=2$ is just
%like the case $r=1$.
\subsection{An upper bound for the injective dimension}
It is very easy to give an upper bound, simply using the fact that the
injectives $\at_L(I)$ are only non-zero at $K$ when $K$ contains
$L$. Indeed, we may argue by induction on the codimension of support
that an object $X$ with support in codimension $\leq c$ is of finite
injective dimension. This is obvious for $c=0$ since $\at_G(V)$ is
always injective. Supposing  that $X$ has support in codimension $c$
and that objects with support of lower codimension are of finite
injective dimension then we may
construct the start of an injective resolution by focusing on
subgroups $L$ of codimension $c$ and starting an injective resolution
$$0\lra X \lra \bbI_0\lra \bbI_1\lra \cdots \lra  \bbI_c$$
so that for each subgroup $L$, the torsion components $V(\cF/L)$ give
an injective resolution of $V_X(\cF/L)$. The cokernel of
$\bbI_{c-1}\lra \bbI_c$ has support in codimension $\leq c-1$ and it
is therefore of finite injective dimension by induction. This gives a
bound quadratic in $c$. With an additional fact we can get a
linear bound.

\begin{lemma}
  \label  {lem:atIcomponents}
If $I$ is an injective torsion $\cOcFK$-module then the components of 
$\at_K(I)$ are all injective. 
\end{lemma}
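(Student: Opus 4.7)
The plan is to reduce to the case of an indecomposable injective $I \cong H_*(BG/\Kt)$, invoke the explicit description of $\at_K(I)$ supplied by Lemma \ref{lem:injasloccoh}, and recognise each component as a direct sum of indecomposable injective torsion $H^*(BG/\Ht)$-modules via Gorenstein duality. Since $\cOcFK$ splits by idempotents as a product of Noetherian polynomial rings $H^*(BG/\Kt)$, any injective torsion $\cOcFK$-module is a direct sum of copies of the indecomposables $H_*(BG/\Kt)$; the local finiteness enforced by $\Gamma_\Sigma$ in the formula
$$V(\cF/H) = \Gamma_\Sigma\Gamma_{\cF/H}\Hom_{\cOcFK}(\cEi_{H/K}\cOcFK,-)$$
is precisely the condition that converts the $\Hom$ out of the non-finitely-generated module $\cEi_{H/K}\cOcFK$ into a direct sum of $\Hom$s when the target is a direct sum of indecomposables, reducing the question to a single summand.

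For $I = H_*(BG/\Kt)$, Lemma \ref{lem:injasloccoh} identifies the $G/\Ht$-component of $\at_K(I)$ as non-zero precisely when $\Kt$ is cotoral in $\Ht$, with value (up to suspension) $H^{\dim(G/H)}_{\fm_{G/\Ht}}(\cEi_{H/K}H^*(BG/\Kt))$. Choosing a splitting $H^*(BG/\Kt) \cong H^*(BG/\Ht)\tensor H^*(B\Ht/\Kt)$ as in the proof of Lemma \ref{lem:GorD}, local cohomology commutes with the flat base change $H^*(BG/\Ht)\lra H^*(BG/\Kt)$, and on $\fm_{G/\Ht}$-power torsion modules inverting $\cE_{H/K}$ coincides with inverting its ``pure'' subset in $H^*(B\Ht/\Kt)$ (by the unit-plus-nilpotent-is-unit observation used in the proof of Lemma \ref{lem:GorD}). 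Thus the component is identified with $E_{H^*(BG/\Ht)}(k)\tensor \cE^{-1}_{H/K}H^*(B\Ht/\Kt)$, that is, a direct sum of suspended copies of the indecomposable injective $H_*(BG/\Ht)$. Since $H^*(BG/\Ht)$ is Noetherian this sum is injective as a torsion $H^*(BG/\Ht)$-module, and summing over $\Ht$ with identity component $H$ yields the injectivity of $V(\cF/H)$ as a torsion $\cOcFH$-module.

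The hard part will be the reduction step: verifying carefully that $\Gamma_\Sigma$ really converts $\Hom_{\cOcFK}(\cEi_{H/K}\cOcFK,\bigoplus_\alpha I_\alpha)$ into $\bigoplus_\alpha \Gamma_\Sigma\Hom_{\cOcFK}(\cEi_{H/K}\cOcFK,I_\alpha)$ through the inductive pullback definition of $\Gamma_\Sigma$ from Lemma \ref{lem:Gamma}. Morally, a locally finite map into a direct sum has only finitely many nonzero components on any finitely generated piece, but the bookkeeping across all subgroups $L\subseteq H$ simultaneously is where the real work lies.
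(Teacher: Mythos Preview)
Your route is far more laborious than the paper's. The paper observes that the component at $H$ is
\[
\Gamma_\Sigma\,\Gamma_{\cF/H}\,\Hom_{\cOcFK}(\cEi_{H/K}\cOcFK, I),
\]
and simply notes that this is obtained from the injective $I$ by applying three right adjoints in succession: $\Hom_{\cOcFK}(\cEi_{H/K}\cOcFK,-)$ (left adjoint the exact localization-tensor), the torsion subfunctor $\Gamma_{\cF/H}$ (left adjoint the exact inclusion), and $\Gamma_\Sigma$ (left adjoint again an exact inclusion, by Lemma~\ref{lem:Gamma}). Right adjoints with exact left adjoints preserve injectives, so the result is injective. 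No decomposition of $I$ is required.

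Your explicit calculation for $I=H_*(BG/\Kt)$ is correct and genuinely more informative: it identifies the component as a sum of suspensions of $H_*(BG/\Ht)$, which is exactly the content of Lemma~\ref{lem:injasloccoh} combined with Gorenstein duality. That is worth knowing. But the reduction step you flag as ``the hard part'' is both genuinely problematic and entirely unnecessary. Your hope that $\Gamma_\Sigma$ forces
\[
\Hom_{\cOcFK}\bigl(\cEi_{H/K}\cOcFK,\textstyle\bigoplus_\alpha I_\alpha\bigr)
\]
to decompose as $\bigoplus_\alpha \Hom_{\cOcFK}(\cEi_{H/K}\cOcFK,I_\alpha)$ is not justified: $\Gamma_\Sigma$ enforces local finiteness \emph{across subgroups $L\leq H$}, not across an external index set $\alpha$, so there is no mechanism by which it would cut the product down to the sum in the $\alpha$-direction. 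The abstract right-adjoint argument never needs to confront this, because it treats an arbitrary injective $I$ uniformly.
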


\begin{proof}
  We have seen the component at $H$ is zero unless $K\leq H$ and 
  the component at $H$ is 
  $$\Gamma_\Sigma \Gamma_{\cF/H}\Hom_{\cOcFK}(\cEi_{H/K}\cOcFK, I)).$$
  This is formed from the injective $I$ by applying three right 
  adjoints, $  \Hom_{\cOcFK}(\cEi_{H/K}\cOcFK, \cdot )), $
  $\Gamma_{\cF/K}$ and $\Gamma_\Sigma$ so the result is also 
  injective. 
  \end{proof}

  \begin{prop}
    \label{prop:id}
An object $X$ of codimension $c$ has injective dimension $\leq 2c$. In
particular, the injective dimension of $\cAt(G)$ is $\leq 2r$.
\end{prop}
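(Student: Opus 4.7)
The plan is to induct on $c=\codim(\supp X)$, with the bound $\injdim(\cAt(G))\leq 2r$ following as the special case $c=r$. The base case $c=0$ is immediate: the support lies in $\{G\}$, so $X=\at_G(V_X(\cF/G))$ is itself injective (since $\at_G$ is a right adjoint applied to a $\Q$-module), giving $\injdim X = 0 = 2\cdot 0$.

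For the inductive step, assuming the bound for codimension $<c$, I aim to produce a partial injective resolution
$$0 \to X \to \bbI_0 \to \bbI_1 \to C \to 0$$
with $\bbI_0, \bbI_1$ injective in $\cAt(G)$ and $\codim(\supp C)\leq c-1$. Splicing with an injective resolution of $C$ of length at most $2(c-1)$ from the inductive hypothesis then yields an injective resolution of $X$ of length at most $2c$. To construct $\bbI_0$ and $\bbI_1$: for each codim-$c$ subgroup $L$ with $V_X(\cF/L)\neq 0$, I would embed $V_X(\cF/L)$ into an injective torsion $\cOcFL$-module $I^L_0$ and embed the cokernel $I^L_0/V_X(\cF/L)$ into an injective torsion $\cOcFL$-module $I^L_1$. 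Setting $\bbI_0=\prod_L \at_L(I^L_0)$ and $\bbI_1=\prod_L \at_L(I^L_1)$, the products exist by Corollary \ref{cor:prods}, and by Lemma \ref{lem:atIcomponents} every component of $\bbI_0$ and $\bbI_1$ is an injective module, so each is injective in $\cAt(G)$. The map $X\to \bbI_0$ is the one corresponding under the adjunction $\HomAt(X,\at_L(I^L_0))=\Hom_{\cOcFL}(V_X(\cF/L),I^L_0)$ to the chosen monomorphisms, and $\bbI_0\to\bbI_1$ is induced analogously from the module maps.

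The hard part will be verifying that the two steps $\bbI_0\to\bbI_1$ suffice to eliminate the codim-$c$ part of the cokernel, so that $\codim(\supp C)<c$. The subtlety is that $\at_L$ is only left exact, so the natural partial resolution at codim $c$ in $\cOcFL$-modules does not immediately lift to an exact partial resolution in $\cAt(G)$; one must control the auxiliary components of $\at_L(I^L_\bullet)$ at subgroups $K\supsetneq L$. Here Lemma \ref{lem:atIcomponents} is the decisive tool: every such component is itself an injective module, so these auxiliary pieces can absorb the derived contributions arising from the failure of right-exactness of $\at_L$ without introducing new codim-$c$ obstructions to the cokernel. Once the codim-$c$ vanishing of $C$ is confirmed, the inductive hypothesis closes the argument, giving $\injdim X\leq 2+2(c-1)=2c$, and hence $\injdim(\cAt(G))\leq 2r$ by taking $c=r$.
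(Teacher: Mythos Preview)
There is a genuine gap: your two-step reduction does not eliminate the codimension-$c$ part of the cokernel. For a connected subgroup $L$ of codimension $c$, torsion $\cOcFL$-modules have injective dimension up to $c$ (the ring is essentially a product of polynomial rings in $c$ variables). At the level $\cF/L$ your complex reads
\[
0 \lra V_X(\cF/L) \lra I^L_0 \lra I^L_1 \lra C(\cF/L) \lra 0,
\]
since for $L' \neq L$ of the same codimension $\at_{L'}(I)$ vanishes at $\cF/L$; hence $C(\cF/L)$ is the second syzygy of $V_X(\cF/L)$, which is generically nonzero once $c\geq 2$. Lemma~\ref{lem:atIcomponents} tells you nothing here: it controls the components of $\at_L(I)$ at subgroups $K\supsetneq L$, not at $L$ itself. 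Your invocation of it to ``absorb derived contributions'' is addressing the wrong level. There is a secondary problem as well: with $\bbI_0$ built only from codimension-$c$ subgroups, $X\to\bbI_0$ need not be a monomorphism at $\cF/K$ for $K$ of smaller codimension (take $X=\ft_G(V)$ with $V\neq 0$).

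The paper in fact discusses exactly your inductive scheme in the paragraph preceding the proposition, but with the correct number of steps: one needs $c+1$ injectives $\bbI_0,\ldots,\bbI_c$ before the cokernel drops to codimension $\leq c-1$, and this yields only a quadratic bound. The linear bound $2c$ is obtained by a different, non-inductive argument. One builds the entire resolution at once in two phases: in steps $0,\ldots,r-1$ one takes $\bbI_s=\prod_K \at_K(I_s(K))$ over \emph{all} $K$, and Lemma~\ref{lem:atIcomponents} guarantees every component of $\bbI_s$ is injective, so the injective dimension of each component of the syzygy drops by one at every step; after $r$ steps all components of $X_r$ are injective. In steps $r,\ldots,2r-1$ one then kills the support from the bottom up, one dimension at a time, using that at a minimal-dimensional subgroup $K$ the only contributing factor of $\bbI_{r+i}$ is $\at_K(X_{r+i}(\cF/K))$ itself. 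The point is that Lemma~\ref{lem:atIcomponents} is used globally to drop injective dimension simultaneously at all levels, not locally to patch a two-step codimension reduction.
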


\begin{proof} We will give the argument for a general object (i.e.,
  with support in codimension $\leq r$), but clearly it applies to any
  codimension.
  
  For an object $X$ of $\cAt(G)$, we describe how to construct an injective resolution
$$0\lra X \lra \bbI_0\lra \bbI_1\lra \cdots \lra \bbI_{2r}\lra 0$$
of length $2r$, and we write $X=X_0$ and for $i\geq 1 $
we take $X_i=\cok (X_{i-1} \lra \bbI_{i-1})$.

There are two halves to the construction
\begin{itemize}
\item For   $X_0, \ldots , X_r$ the injective dimensions of the individual
  components is steadily reduced: noting that if $K$ is of codimension
  $c$ then the injective dimension of torsion $\cOcFK$-modules is $c$,
  we ensure that $X_s(\cF/K)$ is either  injective or of injective
  dimension $\leq c-s$. Thus the torsion components of $X_r$ are
  all injective.
\item For $X_{r}, X_{r+1}, \ldots , X_{2r} $ we retain the
  property that all torsion components are injective but we ensure
  steadily more of them are zero, so that $X_{r+i}(\cF/K)=0$ for
  $\dim (K)<i$. 
\end{itemize}

This is rather straightforward. We construct the resolution
recursively. We take $X=X_0$ and for $s\geq 0$ we suppose we
have constructed up to $X_s$ in the exact sequence
%\begin{equation}
  %\label{eqn:ress}
  $$0\lra X \lra \bbI_0\lra \cdots \lra \bbI_{s-1}\lra X_s\lra 0 $$
  %\end{equation}
For each $K$ we choose an
injective torsion $\cOcFK$-module $I_s(K)$ and a monomorphism $i_s(K): X_s (\cF/K)\lra
I_s(K)$. If $X_s(\cF/K)$ is already injective, we take
$I_s(K)=X_s(\cF/K)$ and $i_s(K)$ to be the identity.
Now take 
$\bbI_s=\prod_K \at_K(I_s(K))$ and define
$i_s : X_s\lra \bbI_s$ by ensuring the $\at_K(I_s(K))$ component
corresponds to $i_s(K)$ under the adjunction
$$\Hom_{\cAt(G)} (X_s, \at_K(I_s(K)))=\Hom_{\cOcFK} (X_s(\cF/K), I_s(K)).$$
This ensures the map $i_s$  is a monomorphism and we take
$X_{s+1}=\cok (i_s: X_s\lra \bbI_s)$.

By Lemma \ref{lem:atIcomponents}, the value of $\bbI_s$ at $\cF/K$ is
injective, so that if $X_s(\cF/K)$ is not already injective, the
injective dimensions are related by 
$\id (X_{s+1}(\cF/K)) =\id (X_s(\cF/K))-1$. This deals with the first
half of the construction. For the second half we suppose that  that
$s=r+i$ with $i\geq 0$ and $X_r, X_{r+1}, \ldots, X_{r+i}$ have support as required,
so that in particular $X_{r+i}$ has support in dimension $\geq i$.
By construction $\bbI_{r+i}$ also has support in dimension $\geq i$,
and if $K$ is of dimension $i$,  the only factor of  $\bbI_{r+i}$ not zero at $K$ is
$\at_K(I_{r+i}(K))$, so that
$$X_{r+i+1}(\cF/K)=\cok \left[X_{r+i}(\cF/K)\lra
  \bbI_{r+i}(\cF/K)=I_{r+i}(K)
\right]=0 $$
as required. We note that when $i=r$ the conclusion is $X_{2r+1}=0$.
\end{proof}

 \subsection{Local duality}
 \label{subsec:localduality}
The purpose of this section is to observe that if $T$ is an Artinian
torsion module then $a_L(T)$ is of injective dimension $\leq \dim (G/L)$ as
one might expect. The proof is straightforward using local duality.

One might view this as saying that most of the objects of $\cAt(G)$
that we need to consider have injective dimension $\leq r$. However
this is a bit misleading, since even simple operations like infinite sums may give objects of
higher injective dimension. 

\begin{prop}
  \label{prop:artid}
  If $T$ is an Artinian $\cOcFL$-module then $\id_{\cOcFL} (T)=\id_{\cAt(G)} (\at_L(T))$.
 \end{prop}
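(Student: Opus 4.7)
The plan is to prove the two inequalities $\id_{\cAt(G)}(\at_L T)\le\id_{\cOcFL}(T)$ and $\id_{\cOcFL}(T)\le\id_{\cAt(G)}(\at_L T)$ separately.

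For the lower bound $\id_{\cOcFL}(T)\le\id_{\cAt(G)}(\at_L T)$ I would start with any injective resolution $0\to \at_L T\to J^0\to\cdots\to J^{d'}\to 0$ in $\cAt(G)$ realising $d'=\id_{\cAt(G)}(\at_L T)$. Evaluation at $L$ is exact by inspection of the morphisms in $\cAt(G)$, and the counit of the adjunction in Lemma 4.5 gives $\mathrm{ev}_L\at_L T=T$. Thus applying $\mathrm{ev}_L$ yields an exact sequence $0\to T\to J^0(\cF/L)\to\cdots\to J^{d'}(\cF/L)\to 0$ of torsion $\cOcFL$-modules. By Lemma~\ref{lem:enoughinj} each $J^s$ is a retract of a product of generating injectives $\at_K(I(K))$; by Lemma~\ref{lem:atIcomponents} every component $\at_K(I(K))(\cF/L)$ is an injective torsion $\cOcFL$-module, and since products and retracts preserve injectivity, each $J^s(\cF/L)$ is an injective torsion $\cOcFL$-module. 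Hence $T$ admits an injective resolution of length $\le d'$.

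For the upper bound $\id_{\cAt(G)}(\at_L T)\le\id_{\cOcFL}(T)$, let $d=\id_{\cOcFL}(T)$ and take an injective resolution $0\to T\to I^0\to\cdots\to I^d\to 0$ with each $I^s$ a finite direct sum of copies of hulls $H_*(BG/\Lt)$; this is possible because over the polynomial-type ring $\cOcFL$ an Artinian module has minimal injective resolution with Bass numbers $\mu_s=\dim_k\Ext^s_{\cOcFL}(k,T)$ finite. Each $\at_L(I^s)$ is injective in $\cAt(G)$ because $\at_L$ is right adjoint to evaluation. Left-exactness of $\at_L$ is automatic, so what remains is to check that the complex $\at_L(I^\bullet)$ is right-exact, equivalently that $\at_L$ takes short exact sequences of Artinian torsion $\cOcFL$-modules to short exact sequences in $\cAt(G)$. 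Componentwise, this reduces to the vanishing of $\Ext^{\ge 1}_{\cOcFL}(\cEi_{H/L}\cOcFL,T)$ for Artinian $T$, which one obtains from the Gorenstein identification of Lemma~\ref{lem:GorD}: the functor $\Gamma_\Sigma\Gamma_{\cF/H}\Hom_{\cOcFL}(\cEi_{H/L}\cOcFL,-)$ agrees, on injective hulls $H_*(BG/\Lt)$ cotoral above $L$, with the top local cohomology $H^{\dim G/H}_{\fm_{G/H}}(\cEi_{H/L}(-))$ of a linear-localised polynomial ring up to suspension (Lemma~\ref{lem:injasloccoh}), and this is concentrated in a single cohomological degree, killing higher derived functors. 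Horseshoe induction extends the short exact sequence property from injective hulls to all Artinian modules.

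The main obstacle is precisely this right-exactness step, since $\at_L$ is built from three successive right adjoints ($\Hom_{\cOcFL}(\cEi_{H/L}\cOcFL,-)$, $\Gamma_{\cF/H}$, $\Gamma_\Sigma$) and is not right-exact on general torsion modules. Artinianness of $T$ is used twice: once to keep the Bass numbers (and hence the injectives $I^s$) finite, so that $\Gamma_\Sigma$ contributes no runaway sum across infinitely many subgroups; and once to invoke the Gorenstein local-cohomology collapse of Lemma~\ref{lem:GorD}, forcing the higher derived functors of $\at_L$ to vanish on the generating injective hulls $H_*(BG/\Lt)$. With these two ingredients, concatenating the short exact sequences arising from the resolution of $T$ produces the desired injective resolution of $\at_L T$ of length $d$, completing the equality.
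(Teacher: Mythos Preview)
Your lower-bound argument is correct and more explicit than the paper, which simply asserts that exactness of $\at_L(I_\bullet)$ ``gives the desired conclusion'' without spelling out the reverse inequality.

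For the upper bound, however, there is a genuine gap. You claim that componentwise exactness of $\at_L(I^\bullet)$ ``reduces to the vanishing of $\Ext^{\ge 1}_{\cOcFL}(\cEi_{H/L}\cOcFL,T)$''. That Ext vanishing is indeed true for Artinian $T$ (and follows by Matlis-dualising to a free resolution and localising), but it only tells you that $\Hom_{\cOcFL}(\cEi_{H/L}\cOcFL,I^\bullet)$ is exact. The component of $\at_L$ is the composite $\Gamma_\Sigma\,\Gamma_{\cF/H}\,\Hom_{\cOcFL}(\cEi_{H/L}\cOcFL,-)$, and neither $\Gamma_{\cF/H}$ nor $\Gamma_\Sigma$ is right exact. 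In particular, Lemma~\ref{lem:GorD} shows that the torsion functor $\Gamma_{\cF/H}$ is \emph{not} the identity on $\Hom_{k_K}(\cEi_K H^*(BG/\Lt),k_K)$, so you cannot simply drop it. Your sentence about local cohomology being ``concentrated in a single cohomological degree, killing higher derived functors'' conflates the grading of $H^*_{\fm}$ on a ring with the derived functors of the three-fold composite; it does not constitute an argument, and the Horseshoe remark does not repair this.

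The paper closes this gap by a different route. Rather than analysing the derived functors of the composite, it Matlis-dualises the Artinian injective resolution $I_\bullet$ of $T$ to a finite free resolution $F_\bullet$ of $N=\Hom_k(T,k)$, localises to obtain the free resolution $\cEi_{K/L}F_\bullet$ of $\cEi_{K/L}N$, and then dualises over the graded field $k_K$. The point is that Lemma~\ref{lem:injasloccoh} identifies $\at_L(I_j)(G/\Kt)$ directly with $\Sigma^{-2t}H^t_{\fm_{G/\Kt}}$ of the corresponding free module, so the $\Gamma_\Sigma\Gamma_{\cF/H}$ have already been absorbed into the local-cohomology description, and exactness follows from exactness of localisation and of $\Hom_{k_K}(-,k_K)$. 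To make your approach work you would need to prove separately that $\Gamma_{\cF/H}$ and $\Gamma_\Sigma$ are the identity on the modules $\Hom_{\cOcFL}(\cEi_{H/L}\cOcFL,I^s)$ arising from a finite-Bass-number resolution; this is essentially the content of Lemmas~\ref{lem:GorD} and~\ref{lem:injasloccoh}, so you end up at the paper's argument anyway.
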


  \begin{proof}
    Since $T$ is Artinian, we may choose 
  a resolution by Artinian injective torsion modules 
  $$0\lra T \lra I_0\lra I_1\lra \cdots \lra I_d\lra 0. $$
  We will show that 
  $$0\lra \at_L(T) \lra \at_L(I_0)\lra \at_L(I_1)\lra \cdots \lra \at_L(I_d)\lra 0$$
  is exact, which gives the desired conclusion.

  The point is that if we take $I=H_*(BG/L)$ then by Lemma
  \ref{lem:injasloccoh} the torsion
  component of $\at_L(I)$  at $\cF/K$ is
  $H^t_{\fm_{G/K}}(\cEi_KH^*(BG/L))$, so we need to see that this
  process is exact.

  For this, we apply local duality. The idea is that the
injective $\cOcFK$-module resolution $I_\bullet$ can be recognized as the local
cohomology of the dual of a free $\cOcFK$-module resolution $F_\bullet$ of a dual
module $N$. By a similar process after localization, the values at
other levels are then recognized as local
cohomology of localizations of $N$. The exactness of
$I_\bullet$ then gives exactness of these. 

The key here is that for the polynomial ring $P=k[x_1, \ldots , x_s]$ we have
$$H^*_\fm(P)=H^s_\fm(P)=\Sigma^{2s} \Hom_k(P,k). $$
As usual the $\Hom_k$ refers to graded maps, and if we used ungraded
maps we would insert $\Gamma_\fm$ to achieve the same end. From the
variance we can see this cannot be natural, but we can easily correct
that: for arbitrary free $P$-modules $F$ we have a natural isomorphism 
$$\Hom_k( H^s_\fm (F), k)=\Sigma^{-2s} \Hom_P  (F, P). $$
Thus if $F_\bullet$ is a free resolution of a
$P$-module $N$ we have
$$\Hom_k( H^{s-i}_\fm (N), k)=\Sigma^{-2s} \Ext^{i}_P  (N, P) $$

Note that we have natural maps 
$$H^s_\fm (F)\lra \Sigma^{2s}\Hom_k (\Hom_k  (H^s_\fm(F), k),k) \mbox{ and }
F\lra \Sigma^{2s}\Hom_P (\Hom_P (F,P), P), $$
both of  which are  isomorphisms if $F$ is of finite rank. Thus if
$M$ is Artinian, the injective resolution $I_\bullet$ of $M$
determines a free resolution $F_\bullet$ of $N=\Hom_k(M,k)$.
Similarly $\cEi_K F_\bullet$ is a free resolution of
$\cEi_K\Hom_k(M,k)$, and then by taking
$\Hom_{k_K}(\cdot  , k_K)$ we obtain an
injective resolution of $\Hom_{k_K}(\cEi_K M, k_K)$.

By local duality this is 
$$\at_L(I_{\bullet}(G/K))=\Sigma^{-2t}H^{t}_{\fm_{G/K}}(\Hom_{\cEi_K
  P}(\cEi_KF_\bullet, k_K)) , $$
showing that the complex is exact as required. 
\end{proof}

\subsection{Attainment of the bound}
\label{subsec:attained}
The aim of this subsection (not currently achieved!) is to establish
the exact injective dimension of $\cAt(G)$ by writing down an object
of injective dimension equal to the upper bound established in
Proposition \ref{prop:id}. We showed in Proposition \ref{prop:artid}
that if $T$ is Artinian  $\at_L(T)$ has the same injective dimension as $T$ does, and
hence all objects of this type have injective dimension $\leq r$.

We will show that for $r\geq 1$ there are objects $X$ with codimension
$1$ that are of injective dimension 2. This only establishes the injective
dimension of $\cAt(G)$ for the circle group. The same argument
shows that there is an object of codimension $c$ with injective
dimension $c+1$ in general, so that we only know that the injective
dimension of $\cAt(G)$ lies between $r+1$ and $2r$.

\begin{lemma}
  If $X$ has support in codimension 1 then
  \begin{itemize}
    \item $X$ is injective if and only if $V(\cF/G)\lra \Hom_{\cOcFH}( \cEi_{G/H}
      \cOcFH, V(\cF/H))$ is surjective for all $H$ of codimension 1.
    \item $X$ is of injective dimension $\leq 1$ if and only if
      $$\Ext_{\cOcFH}^1(\cEi_{G/H}\cOcFH, V(\cF/H))= 0 \mbox{ for all subgroups
        $H$ of codimension 1}$$
      \end{itemize}
  \end{lemma}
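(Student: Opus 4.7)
The plan is to build an explicit injective resolution of $X$ of length at most $2$ and read off both characterisations from where it terminates. Since each $\cOcFH$ for $H$ of codimension $1$ is a polynomial ring of Krull dimension $1$, torsion $\cOcFH$-modules have injective dimension $\leq 1$; for each such $H$ I choose an injective hull $V(\cF/H) \hookrightarrow I_H$ in torsion $\cOcFH$-modules and set $J_H = I_H / V(\cF/H)$, which is then again injective.

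For the first part, form the injective $\bbI_0 := \at_G(V(\cF/G)) \oplus \Gamma\bigl(\prod_H \at_H(I_H)\bigr)$ (injective by Lemma \ref{lem:atIcomponents}) and the natural unit monomorphism $\iota : X \hookrightarrow \bbI_0$, which at $\cF/G$ is the identity into the first summand composed with the adjoint-form maps into the product, and at each $\cF/H$ is the chosen inclusion $V(\cF/H) \hookrightarrow I_H$. Then $X$ is injective iff $\iota$ splits. At each $\cF/H$ a retract restricts to a retract of $V(\cF/H) \hookrightarrow I_H$, which exists iff $V(\cF/H)$ is already an injective torsion $\cOcFH$-module (the implicit hypothesis in the stated form of the lemma). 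Granted this, so that $I_H = V(\cF/H)$, compatibility of the retract with the horizontal structure maps at $\cF/G \to \cF/H$ forces its restriction to $\Gamma\prod_H \Hom_{\cOcFH}(\cEi_{G/H}\cOcFH, V(\cF/H))$ to be a section of the adjoint-form map $\alpha_H : V(\cF/G) \to \Hom_{\cOcFH}(\cEi_{G/H}\cOcFH, V(\cF/H))$; since all modules in play are $\Q$-vector spaces, existence of such a section is equivalent to surjectivity of $\alpha_H$, giving the first claim.

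For the second part, let $Q_1 := \cok(\iota)$. A direct cokernel computation gives $V_{Q_1}(\cF/H) = J_H$ (injective) and $V_{Q_1}(\cF/G) = \Gamma\bigl(\prod_H \Hom_{\cOcFH}(\cEi_{G/H}\cOcFH, I_H)\bigr)$, the $V(\cF/G)$-summand of $\bbI_0$ being absorbed by the image of $\iota$. The induced adjoint-form structure map at $\cF/G \to \cF/H$ is projection to the $H$-factor followed by postcomposition with $I_H \twoheadrightarrow J_H$. Applying the argument of the first part to $Q_1$ (whose $\cF/H$-components are already injective), $Q_1$ is injective iff, for each $H$, the postcomposition map
\[
\Hom_{\cOcFH}(\cEi_{G/H}\cOcFH, I_H) \lra \Hom_{\cOcFH}(\cEi_{G/H}\cOcFH, J_H)
\]
is surjective. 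Applying $\Hom_{\cOcFH}(\cEi_{G/H}\cOcFH, -)$ to $0 \to V(\cF/H) \to I_H \to J_H \to 0$, and using that $\Ext^1_{\cOcFH}(\cEi_{G/H}\cOcFH, I_H) = 0$ since $I_H$ is injective, identifies the cokernel of this map with $\Ext^1_{\cOcFH}(\cEi_{G/H}\cOcFH, V(\cF/H))$. Since $X$ has injective dimension $\leq 1$ iff $Q_1$ is injective, the second claim follows.

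The main obstacle is bookkeeping: checking that the cokernel at $\cF/G$ really is the asserted product of $\Hom$-groups (with the $V(\cF/G)$-summand absorbed by the graph of the adjoint map), that the induced structure maps in $Q_1$ are as described, and that the $\Gamma_\Sigma$-functor does not disturb the computations since the long-exact-sequence argument happens one $H$ at a time. Once this is done, the identification with $\Ext^1$ is a standard invocation of the long exact sequence.
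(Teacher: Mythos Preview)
The paper states this lemma without proof (it sits in Subsection~\ref{subsec:attained}, which the author flags as incomplete), so there is nothing to compare against; your overall strategy---build an explicit short injective resolution and read off the two characterisations from where it terminates---is the natural one and is what the surrounding material suggests. You are also right to flag that injectivity of each $V(\cF/H)$ is an implicit hypothesis in the first bullet.

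There is, however, a genuine gap in your argument for the first bullet, and it reflects an imprecision in the lemma as stated. Your splitting analysis shows that a retraction of $\iota$ exists if and only if the \emph{combined} adjoint map
\[
\alpha = (\alpha_H)_H : V(\cF/G) \lra \bigoplus_H \Hom_{\cOcFH}(\cEi_{G/H}\cOcFH, V(\cF/H))
\]
is surjective (local finiteness forces $\alpha$ to land in the sum). You then pass to ``surjectivity of each $\alpha_H$'', but these conditions are not equivalent. For a concrete failure, take two codimension-1 subgroups $H_1,H_2$ with $V(\cF/H_i)$ nonzero injective, write $M_i=\Hom_{\cOcF_{H_i}}(\cEi_{G/H_i}\cOcF_{H_i},V(\cF/H_i))$, set $V(\cF/G)=M_1$ with $\alpha_{H_1}=\id$ and $\alpha_{H_2}$ any surjection $M_1\twoheadrightarrow M_2$. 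Each $\alpha_{H_i}$ is surjective, but the image of $\alpha$ is the graph of $\alpha_{H_2}$ inside $M_1\oplus M_2$; enlarging $V(\cF/G)$ by a line mapping to an element off this graph gives a monomorphism out of $X$ that does not split, so $X$ is not injective. Thus the retraction argument proves the corrected statement (surjectivity of the combined $\alpha$), not the one written.

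Your argument for the second bullet survives this correction. For $Q_1$ the combined map $\alpha^{Q_1}$ is, under your identification $Q_1(\cF/G)\cong\Gamma\prod_H\Hom(\cEi_{G/H}\cOcFH,I_H)$, the componentwise product of the post-composition maps $\Hom(\cEi_{G/H}\cOcFH,I_H)\to\Hom(\cEi_{G/H}\cOcFH,J_H)$; for a map of this diagonal form, surjectivity of the whole is equivalent to surjectivity of each factor (modulo the $\Gamma$-bookkeeping you already flag, which needs to be checked but is routine here). The identification of the cokernel of each factor with $\Ext^1_{\cOcFH}(\cEi_{G/H}\cOcFH,V(\cF/H))$ via the long exact sequence is then exactly right. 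So: fix the first bullet to speak of the combined $\alpha$, and your proof of both parts goes through.
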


\begin{lemma}
\label{lem:aspherical}
$$\Ext^s_{\cOcFH}(\cEi_H\cOcF , T)=
\trichotomy{\Hom_{\cOcFH}(\cEi_H\cOcF , T) \mbox{ if } s=0}
{\ilim^1(T, \cE_H) \mbox{ if } s=1}
{0 \mbox{ if } s\geq 2}$$
\end{lemma}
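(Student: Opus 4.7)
The plan is to resolve $\cEi_H \cOcF$ by a two-step (telescope) projective resolution of $\cOcFH$-modules and read off the Ext groups directly.

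First I would choose a cofinal countable sequence $1 = e_0, e_1, e_2, \ldots$ in the multiplicative set $\cE_H$, so that every $e \in \cE_H$ divides some $e_n$. This is possible because $\cE_H$ is generated by Euler classes of characters of the quotient torus $G/H$, and the character lattice of a torus is countable. Since each $e_n \in \cOcFH$ is a product of linear forms and $\cOcF$ (as well as $\cOcFH$) is a polynomial-type ring, the $e_n$ act as non-zero-divisors, and we obtain the standard Milnor short exact sequence
\[
0 \lra \bigoplus_{n\geq 0} \Sigma^{|e_n|}\cOcFH \stackrel{1-\mathrm{sh}}\lra \bigoplus_{n\geq 0} \Sigma^{|e_n|}\cOcFH \lra \cEi_H\cOcF \lra 0,
\]
in which the shift map sends the $n$th summand into the $(n{+}1)$st by multiplication by $e_{n+1}/e_n$. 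The cokernel is the colimit defining $\cEi_H\cOcF$, and the middle map is injective by the non-zero-divisor property. This exhibits $\cEi_H\cOcF$ as having a free $\cOcFH$-resolution of length one, giving the vanishing $\Ext^s_{\cOcFH}(\cEi_H\cOcF, T) = 0$ for $s \geq 2$ immediately.

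Next I would apply $\Hom_{\cOcFH}(-, T)$ to the resolution. Because Hom turns direct sums into direct products, this yields the four-term exact sequence
\[
0 \to \Hom_{\cOcFH}(\cEi_H\cOcF, T) \to \prod_{n\geq 0} \Sigma^{-|e_n|}T \stackrel{1-\mathrm{sh}^*}\lra \prod_{n\geq 0} \Sigma^{-|e_n|}T \to \Ext^1_{\cOcFH}(\cEi_H\cOcF, T) \to 0.
\]
The kernel of $1 - \mathrm{sh}^*$ consists exactly of compatible sequences $(t_n)$ with $t_n = (e_{n+1}/e_n)\cdot t_{n+1}$, which is precisely $\lim(T, \cE_H)$, while the cokernel is, by definition of the derived functor of $\lim$ over a countable tower, $\lim^1(T, \cE_H)$. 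Cofinality of $\{e_n\}$ in $\cE_H$ ensures these agree with the limits computed over the whole multiplicative set.

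The main obstacle is essentially bookkeeping: one needs to confirm that the Milnor presentation really is a projective resolution (which reduces to the statement that each $e_n$ acts injectively, immediate since the relevant rings are domains or more generally have the $e_n$ as non-zero-divisors), and that the $\lim/\lim^1$ notation in the conclusion is meant with respect to this indexing. Higher $\lim^s$ vanishing for $s \geq 2$ is automatic from the fact that our resolution has length one — equivalently, from the fact that a countable inverse system has at most $\lim^1$ nontrivial.
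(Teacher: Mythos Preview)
Your argument is correct and is the standard one; the paper does not spell out a proof but simply records the general identity $\Ext^1_R(\cEi R, M)=\ilim^1(M,\cE)$ immediately after the lemma, which is exactly what your telescope resolution establishes. One minor point: injectivity of $1-\mathrm{sh}$ on $\bigoplus_{n\geq 0} M_n$ holds for \emph{any} $\mathbb{N}$-indexed direct system (an element of the kernel has vanishing $0$th component, hence vanishes by induction), so the non-zero-divisor hypothesis you flag in the final paragraph is not actually needed, though it does no harm.
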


For a commutative ring $R$ and a
multiplicatively closed set $\cE$, we have 
$$\Ext_R^1(\cEi R, M)=\ilim^1 (M,\cE). $$
We make some observations about vanishing and non-vanishing of this. 

\begin{lemma}
Suppose $R$ is a commutative ring and $\cE$ is a multiplicatively
closed subset. There is an isomorphism
$$\ilim^1_{e\in \cE} (eM)\cong M_{\cE}^\wedge/M. $$
There is an epimorphism 
$$\ilim^1_{e\in \cE} (M,e)\lra R^1\ilim_{e\in \cE} (eM)$$ 
\end{lemma}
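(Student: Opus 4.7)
For the first statement, the plan is to exploit the short exact sequence of inverse systems over $\cE$ (ordered by divisibility, so $e \leq e'$ iff $e \mid e'$):
$$0 \lra \{eM\}_e \lra \{M\}_e \lra \{M/eM\}_e \lra 0,$$
in which the middle system is constant (transitions the identity on $M$), the left transition $e'M \hookrightarrow eM$ for $e \mid e'$ is inclusion, and the right transition $M/e'M \twoheadrightarrow M/eM$ is the induced surjection. Applying $\ilim$ yields a long exact sequence; the middle (constant) system gives $\ilim^1 = 0$ for free, and the right system is Mittag--Leffler since its transitions are surjective, so its $\ilim^1$ also vanishes. What remains is
$$0 \lra \bigcap_e eM \lra M \lra \ilim_e M/eM \lra \ilim^1_e (eM) \lra 0.$$
Identifying $\ilim_e M/eM$ with the $\cE$-adic completion $M_\cE^\wedge$ and, by the usual abuse, writing $M_\cE^\wedge / M$ for the cokernel of the natural map $M \to M_\cE^\wedge$ yields the stated isomorphism.

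For the second statement, I will compare the two systems directly. Multiplication by $e$ defines a levelwise surjection of inverse systems
$$(M, \cdot e) \twoheadrightarrow (eM,\ \mathrm{incl}), \qquad m \longmapsto em,$$
whose kernel at level $e$ is $\ann_M(e)$. The associated long exact sequence of derived inverse limits gives
$$\cdots \lra \ilim^1 (M, \cdot e) \lra \ilim^1 (eM) \lra \ilim^2 \{\ann_M(e)\} \lra \cdots,$$
so the claimed epimorphism is reduced to the vanishing of $\ilim^2$ on the kernel system.

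The main obstacle, as I see it, is twofold: giving a clean definition of $(M, \cdot e)$ as an inverse system (since transitions $M \to M$ corresponding to $e \mid e'$ would involve quotients $e'/e$ which are not intrinsically defined) and justifying the $\ilim^2$ vanishing. Both are handled simultaneously by restricting to a countable cofinal chain $e_1 \mid e_2 \mid e_3 \mid \cdots$ in $\cE$, which exists in the cases of interest where $\cE$ is countably generated by Euler classes (each a nonzerodivisor in $\cOcFK$). On this $\bN$-indexed cofinal tower the transitions are unambiguous, and the classical vanishing $\ilim^n = 0$ for $n \geq 2$ on towers delivers the desired surjectivity and completes the proof.
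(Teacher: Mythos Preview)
Your proof is correct and follows the same approach as the paper: the same two short exact sequences of inverse systems ($0\to eM\to M\to M/eM\to 0$ for the first part, $0\to\ann_M(e)\to M\stackrel{e}\to eM\to 0$ for the second) and the associated six-term $\ilim$ sequence. You supply justifications the paper leaves implicit---the Mittag--Leffler argument for $\ilim^1(M/eM)=0$, and the passage to a countable cofinal tower to secure $\ilim^2=0$ and to make the transition maps in $(M,e)$ unambiguous---but the underlying strategy is identical.
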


\begin{proof}
For the first statement we have the inverse system with $e,f\in \cE$
$$\xymatrix{0\rto &eM \rto & M\rto &M/eM \rto &0\\
0\rto &efM \uto \rto & M\uto^{=} \rto &M/efM \uto \rto &0 
}$$
where the maps are inclusions and projections. 
Taking inverse limits gives a six term exact sequence with the last
two zero. 

For the second we have a inverse system 
$$\xymatrix{
0\rto &\ann_M (e) \rto & M\rto^e  &eM \rto  &0 \\
0 \rto &\ann_M(ef)  \rto \uto^f& M\rto^{ef}\uto^f &efM \uto \rto &0 
}$$
The last map in the six term exact sequence is an epimorphism. 
 \end{proof}

In particular if we may take $R=k[x_1, \ldots , x_r]$ to be a polynomial
ring on generators of degree $-2$. Now take $M=\bigoplus_{n\geq 0}
\Sigma^{2n}R/\fm^n$ 
and $\cE=\langle x_1\rangle$, and we see that  $M$ is not 
$\cE$-complete  and hence $R^1\ilim(M,\cE)\neq0$. 

This is enough to construct an object of $\cAt(G)$ of injective
dimension $r+1$ when $r\geq 1$. Indeed we may take 
$M=\bigoplus_n \Sigma^{2n}k[c_1]/c_1^n$. We then find 
$$\Gamma_{\cF/L}\Ext^1(\cEi_L\cOcF, M)\neq 0.  $$
Since the module is annihilated by $c_2, \ldots , c_r$ it has
injective dimension $r-1$. Hence $\at_1(M)$ has injective dimension
$\geq 2+(r-1)=r+1$.

%One might expect to be able to iterate this construction, but it
%doesn't appear to work in a simple way.

% \begin{lemma}
% For any commutative ring $R$, any $R$-module $M$ and any countable 
% multiplicatively closed subsets $\cE_1, \cE_2$ we have 
% $$\Ext_R^s(\cEi_1 R, \Ext_R^t(\cEi_2R, M))=0 \mbox{ for } a+b\geq 2. $$
% \end{lemma}

% \begin{proof} 
% In view of the adjunction
% $$\Hom (A, \Hom(B, M))\cong \Hom (A\tensor B, M),  $$
% there is a spectral sequence
% $$\Ext^i_R(\cEi_1R, \Ext^j_R(\cEi_2 R, M))\Rightarrow
% \Ext^{i+j}_R(\cEi_1\cEi_2R, M). $$
% \end{proof}

% The following result is trivial, and probably irrelevant. 
% \begin{lemma}
% For any commutative ring $R$, and any countable 
% multiplicatively closed subsets $\cE$ if $M$ is $\cE$-divisible then  
% $$\Ext^1_R(\cEi R, M)=0. $$
% \end{lemma}
% \begin{proof}
% The divisibility proves that $(M,e)$ is Mittag-Leffler and hence
% $R^1\ilim (M,e)=0$. 
% \end{proof}

\section{Ind-corepresenting evaluation}
\label{sec:indcorep}
\newcommand{\Kts}{\tilde{K}^*}
Given an object $X$ of $\cAt (G)$, we would like to be able to 
determine its torsion modules $V(\cF/K)$ by considering maps from 
objects of $\cAt(G)$, in the same way that homotopy groups are given 
by maps out of a sphere. Although this is not possible as it stands, 
we can recover $V(\cF/K)$ as a colimit of such values. Since
$V(\cF/K)=\bigoplus_{\Kt}V(G/\Kt)$ we focus on a single subgroup $\Kt$.

This phenomenon should be rather familiar. 
Given a torsion $k[c]$-module $T$ we may recover $T$ as 
$T=\Hom_{k[c]}(k[c], T)$, but if we are restricted to using torsion 
modules in the domain this is not available to us. However, 
$\ann (c^n,T)=\Hom_{k[c]}(k[c]/c^n, T)$, so 
$$T=\bigcup_n \ann (c^n,T)=\colim_n \Hom_{k[c]}(k[c]/c^n, T).$$
In this sense the module is ind-corepresented by the inverse system $\{ k[c]/c^n\}_n$. 
The situation with $\cAt (G)$ is a little more complicated, so we 
start with the circle group in the semifree case as in Section \ref{sec:semifreecircle}. 

\begin{example} {\em (The circle group)}
By the discussion above we see immediately that evaluation at $G/1$ is ind-corepresented by $\{ (0\lra 
k[c]/c^n)\}_n$. This is an object familiar from topology since 
$\pi^G_*(DS(nz)_+)=k[c]/c^n$, where $D$ denotes Spanier-Whitehead
duality, so that  $\piAts(DS(nz)_+)=(0\lra 
k[c]/c^n)$.  In fact the representing inverse system is the homotopy of 
the dual of the  direct system $S(z)_+\subset S(2z)_+\subset S(3z)_+\subset \cdots$. 

Evaluation at $G/1$ is ind-corepresented by  the inverse system $\{ t\tensor k \lra 
k[c,c^{-1}]/c^nk[c])\}_n$. To define a map 
$$e: \colim_n \Hom \left( [t\tensor k\lra k[c,c^{-1}]/c^nk[c]],  [t\tensor  
V\lra T]\right) \lra V$$ 
we proceed as follows.  Consider a map $\theta$ from the $n$th term
$$
\xymatrix{
t\tensor k \dto \rto^{1\tensor \theta(G/G)}&t\tensor V\dto^q\\
 k[c,c^{-1}]/c^n k[c]\rto^(.7){\theta(G/1)}&T
}$$
and note first that it is determined by  $\theta (G/G)$. This is
because the vertical structure
map in the domain is an epimorphism. Thus $\theta $ is determined by
 evaluation  at $1\tensor \iota $. This is compatible with the maps in
the inverse system as $n$ varies (since all are the identity at $G/G$), and so we may
define $e([\theta ])=\theta (G/G) ( \iota)$ and obtain a well defined
injective map $e$. Every element in the image of $q$
is divisible, so that if $q(1\tensor v)$ is annihilated by $c^s$ there 
is a map $\theta$ with evaluation $v$  if $n\geq s$, and since $T$ is
torsion the map $e$ is surjective. 

Again the objects are geometrically familiar since  
 $\piAts (S^{-nz})=(t\tensor \Q \lra k[c,c^{-1}]/c^nk[c])$. 
 In fact the representing inverse system is the homotopy of 
the dual of the system $S^0\subset  S^z \subset S^{2z} \subset S^{3z}\subset \cdots$
\end{example}

In principle we could continue purely algebraically, but the
topological motivation will lead us efficiently to a solution. 
Recall that 
\begin{multline*}
\piAts (X)(G/\Kt)=\piGs(\siftyV{K}\sm EG/\Kt_+\sm X)\\
=[S^0, \colim_{V^K=0}S^V \sm \colim_n EG/\Kt^{(n)}_+\sm X]^G 
=\colim_{V^K=0,n}[S^{-V} \sm DEG/\Kt^{(n)}_+, X]^G 
\end{multline*}
This suggests that if we choose a nice  filtration of the 
universal space (indicated by superscript $(n)$), we could use 
$$B_{\Kt} (V,n)=\piAts  (S^{-V}\sm DEG/\Kt_+^{(n)})$$
to give ind-corepresenting objects.

We choose to be very explicit so as to make this more easily
digested, but pay the price of making unnecessary choices. We are
ind-corepresenting evaluation at  $G/\Kt$, so $\Kt$ is fixed.
The corepresenting object will be zero except at subgroups cotorally
beneath $\Kt$, so we choose a subgroup $\Lt$ so that $\Kt/\Lt$ is a
torus.

We have a short exact sequence
$$1\lra \Kt/\Lt \lra G/\Lt \lra G/\Kt\lra 1$$
giving a short exact sequence of commutative $k$-algebras  
$$k \lla H^*(B\Kt/\Lt) \lla H^*(BG/\Lt) \lla H^*(BG/\Kt)\lla k. $$
This is natural, but because   $\Kt /\Lt$ is a torus we may go further
and choose a splitting.  
Thus the projection $G/\Lt\lra G/\Kt$ is split, and we may choose $\Kts\supseteq \Lt$ so that the composite $\Kt /\Lt \lra G/\Lt\lra G/\Kts$
is an isomorphism, and hence $G/\Lt =G/\Kt \times G/\Kts$. 

We choose one dimensional representations 
$\alpha_1, \alpha_2, \ldots, \alpha_s$
with kernels $K(\alpha_i)=\ker(\alpha_i)$ so that 
$$G/\Kt =G/K(\alpha_1)\times G/K(\alpha_2)\times \cdots \times 
G/K(\alpha_s)$$
Similarly, we choose  $\beta_1, \beta_2, \ldots, \beta_t$
so that 
$$\Kt/\Lt \cong G/\Kts =G/K(\beta_1)\times G/K(\beta_2)\times \cdots \times 
G/K(\beta_t). $$
This allows us to have explicit models for universal spaces: 
$$EG/\Kt=S(\infty \alpha_1)\times S(\infty \alpha_2)\times \cdots \times 
S(\infty \alpha_s) \mbox{ and } 
EG/\Kts=S(\infty \beta_1)\times S(\infty \beta_2)\times \cdots \times 
S(\infty \beta_s). $$
As filtration, we take 
$$EG/\Kt^{(n)}=S(n \alpha_1)\times S(n \alpha_2)\times \cdots \times 
S(n \alpha_s). $$
Since this is an orientable manifold, it will be equivalent to its
dual up to a shift.  To codify this, take $x_i=e(\alpha_i), y_j=e(\beta_j)$, so that 
$$H^*(BG/\Lt)\cong H^*(BG/\Kt)\tensor H^*(B\Kt /\Lt)\cong k[x_1, \ldots, x_s, y_1,
\ldots, y_s]$$
We also write 
$$\fm_{G/\Kt}=(x_1, \ldots, x_s), \fm_{G/\Kt}^{[n]}=(x_1^n, \ldots, x_s^n), $$
and note
$$H^*(BG/\Kt^{(n)})=H^*(BG/\Kt)/\fm_{G/\Kt}^{[n]}
\mbox{ and } H^*(BG/\Kt^{(n)})=\ann
(\fm_{G/\Kt}^{[n]}, H_*(BG/\Kt)). $$
It is obvious that $H^*(BG/\Kt^{(n)})$ is cyclic, but we note that it is
also self-dual. This then gives the important fact that 
$H_*(BG/\Kt^{(n)})$ is also cyclic, and generated by the
element dual to $(x_1x_2\cdots x_s)^n$ if we use the monomial
basis. We note that this is the Euler class of 
$(\alpha_1\oplus \cdots \oplus \alpha_s)^{\oplus n}$. One might say
$$H_*(BG/\Kt^{(n)})=\frac{1}{(x_1x_2\cdots x_s)^n} \cdot H^*(BG/\Kt^{(n)}).$$

\begin{remark}
This corresponds to a Spanier-Whitehead duality statement. Since
 $D(S(V)_+)\simeq \Sigma^{1-V}S(V)_+$ we 
see that
$$D(EG/\Kt^{(n)}_+)\simeq \Sigma^{s-(\alpha_1\oplus \cdots \oplus
  \alpha_s)^{\oplus n}}EG/\Kt^{(n)}_+ . $$
\end{remark}

Finally, we note that any representation $V$ of $\Lt$ with $V^{\Kt}=0$
can be written as a sum of monomials in the $\alpha_i$ and $\beta_j$
where each monomial involves at least one  $\beta_j$. Accordingly, 
$$\cE_{G/\Kt}=\{ \sum_i \lambda_i x_i +\sum_j\mu_j y_j \st (\mu_1,
\ldots , \mu_t)\neq (0, \ldots , 0)\} . $$

 \begin{remark}
 Since our construction has involved a lot of choice, it is worth 
 considering what is intrinsic once $\Kt\supseteq \Lt$ is chosen. 

 First of all,  $H^*(BG/\Kt)=k[x_1, \ldots , x_s]$ is an intrinsic 
 subalgebra of $H^*(BG/\Lt)$. From the geometric point of view, writing 
 $TG:= \spec (H^*(BG))$ we are considering the projection $TG/\Lt\lra 
 TG/\Kt$. Similarly  the ideal $\fm_{G/\Kt}$ is intrinsic, and $T\Kt/\Lt$
 is its zero set, giving the fibration 
 $$T\Kt/\Lt \lra TG/\Lt\lra TG/\Kt. $$
 Finally,  the multiplicatively closed subset $\cE_{K/L}$ is generated 
 by elements of $H^2(BG/\Lt)$ not in $\fm_{G/\Kt}$ and is also intrinsic: 
 inverting $\cE_{K/L}$ is localization at $TK/L=T\Kt/\Lt$. 

 Accordingly, the $H^*(BG/\Lt)$-module 
 $\cEi_{K/L}H^s_{\fm_{G/\Kt}}(H^*(BG/\Lt))$ (the localized $T\Kt/\Lt$-local 
 cohomology) is intrinsic. 
 \end{remark}

\begin{prop}
\label{prop:indcorep}
Evaluation at $G/\Kt$ is ind-corepresented by objects $B_{\Kt}(V,n)$ as $V$
varies through representations with $V^K=0$ and $n\geq 0$. Here 
$B_{\Kt}(V,n)(G/\Lt)=0$ unless $\Lt$ is cotoral in $\Kt$ and in that
case  we have 
$$B_{\Kt}(V,n) (G/\Lt)= S^{-V^{\Lt}}\sm H^*(BG/\Kt^{(n)})\tensor H_*(B\Kt/\Lt)$$
as $V$ varies through representations with $V^K=0$ and  $n\geq 0$.
\end{prop}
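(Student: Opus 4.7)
The plan is to exploit that the top component $B_{\Kt}(V,n)(G/\Kt) = H^*(BG/\Kt^{(n)})$ is cyclic over $\cOcFK$, generated by the class $\iota_n = [1]$ whose annihilator is $\fm_{G/\Kt}^{[n]}$. I define the evaluation map
$$e: \colim_{V,n} \HomAt(B_{\Kt}(V,n), X) \lra V_X(G/\Kt), \qquad [\theta] \mapsto \theta_{G/\Kt}(\iota_n),$$
and check well-definedness using that the transition maps in the inverse system act at the top as the surjections $H^*(BG/\Kt^{(n')}) \twoheadrightarrow H^*(BG/\Kt^{(n)})$ sending $\iota_{n'} \mapsto \iota_n$, with matching compatibility for inclusions $V \subseteq V'$ of representation spheres.

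For injectivity, I would argue that a morphism $\theta: B_{\Kt}(V,n) \to X$ is determined by $\theta_{G/\Kt}(\iota_n)$. Cyclicity at the top pins down $\theta_{G/\Kt}$. For $\Lt$ cotoral in $\Kt$, I would invoke the description of the horizontal structure maps in $B_{\Kt}(V,n)$ as relative residues (Subsection \ref{subsec:atK}), combined with the Gorenstein-duality surjection of Lemma \ref{lem:GorD}: the residue $\cEi_{K/L} H^*(B\Kt/\Lt) \twoheadrightarrow H_*(B\Kt/\Lt)$ (up to shift) is surjective, so the image of the horizontal map generates $B_{\Kt}(V,n)(G/\Lt)$ as a $\cOcFL$-module. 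The compatibility $\theta_{G/\Lt} \circ h_B = h_X \circ (\cEi \tensor \theta_{G/\Kt})$ then determines $\theta_{G/\Lt}$ on this generating set, hence completely.

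For surjectivity, given $v \in V_X(G/\Kt)$, torsion of $V_X(G/\Kt)$ over $\cOcFK$ yields $n$ with $\fm_{G/\Kt}^{[n]} \cdot v = 0$, so that $\iota_n \mapsto v$ defines $\theta_{G/\Kt}$. To extend $\theta$ downward to each $\Lt$ cotoral in $\Kt$, I would use the structure maps $h_X^{\Kt,\Lt}$ of $X$: once $V$ is chosen large enough to absorb the Euler-class denominators arising from the lower strata of $X$, the element obtained by applying $h_X^{\Kt,\Lt}$ to the appropriate tensor built from $v$ defines $\theta_{G/\Lt}$ on the image of the residue, and this extends to all of $B_{\Kt}(V,n)(G/\Lt)$ by $\cOcFL$-linearity. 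The main obstacle is coherence at all cotoral $\Lt$ simultaneously: I must verify, using local finiteness of the structure maps of $X$ and the torsion of each $V_X(G/\Lt)$, that a single pair $(V,n)$ suffices for all downward extensions of a given $v$, and that the result genuinely assembles into a morphism in $\cAt(G)$. The finite injective dimension bound (Proposition \ref{prop:id}) controls the depth of this descent, so the process terminates in finitely many steps.
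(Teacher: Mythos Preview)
Your outline tracks the paper's proof: define $e$ by $[\theta]\mapsto\theta_{G/\Kt}(\iota_n)$, establish injectivity from surjectivity of the structure maps of $B_{\Kt}(V,n)$, and establish surjectivity by choosing $n$ from the torsion condition and then $V$ large enough to extend downward. For injectivity the paper checks surjectivity of the structure maps by reducing to the case where $\Kt/\Lt$ is a circle and using the explicit map $S^{\infty z}\sm DE\Kt/\Lt_+ \lra \Sigma S(\infty z)_+$ rather than Gorenstein duality, but your route via Lemma~\ref{lem:GorD} is equally valid.

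There is a misstep in your surjectivity argument. The appeal to Proposition~\ref{prop:id} is misplaced: the injective-dimension bound governs resolutions in $\cAt(G)$ and has nothing to do with the descent here. The number of levels below $\Kt$ is $\dim K\leq r$, finite simply because $G$ is a torus; no homological input is required for that. More to the point, the paper does not attempt a multi-level inductive descent at all. Having fixed $n$ from the torsion of $x$, it looks only one step down: by the local finiteness built into the definition of $\cAt(G)$, the element $1\tensor x$ has nonzero image under $h_X$ at only finitely many connected codimension-$(c{+}1)$ subgroups $L$; for each such $L$ one chooses $V_L$ with $e(V_L)$ annihilating that image, and sets $V=\bigoplus_L V_L$. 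This already produces the required map $B_{\Kt}(V,n)\lra X$. You correctly name local finiteness and torsion as the relevant ingredients --- use them directly at codimension $c{+}1$ and drop the injective-dimension argument.
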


\begin{proof}
We suppose $X$ is an object of $\cAt(G)$ with torsion module $V
(G/\Kt)$ at $G/\Kt$. To define a map 
$$e:\colim_{V^K=0,n} \Hom_{\cAt(G)} (B_{\Kt}(V,n) ,  X) \lra V(G/\Kt)$$
we pick a representative $\theta  : B_{\Kt}(V,n) \lra  X$ of an
element of the domain. First
consider the  $G/\Kt$ level, and note that since $V^{\Kt}=0$, there is
no dependence on $V$.  We  take 
$e([\theta ])=\theta (G/\Kt)(\iota)$ where $\iota\in
 H^0(BG/\Kt)/\fm_{\Kt}^{[n]}$ represents  the unit. Since the maps in the
inverse sytem as $n$ varies are the identity in this
degree, this does not depend on
the choice of representative. Accordingly, the map $e$ is well
defined. 

To see $e$ is injective we note that the structure maps of $B_{\Kt}(V,n)$ are
surjective. First of all, if  $V=0$ the maps 
$B_{\Kt}(0, n)(G/\Kt) \lra B_{\Kt}(0, n)(G/\Lt)$ are surjective. 
We may as well suppose $\Kt/\Lt$ is a circle, since the general case
is a composite of such cases. Here it is easy to check that (writing $z$
for the natural representation of $\Kt/\Lt$), 
$$S^{\infty z}\sm DE\Kt/\Lt_+ \lra \Sigma S(\infty z)_+ \sm 
DE\Kt/\Lt_+ \simeq \Sigma S(\infty z)_+$$
is surjective and the general case is obtained by tensoring up. 
Now  as $V$ varies we use the fact that $e(V)$ is an isomorphism in the
domain and (since $V^K=0$) it is a monomorphism in the codomain.  

Finally, because $V (G/\Kt)$ is torsion, for any element $x$ of degree
$i$ in $V (G/\Kt)$ we can find an $n$ so that $(x_1x_2\ldots x_s)^nx=0$. 
There is therefore a map 
$$\Sigma^iH^*(BG/\Kt^{(n)})\cong \Sigma^iH^*(BG/\Kt)/\fm_{\Kt}^{[n]}
\lra V(G/\Kt)$$
with $\iota$ mapping to $x$. Next, if $\Kt$ is of codimension $c$, by
definition of $\cAt(G)$, the element $1\tensor x$ at level $G/\Kt$ only has non-zero
image at $G/L$ for finitely many connected codimension $c+1$ subgroups
$L$. For each of these we find $V_L$ so that $q_L(1\tensor x)$
is annihilated by $e(V_L)$, and take $V=\bigoplus_L V_L$. This shows
that $1\tensor x$ is in the image of a map $B_{\Kt}(V,n)\lra X$, and
so $e$ is surjective. 
\end{proof}

Having ind-corepresented evaluation at $G/\Kt$ we should describe how
the structure maps are ind-corepresented. Indeed,  if $\Lt$ is cotoral
in $\Kt$, we have maps
$$
\xymatrix{
\cEi_{\Kt/\Lt}H^*(BG/\Lt)\tensor_{H^*(BG/\Kt)}V(G/\Kt) \ar@{=}[d]\rto&
 V(G/\Lt)\ar@{=}[d]\\
\colim_{V^K=0,m}\cEi_{\Kt/\Lt}H^*(BG/\Lt)\tensor_{H^*(BG/\Kt)}\Hom(B_{\Kt}(V,m), X) \rto&
 \colim_{W^L=0,n}\Hom (B_{\Lt}(W,n), X)
}$$
In other words, given $x\in H^*(BG/\Lt)$, a representation $U$ of
$G/\Lt$ with $U^K=0$ and a representative map $\theta : B_{\Kt}(V,
m)\lra X$ we need a representative of the colimit in the
codomain. Since all maps are linear in $H^*(BG/\Lt)$, it suffices to
treat the case $x=1$. 

\begin{lemma}
Noting that $W^L=0$ implies, $W^K=0$, 
The structure map is ind-corepresented by precomposing $\theta$ with 
$$B_{\Lt}(W, n)\lra B_{\Kt}(W,n). $$
Mutliplication by $e(U)$ is given by $B_{\Kt}(V, m)\lra
B_{\Kt}(U\oplus V, m)$, so division by $e(U)$ is effected by shifting
filtration by $U$.
\end{lemma}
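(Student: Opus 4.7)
The plan is to verify the two assertions independently; together they suffice because by $\cOcFL$-linearity the horizontal structure map is determined by its values on $x=1$ and on elements of $\cE_{\Kt/\Lt}$ (together with their formal inverses in the localization), so the first part handles the case $x=1$ and the second part handles multiplication and division by Euler classes.

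For the first assertion, I would construct the comparison map $B_{\Lt}(W,n) \lra B_{\Kt}(W,n)$ geometrically. Since $\Lt \subseteq \Kt$, the universal free $(G/\Kt)$-space $EG/\Kt$ is automatically a free $(G/\Lt)$-space via the quotient $G/\Lt \lra G/\Kt$, so universality of $EG/\Lt$ as a free $(G/\Lt)$-space yields a $(G/\Lt)$-map $EG/\Lt^{(n)} \lra EG/\Kt^{(n)}$ compatible with the chosen filtrations. Spanier--Whitehead dualizing and smashing with $S^{-W}$ (permissible since $W^L=0$ implies $W^K=0$) gives a map of spectra $S^{-W} \sm DEG/\Kt^{(n)}_+ \lra S^{-W} \sm DEG/\Lt^{(n)}_+$, whose image under $\piAts$ is the required comparison. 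Precomposing $\theta : B_{\Kt}(W,n) \lra X$ with it yields a map $B_{\Lt}(W,n) \lra X$, and to check commutativity of the corepresentation square I would track the evaluation unit $\iota_\Lt \in H^0(BG/\Lt^{(n)})$: at level $G/\Lt$ the comparison sends $\iota_\Lt$ to $\iota_\Kt \tensor [\Kt/\Lt] \in H^*(BG/\Kt^{(n)}) \tensor H_*(B\Kt/\Lt) = B_{\Kt}(W,n)(G/\Lt)$, and this is precisely the image of $1 \tensor \iota_\Kt$ under the horizontal structure map of $B_{\Kt}(W,n)$, so applying $\theta(G/\Lt)$ recovers $h^{\Kt}_{\Lt}(1 \tensor \theta(G/\Kt)(\iota_\Kt))$, as required.

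For the second assertion, the map $B_{\Kt}(V,m) \lra B_{\Kt}(U \oplus V, m)$ is the image under $\piAts$ of the spectrum map $S^{-U \oplus V} \sm DEG/\Kt^{(m)}_+ \lra S^{-V} \sm DEG/\Kt^{(m)}_+$ obtained by smashing the identity of $DEG/\Kt^{(m)}_+$ with the unit $S^0 \lra S^U$. At each level $G/\Lt'$ with $\Lt'$ cotoral in $\Kt$, this acts on the sphere factor $S^{-V^{\Lt'}}$ by multiplication by $e(U^{\Lt'})=e(U)$, so precomposition multiplies the represented element by $e(U)$. Since the colimit over $V$ permits $V$ to absorb $U$, division by $e(U)$ is implemented by the reindexing $V \mapsto V \oplus U$ in the ind-system.

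The main obstacle is the unit-tracking calculation in the first part: namely showing that $\iota_\Lt$ maps to $\iota_\Kt \tensor [\Kt/\Lt]$ under the comparison at level $G/\Lt$. By writing a general cotoral inclusion $\Lt \subseteq \Kt$ as an iterated extension of circle quotients, one reduces to the case $\Kt/\Lt$ a circle, where the equivalence $D(EC_+^{(n)}) \simeq \Sigma^{1-nz} EC_+^{(n)}$ (for $C$ a circle) combined with the cyclic description of $H_*(BC^{(n)})$ recorded in Section \ref{sec:indcorep} reduces the check to a routine orientation computation.
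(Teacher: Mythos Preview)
Your construction of the comparison map $B_{\Lt}(W,n)\to B_{\Kt}(W,n)$ has a genuine gap. The claim that the inflated $(G/\Lt)$-action on $EG/\Kt$ is free is false: under the quotient $G/\Lt\to G/\Kt$ the subgroup $\Kt/\Lt$ acts trivially, so every point of $EG/\Kt$ has isotropy $\Kt/\Lt$ in $G/\Lt$. One can still produce a $(G/\Lt)$-map $EG/\Lt^{(n)}\to EG/\Kt^{(n)}$ (for instance, the coordinate projection in the explicit models), but dualizing this gives $DEG/\Kt^{(n)}_+\to DEG/\Lt^{(n)}_+$ and hence, after applying $\piAts$, a map $B_{\Kt}(W,n)\to B_{\Lt}(W,n)$---the \emph{wrong direction}. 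The same reversal afflicts your second paragraph: the spectrum map $S^{-U\oplus V}\sm DEG/\Kt^{(m)}_+\to S^{-V}\sm DEG/\Kt^{(m)}_+$ induces $B_{\Kt}(U\oplus V,m)\to B_{\Kt}(V,m)$, not the map $B_{\Kt}(V,m)\to B_{\Kt}(U\oplus V,m)$ the lemma asks for.

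The underlying point is that the structure map $h^{\Kt}_{\Lt}$ does not arise from a map of the representing spectra obtained by dualizing a map of classifying spaces; it arises from the \emph{connecting map} in the isotropy separation filtration. The paper's argument makes this explicit: it reduces to $\Kt/\Lt$ a circle, compares the components of $B_{\Lt}$ and $B_{\Kt}$ at a general level $G/\Mt$, and identifies the required map with the boundary
\[
S^{\infty z}\sm DE\Kt/\Lt_+\;\lra\;\Sigma S(\infty z)_+\sm DE\Kt/\Lt_+\;\simeq\;\Sigma S(\infty z)_+
\]
coming from the cofibre sequence $S(\infty z)_+\to S^0\to S^{\infty z}$. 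This is what converts the factor $H^*(B\Kt/\Lt)$ in the domain into $\Sigma^2 H_*(B\Kt/\Lt)$ in the codomain. Your unit-tracking idea and the reduction to the circle case are along the right lines, but you must use this connecting map rather than a dual of a classifying-space map.
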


\begin{proof}
We only need to give the answer at $G/\Mt$ where $\Mt$ is cotoral in
$\Lt$ since otherwise the codomain is zero. At this level we are
looking at
$$S^{-V^{\Mt}}\sm H^*(BG/\Lt^{(n)})\tensor
H_*(B\Lt/\Mt)\lra \Sigma^2
S^{-V^{\Mt}}\sm H^*(BG/\Kt^{(n)})\tensor H_*(B\Kt/\Mt).  $$
We may suppose $\Kt/\Lt$ is a circle. So the only effect is that
$H^*(B\Kt/\Lt)$ is removed from the cohomology in the domain and replaced by
$\Sigma^2 H_*(B\Kt/\Lt)$ in the homology of the codomain. As in the
proof of Proposition \ref{prop:indcorep}, writing $z$
for the natural representation of $\Kt/\Lt$ we note that it is 
$$S^{\infty z}\sm DE\Kt/\Lt_+ \lra \Sigma S(\infty z)_+ \sm 
DE\Kt/\Lt_+ \simeq \Sigma S(\infty z)_+$$
\end{proof}

\begin{remark}
This gives another approach to constructing injectives. 
Indeed, by definition, if $I$ is an injective $H^*(BG/\Lt)$-module,
then  $\at_L(I)$ has the property 
$$\Hom_{\cAt(G)}(X, \at_L (I))=\Hom_{H^*(BG/\Lt)}(V(G/\Lt), I)$$
so if $\Lt$ is cotoral in $\Kt$, we must have
\begin{multline*}
\at_L(I)(G/\Kt)=
\colim_{V^K=0, n}\Hom_{H^*(BG/\Lt)}(B_{\Kt}(V, n)(G/\Lt), I)\\
=\colim_{V^K=0, n}\Hom_{H^*(BG/\Lt)}(S^{-V^L}\tensor 
H^*(BG/\Kt^{(n)})\tensor H_*(B \Kt/\Lt), I). 
\end{multline*}
We now take the generating torsion injective $I=H_*(BG/\Lt)$ so that 
$\Hom_{H^*(BG/\Lt)}(\cdot , I)=\Hom_k (\cdot ,k)$.  As noted above 
$$H^*(BG/\Kt^{(n)})\tensor H_*(B \Kt/\Lt) 
\cong \Sigma^{s-na}H_*(BG/\Kt^{(n)})\tensor H_*(B \Kt/\Lt) $$
where $a=|x_1x_2\cdots x_s|$. Accordingly the value is 
$$S^{+V^L}\tensor \Sigma^{na-s}
H^*(BG/\Kt^{(n)}\times B\Kt/\Lt). $$
The colimit is 
$$\cEi_{K/L}H_*(BG/\Kt)\tensor
H^*(B\Kt/\Lt)=\cEi_{K/L}H^s_{\fm_{\Kt}}(H^*(BG/\Lt)).  $$
This agrees with the previous construction by Lemma
\ref{lem:injasloccoh}. 
Indeed, it was the  calculation by ind-corepresentability that 
first alerted the author to the connection between injectives and 
local cohomology, and hence led to the formulation of Lemma 
\ref{lem:injasloccoh}. 
\end{remark}

\section{The Adams spectral sequence}
\label{sec:AtASS}
The paper has been leading up to the construction of a method of
calculation based on the abelian torsion model. 

\begin{thm}
\label{thm:AtASS}
There is an Adams spectral sequence 
$$\ExtAt^{*,*} (\piAts (X), \piAts (Y))\Rightarrow [X,Y]^G_*. $$
This is a finite, strongly convergent spectral sequence. 
\end{thm}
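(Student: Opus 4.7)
I would follow the template of Theorem 6.6.2 in \cite{s1q} recapitulated in Section \ref{sec:semifreecircle}, using the standard Adams machinery based on realizing enough injectives and checking the $d$-invariant is an isomorphism on them. The plan has three pillars: (a) realize enough injectives of $\cAt(G)$ as the $\piAts$ of $G$-spectra; (b) verify that the $d$-invariant $d\colon [X,Y_I]^G_*\lra \HomAt(\piAts(X),\piAts(Y_I))$ is an isomorphism whenever $Y_I$ realizes an injective; (c) deduce finiteness and strong convergence.

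For (a), by Lemma \ref{lem:enoughinj} it suffices to realize $\at_L(H_*(BG/\Lt))$ for each subgroup $\Lt$, together with products (arbitrary products of realized injectives remain injective, and products commute with $\piAts$ up to the same $\Gamma_\Sigma\Gamma_{\cF/K}$ correction built into Corollary \ref{cor:prods}). Lemma \ref{lem:injasloccoh} identifies the components at $\cF/K$ with the $\Sigma^{-2\dim(G/K)}H^{\dim(G/K)}_{\fm_{G/\Kt}}(\cEi_{K/L}H^*(BG/\Lt))$, and this local-cohomological form is exactly what one obtains from $\piGs(\siftyV{K}\sm D\efp \sm E\cF/\Kt_+ \sm EG/\Lt_+)$-type constructions. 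I would construct the realizing spectrum explicitly from a diagram of smash products of $\etf$, $\siftyV{K}$, and $EG/\Lt_+$ spectra, imitating the identification $\at_1(k[c]^*)\simeq \piAts(\fibre(\etf[t]\lra \Sigma^{-1}EG_+))$ from Section \ref{sec:semifreecircle}; alternatively, one may invoke Brown representability on the homotopy category of rational $G$-spectra (as foreshadowed by the reference to ``Lemma \ref{lem:rinj}'').

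For (b), the ind-corepresentation theorem (Proposition \ref{prop:indcorep}) is the key tool. Writing an arbitrary $X$ component-wise via $V(G/\Kt)=\colim_{V^K=0,n}\HomAt(B_{\Kt}(V,n),X)$, it suffices to check that for each building block $B_{\Kt}(V,n)=\piAts(S^{-V}\sm DEG/\Kt_+^{(n)})$ the $d$-invariant
\[
[S^{-V}\sm DEG/\Kt_+^{(n)},\, Y_I]^G_* \lra \HomAt(B_{\Kt}(V,n),\, \piAts(Y_I))
\]
is an isomorphism. Passing to the colimit in $V$ and $n$ then yields the desired iso on evaluation at each $G/\Kt$, and compatibility with structure maps follows from the second lemma after Proposition \ref{prop:indcorep}. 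On the finite-type building blocks the verification reduces, via a connectivity argument along cofibre sequences $G/\Kt_+\lra S^0\lra S^{\alpha}$ as in \cite[Lemma 6.3]{gfreeq}, to the statement that maps between the relevant Borel constructions are detected on their localized Borel cohomology, which is a geometric input.

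For (c), an injective resolution $\piAts(Y)\lra \bbI^\bullet$ may be chosen of length $\leq 2r$ by Proposition \ref{prop:id}; realizing it as an Adams tower with $\piAts$ of the cofibres giving the $\bbI^i$ yields a finite spectral sequence with the stated $E_2$-page. Strong convergence follows because the tower has finite length and because $\piAts$ detects contractibility: a rational $G$-spectrum $X$ is trivial iff its geometric isotropy is empty iff $\supp(\piAts(X))=\emptyset$ iff $\piAts(X)=0$. The main obstacle will be step (b): the ind-corepresenting system is somewhat intricate, the local-finiteness conditions require care when passing between sums and products, and the connectivity argument has to be run uniformly over all cotoral flags rather than just $1\subset G$ as in the rank $1$ case.
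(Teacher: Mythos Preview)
Your overall architecture is right and matches the paper: realize enough injectives, build a finite Adams tower from a resolution of length $\leq 2r$ (Proposition~\ref{prop:id}), and use that $\piAts$ detects contractibility to get strong convergence. The difference lies in how steps (a) and (b) are handled. The paper does not separate them: it applies Brown representability directly to the cohomology theory $X\mapsto \HomAt(\piAts(X),\at_L(I))$ (exact because $I$ is injective) to produce $A_L(I)$ with $[X,A_L(I)]^G_*\cong \HomAt(\piAts(X),\at_L(I))$ \emph{by definition of the representing object}. Thus the $d$-invariant isomorphism is automatic, and the ind-corepresentation of Proposition~\ref{prop:indcorep} is used only for the much lighter task of computing $\piAts(A_L(I))$ by plugging in the $B_{\Kt}(V,n)$, confirming it equals $\at_L(I)$. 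Your proposed route for (b)---reducing to the finite building blocks $B_{\Kt}(V,n)$, running a connectivity argument over all cotoral flags, and then passing to the colimit---would also succeed, but it is substantially more work and the ``main obstacle'' you anticipate simply evaporates under the Brown-representability approach. What your route would buy is an explicit description of the representing spectra rather than an existence statement; the paper trades that for a shorter proof.
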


There is a standard method for constructing an Adams spectral
sequence: we first establish that enough
injectives are realizable and that the Adams spectral sequence applies
to them, and that the homology theory detects triviality.

\begin{lemma}
\label{lem:rinj}
For any injective $\cOcFL$-module $I$, there is a $G$-spectrum
$A_L(I)$ realizing $\at_L(I)$ in the sense that
$\piAts(A_L(I))=\at_L(I)$ and $\piAts$ gives an isomorphism
$$\piAts: [X, A_L(I)]^G_*\stackrel{\cong}\lra \HomAt(\piAts (X), \at_L(I)). $$
\end{lemma}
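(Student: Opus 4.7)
The plan is to construct $A_L(I)$ via Brown representability, identify its torsion homology as $\at_L(I)$ using ind-corepresentation, and then check that the representing isomorphism coincides with the $d$-invariant.

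First I would observe that the contravariant functor
$$F(X) := \Hom_{\cOcFL}(V_X(\cF/L), I) \;=\; \HomAt(\piAts(X), \at_L(I))$$
(the equality is the adjunction from Section \ref{sec:algAt}) is a cohomology theory on rational $G$-spectra. Indeed, $\piAts$ is a homology theory, so its $\cF/L$-component $X \mapsto V_X(\cF/L)$ is exact and preserves coproducts; since $I$ is injective, $\Hom_{\cOcFL}(-, I)$ is exact and turns sums into products. Brown representability for rational $G$-spectra then produces $A_L(I)$ with a natural isomorphism
$$\phi_X : [X, A_L(I)]^G_* \stackrel{\cong}\lra \HomAt(\piAts(X), \at_L(I)).$$

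Next I would compute $\piAts(A_L(I))$ via the ind-corepresentation of evaluation (Proposition \ref{prop:indcorep}). For each subgroup $\Kt$,
$$V_{A_L(I)}(G/\Kt) \;=\; \colim_{V^K=0,\, n} [B_{\Kt}(V,n), A_L(I)]^G_* \;=\; \colim_{V^K=0,\, n} \Hom_{\cOcFL}\bigl(V_{B_{\Kt}(V,n)}(\cF/L),\, I\bigr).$$
If $L \not\subseteq K$ this vanishes by Proposition \ref{prop:indcorep}. If $L \subseteq K$, the explicit formula for $V_{B_{\Kt}(V,n)}(\cF/L)$, combined with the Gorenstein-duality/local-cohomology identification of Lemma \ref{lem:injasloccoh} (carried out for the generating injectives in the final remark of Section \ref{sec:indcorep}), rewrites the colimit as $\Gamma_\Sigma \Gamma_{\cF/K} \Hom_{\cOcFL}(\cEi_{K/L}\cOcFL, I)$, which is $\at_L(I)(G/\Kt)$ by Definition \ref{defn:atL}. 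The companion lemma of Section \ref{sec:indcorep} identifying structure maps with $B_{\Lt}(W,n) \lra B_{\Kt}(W,n)$ takes care of the horizontal and vertical maps, giving $\piAts(A_L(I)) \cong \at_L(I)$.

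Finally, both $\phi_X$ and the $d$-invariant are natural transformations from $[-, A_L(I)]^G_*$ to $\HomAt(\piAts(-), \at_L(I))$. Applying each to $\id_{A_L(I)}$ yields the identity of $\at_L(I)$: for $\phi$ this is the universal element of the Brown representation (transported through the adjunction), and for $d$ it is tautological. Since the $\at_L$-adjunction makes a map into $\at_L(I)$ determined by its restriction to level $\cF/L$, the two transformations agree on the universal element, and by naturality they coincide throughout. The main obstacle is Step 2: one must confirm that the ind-corepresenting colimit of Hom-groups genuinely reproduces both the torsion condition $\Gamma_{\cF/K}$ and the "maps into sums not products" condition $\Gamma_\Sigma$ encoded in Definition \ref{defn:atL}. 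The former comes from finite-type cells in $B_{\Kt}(V,n)$, while the latter reflects that at each stage $B_{\Kt}(V,n)$ is supported on only finitely many subgroups of lower dimension; handling this carefully for arbitrary (non-Artinian) injective $I$, where $\Hom$-modules and local cohomology differ, is the delicate point.
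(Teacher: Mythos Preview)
Your proposal is correct and follows the same overall strategy as the paper: construct $A_L(I)$ by Brown representability, then identify $\piAts(A_L(I))$ using ind-corepresentation. There is, however, a worthwhile difference in how Step~2 is executed, and it bears directly on the ``main obstacle'' you flag at the end.

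You compute the colimit $\colim_{V,n}\Hom_{\cOcFL}(V_{B_{\Kt}(V,n)}(\cF/L),I)$ explicitly, appealing to the closing Remark of Section~\ref{sec:indcorep} and to Lemma~\ref{lem:injasloccoh}; this is where you (rightly) worry about recovering $\Gamma_{\cF/K}$ and $\Gamma_\Sigma$ for a general torsion injective $I$, since that remark is worked out only for $I=H_*(BG/\Lt)$. The paper's route avoids this entirely by applying Proposition~\ref{prop:indcorep} twice, once on each side of the Brown isomorphism. Writing $\mathbb{B}_{\Kt}(V,n)=S^{-V}\sm DEG/\Kt^{(n)}_+$ for the realizing spectrum, the topological ind-corepresentation (the displayed equation just before the definition of $B_{\Kt}(V,n)$) gives
\[
\piAts(A_L(I))(G/\Kt)=\colim_{V,n}[\mathbb{B}_{\Kt}(V,n),A_L(I)]^G_*
\stackrel{\text{Brown}}{=}\colim_{V,n}\HomAt(B_{\Kt}(V,n),\at_L(I)),
\]
while Proposition~\ref{prop:indcorep} applied to the \emph{algebraic} object $\at_L(I)\in\cAt(G)$ evaluates the right-hand colimit as $\at_L(I)(G/\Kt)$ directly. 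No explicit identification of the colimit with $\Gamma_\Sigma\Gamma_{\cF/K}\Hom_{\cOcFL}(\cEi_{K/L}\cOcFL,I)$ is needed, because all of that is already packaged into the proof of Proposition~\ref{prop:indcorep} (which works for an arbitrary object of $\cAt(G)$). Your Step~3, checking that the Brown isomorphism agrees with the $d$-invariant, is a careful addition that the paper leaves implicit; with the identification of $\piAts(A_L(I))$ set up via $\phi$ as above, the Yoneda argument you sketch goes through.
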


\begin{proof}
For an injective, torsion $\cOcFL$-module $I$, the functor 
$$X\longmapsto A_L(I)_G^*(X) =\HomAt(\piAts (X), \at_L(I)) 
=\Hom_{\cOcFH}(V_X(\cF /L), I)$$
is exact because $I$ is injective and is therefore a cohomology 
theory on $G$-spectra, so by Brown Representability there is a $G$-spectrum $A_L(I)$ so that 
$$[X, A_L(I)]^G_*=\HomAt(\piAts (X), \at_L(I)). $$
The isomorphism  $\piAts (A_L(I))=\at_L(I)$ follows from Proposition \ref{prop:indcorep}.
\end{proof}

\begin{lemma}
\label{lem:piAtsfaithful}
If $\piAts(X)=0$ then $X\simeq *$. 
\end{lemma}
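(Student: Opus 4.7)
The plan is to use the rational isotropy separation filtration to split the problem into contributions from each individual subgroup $\Ht$, and then to argue that each contribution $E\lr{\Ht}\sm X$ is contractible using the corresponding vanishing component of $\piAts(X)$.

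First I would invoke the filtration $* = E(\cF_{\leq -1})_+ \to \cdots \to E(\cF_{\leq r})_+ \simeq S^0$ from Section~\ref{sec:piAt}, with rational splitting $E\lr{s} \simeq \bigvee_{\dim \Ht = s} E\lr{\Ht}$ on its cofibres. Smashing with $X$ turns this into a finite filtration of $X$; a routine induction on filtration stages then reduces the statement to proving that $E\lr{\Ht}\sm X \simeq *$ for each subgroup $\Ht$ of $G$.

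Next I would fix such an $\Ht$ and set $Z = E\lr{\Ht}\sm X$. Because $\Phi^{\Ht'}E\lr{\Ht} = S^0$ only when $\Ht' = \Ht$, the geometric isotropy of $Z$ is contained in $\{\Ht\}$, so $\Phi^{\Ht} Z$ is a $G/\Ht$-spectrum with geometric isotropy $\{e\}$, i.e.\ a free rational $G/\Ht$-spectrum. The definition of $\piAts$ in Section~\ref{sec:piAt} identifies
$$V_X(G/\Ht) = \piGs\bigl(\Sigma^{\dim(G/H)} E\lr{\Ht}\sm X\bigr) = \Sigma^{2\dim(G/H)} H^{G/\Ht}_*(\Phi^{\Ht} X),$$
and since $\Phi^{\Ht} E\lr{\Ht} \simeq EG/\Ht_+$ rationally, this agrees with $H^{G/\Ht}_*(\Phi^{\Ht} Z)$. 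The hypothesis $\piAts(X) = 0$ therefore forces $H^{G/\Ht}_*(\Phi^{\Ht} Z) = 0$.

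The main technical input is that a free rational $G/\Ht$-spectrum $Z'$ is detected by its Borel-construction homology: since $Z' \simeq EG/\Ht_+ \sm Z'$ rationally, its equivariant homotopy groups agree with its Borel homology, and so vanishing Borel homology forces $Z' \simeq *$. Applied to $\Phi^{\Ht} Z$, this yields $\Phi^{\Ht} Z \simeq *$, and since $\Ht$ was the only possible point of geometric isotropy for $Z$, we conclude $Z \simeq *$, completing the argument.
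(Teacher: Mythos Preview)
Your argument is correct, but it is considerably more elaborate than the paper's. The paper dispatches the lemma in two lines: by definition $V_X(G/\Kt)=\Sigma^{2\dim(G/\Kt)}H^{G/\Kt}_*(\Phi^{\Kt}X)$, so $\piAts(X)=0$ forces $H^{G/\Kt}_*(\Phi^{\Kt}X)=0$ for every $\Kt$; hence the geometric isotropy of $X$ is empty (this is the remark $\cI_G(X)=\supp(\piAts(X))$ at the end of Section~\ref{sec:piAt}), and the Geometric Fixed Point Whitehead Theorem gives $X\simeq *$ immediately.

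Your route via the isotropy filtration is a valid alternative, but note that it does not actually avoid the Geometric Fixed Point Whitehead Theorem: your final sentence (``since $\Ht$ was the only possible point of geometric isotropy for $Z$, we conclude $Z\simeq *$'') is precisely that theorem applied to each $Z=E\lr{\Ht}\sm X$, so you end up invoking the same black box once per subgroup rather than once globally. If you want the filtration argument to buy something, replace that last step by the statement that $\Phi^{\Ht}$ restricts to an equivalence between $G$-spectra concentrated at $\Ht$ and free rational $G/\Ht$-spectra; then $\Phi^{\Ht}Z\simeq *$ gives $Z\simeq *$ directly, and your proof becomes a genuinely self-contained unpacking of the rational GFP Whitehead Theorem rather than a detour that still leans on it.
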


\begin{proof}
If $\piAts (X)=0$ then $H^*_{G/\Kt}(\Phi^{\Kt}X)=0$ for all subgroups
$\Kt$. The geometric isotropy of $X$ is therefore empty and $X$ is
contractible by the Geometric Fixed Point Whitehead Theorem.
\end{proof}

\begin{proof}[of Theorem \ref{thm:AtASS}]
As usual we need only show that enough injectives are realizable,
that  the spectral sequence is correct for maps into these spectra and
that the spectral sequence is convergent. 

In more detail, we take an injective resolution of $\piAts (Y)$: 
$$0\lra \piAts(Y)\lra I_0\lra I_1\lra \cdots \lra I_n\lra 0. $$
This is finite by Proposition \ref{prop:id}, and by Lemma \ref{lem:enoughinj} we may assume 
each $I_s$ is a sum of injectives $\at_K(I)$ for subgroups $K$
and injective $\cOcFK$-modules $I=H_*(BG/\Kt)$ where $\Kt$ has
identity component $K$.

By Lemma \ref{lem:rinj} this is realizable by a tower
$$\xymatrix{
Y\ar@{=}[r]&Y_0\dto &Y_1\lto\dto &Y_2\lto\dto &\cdots\lto  & Y_n\lto\dto &Y_{n+1}\lto\\
                     &\bbI_0&\Sigma^{-1}\bbI_1&\Sigma^{-2}\bbI_2&&\Sigma^{-n}\bbI_n&
}$$
which is built inductively, starting with $Y\lra \bbI_0$ realizing
$\piAts(Y)\lra I_0$ and taking $Y_1$ to be the fibre. Once $Y_s$ has
been defined as the fibre of $Y_{s-1}\lra \Sigma^{-s+1}\bbI$, we see
that $\piAts(Y_s)=\Sigma^{-s}(\im (I_{s-1}\lra
I_s))$. In particular, $\piAts(Y_{n+1})=0$ and $Y_{n+1}\simeq * $ by Lemma
\ref{lem:piAtsfaithful}.

We obtain the spectral sequence by applying $[X, \cdot]^G_*$ to the
tower. By Lemma \ref{lem:rinj} the $E_1$ term is $\HomAt(\piAts
(X), I_\bullet)$ and therefore the $E_2$-term is as stated. Strong
convergence is clear because the filtration is finite. 
\end{proof}


\begin{thebibliography}{1}
\bibitem{Torsion1}
S.~Balchin, J.P.C. Greenlees, L.~Pol, and J.~Williamson.
\newblock ``Torsion models: the one step case.''
\newblock {\em AGT (to appear), 35pp}, arXiv: 2011.10413
\bibitem{s1q}
J.~P.~C. Greenlees.
\newblock ``Rational {$S\sp 1$}-equivariant stable homotopy theory.''
\newblock {\em Mem. Amer. Math. Soc.}, 138(661):xii+289, 1999.
\bibitem{tnq1} J.P.C.Greenlees 
      ``Rational torus-equivariant stable homotopy I:
    calculating groups of stable maps.''
    JPAA {\bf 212} (2008) 72-98
    (http://dx.doi.org/10.1016/j.jpaa.2007.05.010), arXiv:0705.2686
\bibitem{tnq2} J.P.C.Greenlees 
      ``Rational torus-equivariant stable homotopy II:
 the algebra of localization and inflation.''
 JPAA {\bf 216} (2012) 2141-2158, arXiv:1108.4868
\bibitem{tnq3} J.P.C.Greenlees 
``Rational torus-equivariant stable homotopy III: comparison of
models.''
JPAA {\bf 220} (2016) 3573-3609, arXiv:1410.5464
\bibitem{tnq4}
J.P.C.Greenlees 
``Rational torus-equivariant stable homotopy IV: thick subcategories and the Balmer spectrum for finite 
spectra.'' 
Preprint, 26pp arXiv:1612.01741
\bibitem{spcgq} J.P.C.Greenlees 
``The Balmer spectrum for rational equivariant cohomology theories'' 
JPAA {\bf 223} (2019) 2845-2871 arXiv: 1706.07868 
\bibitem{gfreeq}
J.~P.~C. Greenlees and B.~Shipley.
\newblock An algebraic model for free rational {$G$}-spectra for connected
  compact {L}ie groups {$G$}.
\newblock {\em Math. Z.}, 269(1-2):373--400, 2011.

\end{thebibliography}
\end{document}